\definecolor{vegasgold}{rgb}{0.77, 0.7, 0.35}
\definecolor{darkgoldenrod}{rgb}{0.72, 0.53, 0.04}
\definecolor{gold(metallic)}{rgb}{0.83, 0.69, 0.22}
\DeclareFontFamily{U}{wncy}{}
\DeclareFontShape{U}{wncy}{m}{n}{<->wncyr10}{}
\DeclareSymbolFont{mcy}{U}{wncy}{m}{n}
\DeclareMathSymbol{\Sh}{\mathord}{mcy}{"58}
\newtheorem{theorem}{Theorem}[section]
\newtheorem{lemma}[theorem]{Lemma}
\newtheorem{ass}[theorem]{Assumption}
\newtheorem*{theorem*}{Theorem}
\newtheorem*{ass*}{Assumption}
\newtheorem{definition}[theorem]{Definition}
\newtheorem{corollary}[theorem]{Corollary}
\newtheorem{remark}[theorem]{Remark}
\newtheorem{conjecture}[theorem]{Conjecture}
\newtheorem{proposition}[theorem]{Proposition}
\newtheorem{question}[theorem]{Question}
\newcommand{\cF}{\mathcal{F}}
\newcommand{\cK}{\mathcal{K}}
\newcommand{\cD}{\mathcal{D}}
\newcommand{\cU}{\mathcal{U}}
\newcommand{\LcO}{\Lambda_{\cO}}
\newcommand{\Z}{\mathbb{Z}}
\newcommand{\p}{\mathfrak{p}}
\newcommand{\Q}{\mathbb{Q}}
\newcommand{\cO}{\mathcal{O}}
\newcommand{\Hom}{\mathrm{Hom}}
\newcommand{\op}[1]{\operatorname{#1}}
\newcommand\mtx[4] { \left( {\begin{array}{cc}
 #1 & #2 \\
 #3 & #4 \\
 \end{array} } \right)}
\numberwithin{equation}{section}
\begin{document}

\title[Iwasawa invariants of Artin representations]{On the Iwasawa invariants of Artin representations}
\author[A.~Karnataki]{Aditya Karnataki\, \orcidlink{0000-0002-0849-5672}}
\address[Karnataki]{Chennai Mathematical Institute, H1, SIPCOT IT Park, Kelambakkam, Siruseri, Tamil Nadu 603103, India}
\email{adityak@cmi.ac.in}

\author[A.~Ray]{Anwesh Ray\, \orcidlink{0000-0001-6946-1559}}
\address[Ray]{Chennai Mathematical Institute, H1, SIPCOT IT Park, Kelambakkam, Siruseri, Tamil Nadu 603103, India}
\email{anwesh@cmi.ac.in}

\keywords{Iwasawa theory, Artin representations, Euler characteristic, distribution questions}
\subjclass[2020]{11R23, 11R34, 11F80 (Primary)}

\maketitle

\begin{abstract}
 We study Iwasawa invariants associated to Selmer groups of Artin representations, and criteria for the vanishing of the associated algebraic Iwasawa invariants. The conditions obtained can be used to study natural distribution questions in this context.
\end{abstract}

\section{Introduction}
\par Let $p$ be an odd prime number and $\Z_p$ denote the ring of $p$-adic integers. Let $K$ be a number field and fix an algebraic closure $\bar{K}$ of $K$. A $\Z_p$-extension of $K$ is an infinite Galois extension $K_\infty/K$ such that the Galois group $\op{Gal}(K_\infty/K)$ is isomorphic to $\Z_p$ as a topological group. Let $K_n\subset K_\infty$ be the extension of $K$ for which $\op{Gal}(K_n/K)\simeq \Z/p^n\Z$, and $h_k(K_n)$ denote the cardinality of the $p$-primary part of the class number of $K_n$. Writing $h_p(K_n)=p^{e_n}$, Iwasawa \cite{iwasawa1959gamma} showed that for all large enough values of $n$,
\[e_n=p^n \mu+n \lambda+\nu,\] where $\mu, \lambda\in \Z_{\geq 0}$ and $\nu \in \Z$ are invariants that depend on the extension $K_\infty/K$ and not on $n$. The results of Iwasawa motivated Mazur \cite{mazur1972rational} to study the growth properties of the $p$-primary Selmer groups of $p$-ordinary abelian varieties in $\Z_p$-extensions. Mazur's results were later extended to very general classes of ordinary Galois representations by Greenberg, cf. \cite{greenberg1989iwasawa}. \par Another class of representations that are natural to consider are Artin representations. Let $K/\Q$ be a finite and totally imaginary Galois extension with Galois group $\Delta:=\op{Gal}(K/\Q)$, and let $\rho:\Delta \rightarrow \op{GL}_d(\bar{\Q})$ be an irreducible Artin representation. Let $p$ be an odd prime number and let $\sigma_p: \bar{\Q}\hookrightarrow \bar{\Q}_p$ be an embedding and via $\sigma_p$ we view $\rho$ as a representation $\rho: \Delta\rightarrow \op{GL}_d(\bar{\Q}_p)$. 
Let $v$ denote an archimedian prime of $K$, and set $K_v$ to denote the $v$-adic completion of $K$. We shall identify $\op{Gal}(K_v/\mathbb{R})$ with a subgroup $\Delta_v$ of $\Delta$. Set $d^+$ to denote the multiplicity of the trivial character in $\rho_{|\Delta_v}$ and observe that this number is well defined and independent of the choice of the archimedian prime $v$. When $K$ is totally real, we find that $d^+=d$. Let $\p$ be a prime of $K$ that lies above $p$, and let $\Delta_{\p}\subset \Delta$ be the decomposition group at $\p$. Following Greenberg and Vatsal \cite{greenbergvatsalArtinrepns}, we make the following assumptions.
\begin{ass}\label{main ass}
    With respect to the notation above, assume that 
    \begin{enumerate}
        \item $p$ does not divide $[K:\Q]$, 
        \item $d^+=1$, 
        \item there exists a $1$-dimensional representation $\epsilon_{\p}$ of $\Delta_{\p}$ that occurs with multiplicity $1$ in $\rho_{|\Delta_{\p}}$. 
    \end{enumerate}
\end{ass}
 In the special case $d=2$ and $d^+=1$, such representations are expected to arise from Hecke eigenforms of weight $1$ on $\Gamma_1(N)$, where $N$ is the Artin conductor of $\rho$. The conjecture has been settled in various special cases, cf. \cite{buzzard2001icosahedral, taylor2003icosahedral} and references therein. The choice of the character $\epsilon_{\p}$ plays a role in the definition of the Selmer groups associated to $\rho$. Take $\mathcal{K}$ be the completion of $K$ at $\p$. There is a natural isomorphism $\op{Gal}(\mathcal{K}/\Q_p)\simeq \Delta_\p$, and set $\epsilon$ to denote the composite 
\[\op{Gal}(\mathcal{K}/\Q_p)\xrightarrow{\sim} \Delta_\p\xrightarrow{\epsilon_\p} \bar{\Q}^\times.\]Let $\chi$ denote the character associated to $\rho$ and $\Q(\chi)$ denote the field generated by the values of $\chi$. Let $\mathcal{F}$ be the field generated by $\Q_p$, the values of $\chi$ and the values of $\epsilon_{\p}$. We regard $\rho$ as a representation a vector space $V$ defined over $\cF$ and note that $\dim_{\cF}V=d$. Let $V^{\epsilon_\p}$ be the maximal $\cF$-subspace of $V$ on which $\Delta_\p$ acts by $\epsilon_\p$. By hypothesis, $\dim_\cF V^{\epsilon_\p}=1$. Let $\cO$ be the valuation ring in $\cF$ and $\varpi$ be its uniformizer. We assume that $\rho$ is irreducible, and therefore, there is a Galois stable $\cO$-lattice $T\subset V$. This lattice is uniquely determined up to scaling by a constant. We consider the divisible Galois module $D:=V/T$ and let $D^{\epsilon_\p}$ be the image of $V^{\epsilon_\p}$ in $D$. Let $\Q_{\infty}$ denote the cyclotomic $\Z_p$-extension of $\Q$. Let $\Q_n\subset \Q_\infty$ be the subextension such that $\op{Gal}(\Q_n/\Q)\simeq \Z/p^n\Z$. We set 
\[S_{\chi, \epsilon} (\Q_n):=\ker\left\{H^1(\Q_n, D)\longrightarrow \prod_{v\nmid p} H^1(\Q_{n, v}^{\op{nr}}, D)\times H^1(\Q_{n, \eta_p}^{\op{nr}}, D/D^{\epsilon_\p})\right\},\] where $\eta_p$ is the unique prime that lies above $p$.
\begin{theorem}[Greenberg and Vatsal]\label{GV finiteness}
    Suppose that the Assumption \ref{main ass} holds. Then, the Selmer group $S_{\chi, \epsilon}(\Q_n)$ is finite for all $n\in \Z_{\geq 0}$.
\end{theorem}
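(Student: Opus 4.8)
\noindent\emph{Proof proposal.} The plan is to show that $S_{\chi,\epsilon}(\Q_n)$ has vanishing $\cO$-corank. Let $\Sigma$ be the finite set of primes of $\Q$ dividing $p$, the archimedean place, and the Artin conductor of $\rho$; then $S_{\chi,\epsilon}(\Q_n)$ is a subgroup of $H^1(\Q_\Sigma/\Q_n,D)$, which is a cofinitely generated $\cO$-module, so finiteness is equivalent to $\corank_{\cO}S_{\chi,\epsilon}(\Q_n)=0$. Since $p\nmid[K:\Q]$ we have $K\cap\Q_n=\Q$, so $M_n:=K\Q_n$ is a CM field with $\op{Gal}(M_n/\Q_n)\cong\Delta$ of order prime to $p$; inflation--restriction then gives $H^i(\Q_n,-)\xrightarrow{\ \sim\ }H^i(M_n,-)^{\Delta}$ on the relevant $p$-primary modules, with all higher $\Delta$-cohomology vanishing, and over $M_n$ the representation $\rho$ becomes trivial, so that Kummer theory and class field theory become available.

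The key step is an application of global duality. Write $\mathcal{L}=(\mathcal{L}_v)_{v\in\Sigma}$ for the local conditions cutting out $S_{\chi,\epsilon}(\Q_n)$ inside $H^1(\Q_\Sigma/\Q_n,D)$, and $\mathcal{L}^{\perp}$ for the orthogonal-complement conditions on $H^1(\Q_\Sigma/\Q_n,T^{*}(1))$ under local Tate duality, where $T^{*}(1)$ is the Tate twist of the linear dual of $T$; the latter is a finitely generated $\cO$-module. The Poitou--Tate exact sequence (equivalently the Greenberg--Wiles formula) yields
\[
\corank_{\cO}S_{\chi,\epsilon}(\Q_n)-\corank_{\cO}S_{\mathcal{L}^{\perp}}(\Q_n,T^{*}(1))=\sum_{v\in\Sigma}\bigl(\corank_{\cO}\mathcal{L}_v-\corank_{\cO}H^0(\Q_{n,v},D)\bigr),
\]
where one uses that $H^0(\Q_n,D)$ and $H^0(\Q_n,T^{*}(1))$ are finite: $\rho$, respectively $\rho$ twisted by the cyclotomic character, has finite image while the cyclotomic character does not. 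For $v\nmid p\infty$ the unramified condition is self-dual and one checks the summand vanishes; for the archimedean places the total contribution is $-\sum_{v\mid\infty}\corank_{\cO}D^{c_v}=-p^{n}d^{+}$; and at the unique prime $\eta_p$ of $\Q_n$ above $p$, a local Euler-characteristic count gives $\corank_{\cO}\mathcal{L}_{\eta_p}-\corank_{\cO}H^0(\Q_{n,\eta_p},D)=p^{n}\dim_{\cF}V^{\epsilon_{\p}}$. By Assumption~\ref{main ass} we have $d^{+}=\dim_{\cF}V^{\epsilon_{\p}}=1$, so these contributions cancel and $\corank_{\cO}S_{\chi,\epsilon}(\Q_n)=\corank_{\cO}S_{\mathcal{L}^{\perp}}(\Q_n,T^{*}(1))$.

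It remains to prove that this ``dual'' Selmer group is finite; this is the heart of the argument. Unwinding over $M_n$, the restriction of $T^{*}(1)\otimes_{\cO}\cF$ to $G_{M_n}$ is a sum of $d$ copies of $\cF(1)$, and Kummer theory identifies $H^1(\Q_\Sigma/M_n,\cF(1))$ with $\cO_{M_n}[1/p]^{\times}\otimes_{\Z}\cF$ (the $p$-part of the ray class group being finite); passing to $\Delta$-isotypic components one obtains
\[
S_{\mathcal{L}^{\perp}}(\Q_n,T^{*}(1))\otimes_{\cO}\cF\;\cong\;\Hom_{\cF[\Delta]}\!\bigl(\rho,\ H^1_{\mathcal{L}^{\perp}}(M_n,\cF(1))\bigr),
\]
and the dual local condition at $p$ is precisely what removes the contribution of the $p$-adic valuations at the primes above $p$, so that $H^1_{\mathcal{L}^{\perp}}(M_n,\cF(1))$ is cut down to a subspace of $\cO_{M_n}^{\times}\otimes_{\Z}\cF$. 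When $d\geq 2$, Dirichlet's unit theorem together with the Galois-module structure of the units of the CM field $M_n$ shows that the $\rho$-isotypic component of $\cO_{M_n}^{\times}\otimes\cF$ is nonzero only if complex conjugation acts trivially on $\rho$, i.e.\ only if $d^{+}=d$, which contradicts $d^{+}=1<d$; hence the component vanishes. When $d=1$, $\rho$ is an even Dirichlet character and $M_n$ is abelian over $\Q$; here the dual condition at $p$ is the strict one, so $S_{\mathcal{L}^{\perp}}(\Q_n,T^{*}(1))\otimes\cF$ injects into the kernel of the $p$-adic regulator map of $M_n$, which vanishes by Brumer's theorem on Leopoldt's conjecture for abelian fields. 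In either case $S_{\mathcal{L}^{\perp}}(\Q_n,T^{*}(1))$ is finite, hence $\corank_{\cO}S_{\chi,\epsilon}(\Q_n)=0$ and $S_{\chi,\epsilon}(\Q_n)$ is finite.

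I expect the main obstacle to be the third step: dualizing the local condition $\mathcal{L}_{\eta_p}$, which is built from the line $D^{\epsilon_{\p}}$, correctly under local Tate duality, and verifying that $S_{\mathcal{L}^{\perp}}(\Q_n,T^{*}(1))$ is genuinely governed by the unit group of $M_n$ rather than by the larger group of $p$-units -- that is, that no ``trivial zero'' obstructs the vanishing. The Euler-characteristic bookkeeping in the second step is routine but requires care with the normalizations of the local and archimedean terms; the descent to $M_n$ in the first step is where $p\nmid[K:\Q]$ enters, while the hypotheses $d^{+}=1$ and $\dim_{\cF}V^{\epsilon_{\p}}=1$ are exactly what force both the cancellation in the second step and the vanishing in the third.
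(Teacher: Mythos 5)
There is a genuine gap in the final step, and it is fatal to the argument as written. You claim that for $d\geq 2$, ``the $\rho$-isotypic component of $\cO_{M_n}^{\times}\otimes\cF$ is nonzero only if complex conjugation acts trivially on $\rho$, i.e.\ only if $d^{+}=d$,'' invoking the Galois-module structure of units of a CM field. Two things go wrong here. First, $M_n$ is not CM under Assumption~\ref{main ass}: since $\rho$ is absolutely irreducible, if complex conjugation $c$ were central in $\Delta$ then $\rho(c)$ would be scalar and $d^{+}\in\{0,d\}$, contradicting $d^{+}=1<d$. (Indeed, for the paper's $S_3$ and $A_5$ examples $c$ is a non-central involution.) Second, and independently, the correct $G_n$-module structure of the units of the totally imaginary Galois field $M_n$, where $G_n:=\op{Gal}(M_n/\Q)\cong\Delta\times\Gamma/\Gamma_n$, is
\[
\cO_{M_n}^{\times}\otimes_{\Z}\Q\ \oplus\ \Q\ \cong\ \op{Ind}_{\langle c\rangle}^{G_n}\Q ,
\]
so by Frobenius reciprocity the multiplicity of $\rho$ in $\cO_{M_n}^{\times}\otimes\cF$ equals $d^{+}\cdot p^{n}=p^{n}>0$. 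Thus the $\rho$-isotypic subspace you need to vanish is in fact nonzero of $\cF$-dimension $d\,p^{n}$, and your argument for the finiteness of the dual Selmer group collapses.

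The underlying point is that finiteness here cannot follow from a purely representation-theoretic vanishing of the unit contribution; it genuinely requires a non-degeneracy statement for a $p$-adic regulator on the $\chi$-component, i.e.\ that the image of the global units in the semi-local units has full rank in the relevant eigenspace. This is exactly what is encoded in the paper's Proposition~\ref{prop4.8}, where the crucial input is that $\bigl(\bar{U}_{p,\cO}^{[\epsilon]}\bigr)^{\chi}$ has finite index in $\bigl(\cU_{p,\cO}^{[\epsilon]}\bigr)^{\chi}$. Greenberg--Vatsal's proof of \cite[Proposition~3.1]{greenbergvatsalArtinrepns} is not a Poitou--Tate reduction; they descend directly to $K\Q_n$, express $S_{\chi,\epsilon}(\Q_n)$ via class groups and (semi-)local unit data using inflation--restriction (this uses $p\nmid[K:\Q]$, as in your first step), and then invoke a $p$-adic transcendence result in the spirit of Brumer's theorem to get the required non-vanishing of the regulator. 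Your duality bookkeeping in the second step is reasonable in outline, but shifting the difficulty onto the dual Selmer group does not remove the need for that transcendence input; the third step, which was supposed to replace it with a free vanishing, is where the argument fails.
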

\begin{proof}
    The above result is \cite[Proposition 3.1]{greenbergvatsalArtinrepns}.
\end{proof}

\begin{definition}
    Define the Selmer group over $\Q_\infty$ to denote the direct limit with respect to restriction maps
    \[S_{\chi, \epsilon}(\Q_\infty):=\varinjlim_n S_{\chi, \epsilon} (\Q_n).\]
\end{definition}

Greenberg and Vatsal show that the Selmer group over $\Q_\infty$ is cofinitely generated and cotorsion over the Iwasawa algebra, which is a formal power series ring over $\cO$ in $1$-variable. Leveraging the results of Greenberg and Vatsal, we study the Euler characteristic formula associated to these Selmer groups and utilize it to study explicit conditions for the vanishing of Selmer group $S_{\chi, \epsilon}(\Q_\infty)$. For instance, we are able to prove that there is an explicit relationship between the vanishing of the Selmer group and the \emph{$p$-rationality} of $K$ (in the sense of \cite{movahhedi1990arithmetique}).
\begin{theorem}[Theorem \ref{cor 3.10}]
 Assume that  
     \begin{enumerate}[label=(\roman*)]
        \item the conditions of Assumption \ref{main ass} are satisfied.
        \item $H^0(\Delta_{\p}, D/D^{\epsilon_\p})=0$,
        \item $K$ is $p$-rational,
        \item $p$ does not divide the class number of $K$.
    \end{enumerate}
 Then, $S_{\chi, \epsilon}(\Q_\infty)=0$.
\end{theorem}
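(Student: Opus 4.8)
Write $\Gamma = \op{Gal}(\Q_\infty/\Q)$. The plan is to combine the Euler characteristic formula established above for $S_{\chi,\epsilon}$ with a control argument, the decisive point being the vanishing of the Selmer group over $\Q$.

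First I would check that all the ``global'' and ``local away from $p$'' contributions to the Euler characteristic are trivial. Since $p \nmid [K:\Q]$, the group $H^1(\Delta, T)$ vanishes, so $H^0(\Q, D) = D^{\Delta} = V^{\Delta}/T^{\Delta} = 0$ because $\rho$ is a nontrivial irreducible representation; the same applies to the Cartier dual $D^{*}$, which is a twist of $\rho^{*}$ by the infinite-order cyclotomic character. For the finite primes $v \nmid p$ in the support of $\rho$, the local Euler factors are automatically $1$ (one has $\#H^1_{\ur}(\Q_v, D) = \#H^0(\Q_v, D)$ for a divisible module), and hypothesis (ii) makes the factor at $p$ trivial: identifying $\op{Gal}(K_{\p}/\Q_p)$ with $\Delta_{\p}$ gives $H^0(\Q_p, D/D^{\epsilon_{\p}}) = H^0(\Delta_{\p}, D/D^{\epsilon_{\p}}) = 0$, which in turn kills $H^1_{\ur}(\Q_p, D/D^{\epsilon_{\p}})$. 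So once $S_{\chi,\epsilon}(\Q) = 0$ is known, the Euler characteristic formula gives $\chi(\Gamma, S_{\chi,\epsilon}(\Q_\infty)) = 1$, hence $\mu = \lambda = 0$ and $S_{\chi,\epsilon}(\Q_\infty)$ is finite; moreover the same vanishing (ii) makes the control homomorphism $S_{\chi,\epsilon}(\Q) \to S_{\chi,\epsilon}(\Q_\infty)^{\Gamma}$ an isomorphism (its kernel sits inside $H^1(\Gamma, D^{G_{\Q_\infty}}) = H^1(\Gamma, D^{\Delta}) = 0$, and its cokernel is governed by the trivial local term at $p$), so $S_{\chi,\epsilon}(\Q_\infty)^{\Gamma} = 0$, and then $S_{\chi,\epsilon}(\Q_\infty) = 0$ by Nakayama's lemma, since its Pontryagin dual is a finitely generated $\Lambda$-module $X$ with $X_{\Gamma} = (S_{\chi,\epsilon}(\Q_\infty)^{\Gamma})^{\vee} = 0$.

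The substance of the argument is thus $S_{\chi,\epsilon}(\Q) = 0$, and here is where (iii) and (iv) are used. Because $\rho$ factors through $\Delta$, the module $D$ is unramified as a $G_K$-module; inflation--restriction (again using $p \nmid [K:\Q]$) together with the unramified local conditions at the bad primes $v \nmid p$ then identify $S_{\chi,\epsilon}(\Q)$ with $\ker\bigl(\Hom_{\Z_p[\Delta]}(\X, D) \xrightarrow{\op{loc}_p} H^1(\Q_p, D)/L_p\bigr)$, where $\X$ is the Galois group of the maximal abelian pro-$p$ extension of $K$ unramified outside $p$ and $L_p$ is the local condition at $p$. Now $p$-rationality of $K$ says precisely that $\X$ is a free $\Z_p$-module (in particular Leopoldt holds for $(K,p)$), and $p \nmid h_K$ forces $\X$ to be topologically generated by the inertia subgroups at the primes above $p$. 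A class field theory computation expresses $\X \otimes \Q_p$ as the module of principal local units at the primes above $p$ --- which is $\Q_p[\Delta]$ by the normal basis theorem --- modulo the $S$-units of $K$, up to a copy of the trivial representation; plugging in hypothesis (2) ($d^{+} = 1$) one finds that the $\rho$-isotypic multiplicity of $\X \otimes \Q_p$ is exactly $d - d^{+} = d - 1$, so $\Hom_{\Z_p[\Delta]}(\X, D) \cong (\cF/\cO)^{d-1}$. Since $\X$ is generated by inertia at $p$ and $\Hom$ is $\Delta$-equivariant, $\op{loc}_p$ is injective on this group; and by (ii) the condition $L_p$ is exactly the $\epsilon_{\p}$-isotypic summand $H^1(\Q_p, D^{\epsilon_{\p}})$ of $H^1(\Q_p, D)$. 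Matching the explicit description of $\op{loc}_p\bigl(\Hom_{\Z_p[\Delta]}(\X, D)\bigr)$ --- the local classes whose $\Delta$-equivariant extension to the local units kills the global $S$-units --- against this summand, and invoking the multiplicity-one hypothesis (3) for $\epsilon_{\p}$ in $\rho_{|\Delta_{\p}}$ together with the non-degeneracy of the $p$-adic regulator of $K$ (a consequence of $p$-rationality and $p \nmid h_K$), one gets that the intersection is zero, i.e. $S_{\chi,\epsilon}(\Q) = 0$.

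The main obstacle is this final step. The reductions above, and the Galois-module computation of $\X$, are essentially bookkeeping; what is delicate is that each of the ``multiplicity one'' hypotheses does exactly one job in the closing dimension count --- $d^{+} = 1$ makes the $\rho$-part of $\X$ have corank precisely $d - 1$ because the global $S$-unit relations consume exactly the ``$+$''-part of $\rho$ and one trivial line; $p \nmid h_K$ and $p$-rationality make $\X$ visible after localization at $p$, via inertia generation and a unit $p$-adic regulator; and multiplicity one of $\epsilon_{\p}$ makes $D^{\epsilon_{\p}}$ a line too small to absorb $\op{loc}_p(\Hom_{\Z_p[\Delta]}(\X, D))$. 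Showing that these inputs force $\op{loc}_p(\Hom_{\Z_p[\Delta]}(\X, D)) \cap H^1(\Q_p, D^{\epsilon_{\p}})$ to be genuinely zero --- rather than merely of corank zero, which only gives finiteness and is already known --- is the heart of the proof.
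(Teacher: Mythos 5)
Your reduction of $S_{\chi,\epsilon}(\Q_\infty)=0$ to $S_{\chi,\epsilon}(\Q)=0$ via the Euler characteristic, the control theorem, and Nakayama is sound and is exactly what Corollary~\ref{cor 4.7} packages. But the argument you give for $S_{\chi,\epsilon}(\Q)=0$ has a genuine gap, which you yourself flag: you reduce to showing that $\op{loc}_p\bigl(\Hom_{\Z_p[\Delta]}(\X,D)\bigr)$ has trivial intersection with the local condition $L_p$ coming from $D^{\epsilon_\p}$, and you observe that this requires more than a corank count --- it requires the intersection to be \emph{genuinely zero}, not merely finite. You then gesture at ``non-degeneracy of the $p$-adic regulator'' and a multiplicity-one dimension count, but you do not actually supply the argument. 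Finiteness of $S_{\chi,\epsilon}(\Q)$ is already Theorem~\ref{GV finiteness}, so the corank counting you sketch cannot close the proof; the whole difficulty is concentrated at precisely the step you leave open.

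The paper closes this gap by a much more economical observation, and sidesteps the Poitou--Tate/regulator computation you attempt. Invoking Proposition~\ref{prop4.8}, one has a short exact sequence
\[
0\to H^1_{\op{nr}}(\Q, D)\to S_{\chi, \epsilon}(\Q)\to \Hom_{\cO[\Delta]}\Bigl(\bigl(\cU_{p, \cO}^{[\epsilon]}\bigr)^\chi\big/\bigl(\bar{U}_{p, \cO}^{[\epsilon]}\bigr)^\chi,\, D\Bigr)\to 0,
\]
together with the fact (part (1) of that proposition, due to Greenberg--Vatsal) that the unit quotient appearing on the right is a \emph{finite} $\cO$-module. Condition (iv) kills the left term by Lemma~\ref{lemma4.10}, exactly as you note. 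For the right term, one uses that $p\nmid h_K$ identifies $G_p^{\op{ab}}$ with $\cU_p/\bar{U}$, while $p$-rationality (condition (iii)) says $G_p^{\op{ab}}$ is $\Z_p$-torsion-free. Since $p\nmid[K:\Q]$, the $\chi$- and $\epsilon$-components are cut out by idempotents of $\cO[\Delta]$, so $\bigl(\cU_{p,\cO}^{[\epsilon]}\bigr)^\chi/\bigl(\bar{U}_{p,\cO}^{[\epsilon]}\bigr)^\chi$ is a direct summand of $(\cU_p/\bar{U})\otimes_{\Z_p}\cO$, hence $\cO$-torsion-free. Being simultaneously finite and torsion-free over $\cO$, it vanishes. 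Theorem~\ref{conditions GV} then gives $S_{\chi,\epsilon}(\Q_\infty)=0$. In short: you should invoke Proposition~\ref{prop4.8}(1) for finiteness of the unit quotient and then use the ``finite plus torsion-free implies zero'' observation, rather than trying to re-derive the exact structure of the localization map; this is what the hypotheses (iii) and (iv) are really doing, and it avoids the delicate intersection argument you left unresolved.
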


These criteria are illustrated in two special cases, namely that of $2$-dimensional irreducible Artin representations (cf. Theorem \ref{thm 4.12}) and $3$-dimensional icosahedral Artin representations of $A_5$ (cf. Theorem \ref{example thm 2}). The Theorem \ref{conditions GV} gives a more refined criterion that is equivalent to the vanishing of $S_{\chi, \epsilon}(\Q_\infty)$. The relationship between $p$-rationality and the Iwasawa invariants of number fields has been studied by Hajir and Maire, cf. \cite{hajirmaire}.  

\par For simplicity, we then specialize our discussion to $2$-dimensional Artin representation of dihedral type. Let $L$ be an imaginary quadratic field and $\zeta:\op{G}_L\rightarrow \bar{\Q}^\times$ is a character and $\rho=\op{Ind}_L^{\Q}\zeta:\op{G}_\Q\rightarrow \op{GL}_2(\bar{\Q})$ be the associated Artin representation. Set $\zeta'$ to be conjugate character defined by setting $\zeta'(x)=\zeta(c x c^{-1})$, where $c$ denotes the complex conjugation. Assume that $\zeta'=\zeta^{-1}$, thus $\rho$ is dihedral type. Let $S(\rho)$ be the set of primes $p$ such that 
\begin{enumerate}
    \item $p$ is odd and $p\nmid [K:\Q]$, 
    \item $p$ splits in $L$, $p\cO_L=\pi \pi^*$, and $\pi$ is inert in $K/L$.
\end{enumerate}
It is easy to see that if $\p|p$ is a prime of $K$, then, the conditions of Assumption \ref{main ass} are satisfied. Let $T(\rho)$ be the set of primes $p\in S(\rho)$ such that the Selmer group $S_{\chi, \epsilon}(\Q_\infty)$ vanishes for all of the primes $\p$ that lie above $p$. Set $T'(\rho):=S(\rho)\backslash T(\rho)$. In section \ref{s 4}, we apply our results to study the following natural question.
\begin{question}
    What can be said about the densities of the sets of primes in $T(\rho)$ and $T'(\rho)$?
\end{question}
A conjecture of Gras predicts that any number field $K$ is $p$-rational at all but finitely many primes $p$. This has conjectural implications to the vanishing of Selmer groups for all but finitely many primes for a compatible family of Artin representations. In the case when $K/\Q$ is an $S_3$-extension, we prove certain unconditional results by leveraging a result of Maire and Rougnant \cite{maire2020note} on the $p$-rationality of $S_3$-extensions of $\Q$.

\begin{theorem}[Theorem \ref{last theorem}]
    Let $K/\Q$ be an imaginary $S_3$ extension and $\rho$ be a $2$-dimensional Artin representation that factors through $\op{Gal}(K/\Q)$. Then,
    \[\# \{p\leq x\mid p\in T(\rho)\}\geq c\log x.\]
\end{theorem}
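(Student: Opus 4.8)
The plan is to translate the vanishing of $S_{\chi,\epsilon}(\Q_\infty)$ into a statement about $p$-rationality via Theorem~\ref{cor 3.10}, to describe $S(\rho)$ by a Chebotarev condition, and then to feed in the work of Maire--Rougnant on $S_3$-extensions.

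First I would fix the representation-theoretic picture. Since $\rho$ is irreducible and factors through $\op{Gal}(K/\Q)\simeq S_3$, it is the standard two-dimensional representation, which is faithful, so $K$ is precisely the field cut out by $\rho$. Because $K$ is totally imaginary, complex conjugation is a transposition and in particular lies outside $A_3$; hence the unique quadratic subfield $L=K^{A_3}$ is imaginary quadratic, $K/L$ is cyclic of degree $3$, and $\rho=\op{Ind}_L^\Q\zeta$ for a character $\zeta$ of $\op{Gal}(K/L)$ of exact order $3$. An odd prime $p\nmid 6$ then lies in $S(\rho)$ precisely when $\op{Frob}_p$ is a $3$-cycle in $S_3$, which holds for a set of primes of density $1/3$ by Chebotarev. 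For such $p$ the decomposition group $\Delta_\p$ is cyclic of order $3$ generated by $\op{Frob}_p$, and $\rho_{|\Delta_{\p}}\cong\epsilon_\p\oplus\epsilon_\p^{-1}$ with $\epsilon_\p(\op{Frob}_p)$ a primitive cube root of unity; consequently $D/D^{\epsilon_\p}\cong\cF/\cO$ as an $\cO$-module, with $\Delta_\p$ acting through $\epsilon_\p^{-1}$.

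Next I would apply Theorem~\ref{cor 3.10}: for $p\in S(\rho)$ the hypotheses of Assumption~\ref{main ass} are in force, and $S_{\chi,\epsilon}(\Q_\infty)$ vanishes as soon as $H^0(\Delta_\p,D/D^{\epsilon_\p})=0$, $K$ is $p$-rational, and $p\nmid h_K$. Two of these three conditions are automatic. The cohomological one holds whenever $p\neq 3$: the operator $\op{Frob}_p-1$ acts on $D/D^{\epsilon_\p}$ as multiplication by $u:=\epsilon_\p^{-1}(\op{Frob}_p)-1$, and $u\in\cO^\times$ because $u\cdot(\epsilon_\p(\op{Frob}_p)-1)=3$; since multiplication by a unit is a bijection of $\cF/\cO$, the module of invariants is trivial. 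And $p\nmid h_K$ fails only for the finitely many primes dividing the fixed integer $h_K$. It therefore suffices to exhibit at least $c\log x$ primes $p\le x$ lying in $S(\rho)$ for which $K$ is $p$-rational; all but finitely many of these lie in $T(\rho)$.

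Finally I would invoke the analysis of Maire and Rougnant \cite{maire2020note} of the $p$-rationality of $S_3$-extensions of $\Q$, which yields at least $c\log x$ primes $p\le x$ for which $K$ is $p$-rational. The remaining point is to ensure that a positive proportion of these --- enough to retain the $\log x$ lower bound --- also satisfies the condition ``$\op{Frob}_p$ is a $3$-cycle'' that defines $S(\rho)$. Since this is a Chebotarev condition of density $1/3$, I would re-run the Maire--Rougnant construction with it imposed throughout, which should only rescale the constant $c$; if, as seems likely, their analysis is already organized according to the decomposition of $p$ in $K$, one simply works in the stratum where $\op{Frob}_p$ is a $3$-cycle. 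I expect the main obstacle to be exactly this compatibility: verifying that the mechanism responsible for the $\log x$ lower bound --- the step-by-step selection of a new prime avoiding a bounded list of congruence and ramification constraints --- survives the imposition of the extra Frobenius constraint, and that the $p$-rationality criterion used there does not implicitly force $p$ to split in $K$ in a different way.
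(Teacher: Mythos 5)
Your proposal follows the same route as the paper's own (quite terse) proof: reduce to the $p$-rationality of $K$ via Theorem~\ref{cor 3.10} and then appeal to the Maire--Rougnant bound of Theorem~\ref{maire rougnant input}. You are in fact more careful than the paper in verifying the subsidiary hypotheses — the computation that $H^0(\Delta_{\p},D/D^{\epsilon_\p})=0$ away from $p=3$ via the identity $(\epsilon_\p(\op{Frob}_p)-1)(\epsilon_\p^{-1}(\op{Frob}_p)-1)=3$, and the observation that the hypothesis $p\nmid h_K$ of Theorem~\ref{cor 3.10} excludes only finitely many primes, which the paper's proof suppresses. The compatibility concern you flag at the end is a genuine point that the paper does not address: Theorem~\ref{maire rougnant input} produces primes at which $K$ is $p$-rational, while membership in $T(\rho)\subset S(\rho)$ additionally imposes the Chebotarev condition that $\op{Frob}_p$ be a $3$-cycle. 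The paper's proof simply asserts ``The result thus follows,'' leaving it implicit that the Maire--Rougnant selection of primes (which proceeds by explicit arithmetic in the quadratic and cubic subfields of $K$) can be carried out inside the required Frobenius class; verifying this, as you propose, is exactly what a complete argument would need. One further caveat worth recording: Theorem~\ref{maire rougnant input} as quoted in the paper is conditional on the generalized abc-conjecture, so Theorem~\ref{last theorem} inherits that hypothesis as stated, in tension with the introduction's description of the $S_3$ case as unconditional.
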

The result above is certainly weaker than what one is led to conjecture, however, it does well to illustrate the effectiveness of Theorem \ref{cor 3.10}. Throughout, we motivate our results by drawing upon analogues from the classical Iwasawa theory of class groups. 

\subsection{Organization}
Including the introduction, the article consists of four sections. In section \ref{s 2} we review the classical Iwasawa theory of class groups and Artin representations. We review results of Federer and Gross \cite{federer1980regulators} which gives an explicit relationship between $p$-adic regulators and Iwasawa invariants. We end this section by reviewing some results of Greenberg and Vatsal \cite{greenbergvatsalArtinrepns} on the Iwasawa theory of Artin representations. In the section \ref{s 3}, we discuss the notion of the Euler characteristic of a cofinitely generated and cotorsion module over the Iwasawa algebra. In section \ref{s 4}, we formulate and study some natural distribution questions, which serve to illustrate our results.

\section*{Statements and Declarations}

\subsection*{Conflict of interest} The authors report there are no conflict of interest to declare.

\subsection*{Data Availability} There is no data associated to the results of this manuscript. 

\subsection*{Acknowledgement} The authors thank the referees for helpful comments and suggestions.

\section{Iwasawa theory of class groups and Artin representations}\label{s 2}
\subsection{Classical Iwasawa theory and Iwasawa invariants}
\par We review the classical Iwasawa theory over the cyclotomic $\Z_p$-extension of a number field. The reader may refer to \cite{washington1997introduction} for a more comprehensive treatment. Throughout this section, we shall set $K$ to denote a number field and $p$ an odd prime number. Fix an algebraic closure $\bar{K}$ of $K$. All algebraic extensions considered in this article shall implicitly be assumed to be in $\bar{K}$. Let $\op{Cl}(K)$ denote the class group of $K$ and $\op{Cl}_p(K)$ its $p$-primary part. \par For $n\geq 1$, let $K(\mu_{p^n})$ be the number field extension of $K$ obtained by adjoining the $p^n$-th roots of unity to $K$. Denote by $K(\mu_{p^\infty})$ the union of all extensions $K(\mu_{p^n})$. There is a unique $\Z_p$-extension $K_\infty/K$ which is contained in $K(\mu_{p^\infty})$, which is referred to as the \emph{cyclotomic $\Z_p$-extension} of $K$. For $n\geq 1$, set $K_n/K$ to be the sub-extension of $K_\infty/K$, for which $\op{Gal}(K_n/K)$ is isomorphic to $\Z/p^n \Z$. Setting $K_0:=K$, refer to $K_n$ as the \emph{$n$-th layer}. Taking $\Gamma_n:=\op{Gal}(K_\infty/K_n)$ we identify $\op{Gal}(K_n/K)$ with $\Gamma/\Gamma_n$. For $n\in \Z_{\geq 0}$, denote by $H_p(K_n)$ the $p$-Hilbert class field of $K_n$. In other words, $H_p(K_n)$ is the maximal unramified abelian $p$-extension of $K_n$ in $\bar{K}$. Set $X_n$ to denote the Galois group $\op{Gal}(H_p(K_n)/K_n)$, and identify $X_n$ with the maximal $p$-primary quotient of $\op{Cl}(K_n)$. Thus, $[H_p(K_n):K_n]$ is equal to $\# \op{Cl}_p(K_n)$. Since the primes above $p$ are totally ramified in $K_\infty$, it follows that $H_p(K_n)\cap K_\infty=K_n$, and thus, there are surjective maps $X_m\rightarrow X_n$ for all $m\geq n$. Taking $X_\infty$ to be the inverse limit $\varprojlim_n X_n$, we find that $X_\infty$ is both a $\Z_p$-module as well as a module over $\Gamma$. On the other hand, letting $H_p(K_\infty)$ denote the maximal unramified abelian pro-$p$ extension of $K_\infty$, we identify $X_\infty$ with the Galois group $\op{Gal}(H_p(K_\infty)/K_\infty)$.

\par On order to better study the algebraic structure of $X_\infty$, it proves fruitful to view $X_\infty$ as a module over a completed group ring known as the Iwasawa algebra. This algebra is defined as follows
\[\Lambda:=\Z_p\llbracket \Gamma \rrbracket =\varprojlim_{n} \Z_p[\Gamma /\Gamma_n].\]
Choosing a topological generator $\gamma\in\Gamma$, we identify $\Lambda$ with the formal power series ring $\Z_p\llbracket T\rrbracket $, by setting $T:=(\gamma-1)$. As a $\Lambda$-module, $X_\infty$ is finitely generated and torsion \cite[chapter 13]{washington1997introduction}. 
\par Let $\cO$ be a valuation ring with residue characteristic $p$, and let $\varpi$ be a uniformizer of $\cO$. Then, the Iwasawa algebra over $\cO$ is defined by extending coefficients to $\cO$, as follows $\Lambda_{\cO}:=\Lambda \otimes_{\Z_p}\cO$. The Iwasawa algebra $\Lambda_\cO$ is a local ring with maximal ideal $\mathfrak{m}=(\varpi, T)$. A polynomial $f(T)\in \cO\llbracket T\rrbracket$ is said to be \emph{distinguished} if it is a monic polynomial whose non-leading coefficients are divisible by $\varpi$. The Weierstrass preparation theorem states that any power series $f(T)$ decomposes into a product
\[f(T)=\varpi^\mu \times g(T)\times u(T),\]where $\mu\in \Z_{\geq 0}$, $g(T)$ is a distinguished polynomial and $u(T)$ is a unit in $\Lambda_\cO$. The $\mu$-invariant of $f(T)$ is the power of $\varpi$ above, and the $\lambda$-invariant is the degree of the distinguished polynomial $g(T)$. The prime ideals of height $1$ are the principal ideals $(\varpi)$ and $(g(T))$, where $g(T)$ is an irreducible distinguished polynomial. 

\par Given a finitely generated and torsion $\Lambda_{\cO}$-module $M$, there is a homomorphism of $\Lambda_{\cO}$-modules
\begin{equation}\label{structural homomorphism}M \longrightarrow  \left(\bigoplus_{i=1}^s\frac{\cO\llbracket T\rrbracket}{(\varpi^{\mu_i})}\right) \oplus \left(\bigoplus_{j=1}^t \frac{\cO\llbracket T\rrbracket}{(f_i(T)^{\lambda_i})}\right),\end{equation}
with finite kernel and cokernel.
Here, $\mu_i, \lambda_j\geq 0$, and $f_j(T)$ are irreducible distinguished polynomials. For further details, we refer to \cite[Theorem 3.12]{washington1997introduction}.

\begin{definition}\label{def of mu and lambda}
    The $\mu$-invariant $\mu_p(M)$ is the sum of the entries $\sum_{i=1}^s \mu_i$ if $s>0$, and is set to be $0$ if $s=0$. On the other hand, the $\lambda$-invariant $\lambda_p(M)$ is defined to be $\sum_{i=1}^s \lambda_i \op{deg}(f_i)$ if $s>0$, and defined to be $0$ if $t=0$.
    The characteristic element $f_M(T)$ is the product 
    \[f_M(T):=\prod_i \varpi^{\mu_i} \times \prod_j f_j(T)^{\lambda_j}.\]
\end{definition} We remark that the $\mu$-invariant and $\lambda$-invariant of $f_M(T)$ are $\mu_p(M)$ and $\lambda_p(M)$ respectively.
\begin{proposition}
    Suppose that $M$ is a finitely generated and torsion $\Lambda_{\cO}$-module. Then, the following assertions hold,
    \begin{enumerate}
        \item $\mu_p(M)=0$ if and only if $M$ is finitely generated as an $\cO$-module. In this case, $\lambda_p(M)$ is the $\cO$-rank of $M$.
        \item Letting $r_p(M)$ denote the order of vanishing of $f_M$ at $0$, we have that \[\lambda_p(M)\geq r_p(M).\] 
        \item Write $f_M(T)$ as a power series 
        \[f_M(T)=a_r T^r+a_{r+1} T^{r+1}+\dots+ a_\lambda T^{\lambda},\] where $r=r_p(M)$ and $\lambda=\lambda_p(M)$. The $\mu$-invariant $\mu_p(M)=0$ if and only if there is a coefficient $a_i$ not divisible by $\varpi$.
        \item We have that $\varpi^\mu|| a_\lambda$ and that $\varpi^{\mu+1}|a_i$ for all $i<\lambda$.
    \end{enumerate}
    \end{proposition}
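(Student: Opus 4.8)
The plan is to deduce everything from two facts already in hand: the homomorphism \eqref{structural homomorphism}, which I abbreviate as $\phi\colon M \to E$ with $E$ the displayed elementary module and $\ker\phi$, $\coker\phi$ finite; and the Weierstrass factorization $f_M(T) = \varpi^{\mu}\,g_M(T)$ of the characteristic element, where $\mu = \mu_p(M)$ and $g_M(T) = \prod_j f_j(T)^{\lambda_j}$ is a distinguished polynomial of degree $\lambda := \lambda_p(M)$. Parts (2)--(4) are then formal consequences of this factorization, and only part (1) uses \eqref{structural homomorphism}.

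For (1) I would first record the $\cO$-module structure of the elementary summands of $E$. For a distinguished polynomial $f$ of degree $e$, the Weierstrass division theorem identifies $\cO\llbracket T\rrbracket/(f^{\lambda_j})$ with the free $\cO$-module on $1, T, \dots, T^{e\lambda_j - 1}$, so it is finitely generated over $\cO$ of rank $e\lambda_j$. By contrast, if $\mu_i \geq 1$ then $\cO\llbracket T\rrbracket/(\varpi^{\mu_i}) \cong (\cO/\varpi^{\mu_i})\llbracket T\rrbracket$ contains the $\cO$-submodule $\bigoplus_{n\geq 0}\cO/\varpi^{\mu_i}$ generated by the monomials $T^n$, which is not finitely generated over $\cO$; since $\cO$ is Noetherian, the summand is not finitely generated either. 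Hence $E$ is finitely generated over $\cO$ if and only if every $\mu_i$ vanishes, i.e. if and only if $\mu_p(M) = 0$. Because $\ker\phi$ and $\coker\phi$ are finite and $\cO$ is Noetherian, $M$ is finitely generated over $\cO$ exactly when $E$ is: one direction uses that a submodule of a finitely generated $\cO$-module is finitely generated (applied to $\op{im}\phi \subseteq E$), the other that an extension of finitely generated modules is finitely generated. When $\mu_p(M) = 0$, $E$ is $\cO$-free of rank $\sum_j \lambda_j \deg f_j = \lambda_p(M)$, and since a finite kernel and cokernel do not change the $\cO$-rank, $\rank_\cO M = \lambda_p(M)$.

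For (2)--(4) I would argue directly on $f_M(T) = \varpi^{\mu}\,g_M(T)$, writing $g_M(T) = T^{\lambda} + b_{\lambda-1}T^{\lambda-1} + \dots + b_0$ with $\varpi \mid b_i$ for $i < \lambda$. Multiplication by the nonzero constant $\varpi^{\mu}$ leaves the order of vanishing at $T=0$ unchanged, so $r_p(M)$ is the $T$-adic valuation of $g_M$, which is at most $\deg g_M = \lambda_p(M)$; this is (2). Comparing coefficients gives $a_{\lambda} = \varpi^{\mu}$ and $a_i = \varpi^{\mu}b_i$ for $i < \lambda$, whence $\varpi^{\mu}\mathrel{\|}a_{\lambda}$ and $\varpi^{\mu+1}\mid a_i$ for all $i < \lambda$, which is (4); and (3) follows immediately, since for $\mu>0$ every $a_i$ is divisible by $\varpi$, whereas for $\mu = 0$ the coefficient $a_{\lambda} = 1$ is a unit. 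The only input that is not pure bookkeeping is the Weierstrass division theorem used in (1) to compute the $\cO$-rank of $\cO\llbracket T\rrbracket/(f^{\lambda_j})$; the one place calling for a little care is the transfer of finite generation between $M$ and $E$ through \eqref{structural homomorphism}, which rests on the Noetherianity of $\cO$.
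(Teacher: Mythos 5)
Your proof is correct, and it fills in precisely the details that the paper leaves implicit: the paper's own "proof" is a one-line assertion that the claims are "easy observations that follow from the structural homomorphism and the definition of the Iwasawa invariants," and your argument is the natural unpacking of exactly that — computing the $\cO$-module structure of the elementary summands via Weierstrass division for part (1), and reading off parts (2)--(4) from the explicit factorization $f_M = \varpi^{\mu} g_M$ with $g_M$ distinguished. No essential difference in route; you have simply supplied the bookkeeping the authors omitted.
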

    \begin{proof}
        The results are easy observations that follow from the structural homomorphism \eqref{structural homomorphism} and the definition of the Iwasawa invariants. 
    \end{proof}

    \begin{remark}
        Let $\cO'$ be a valuation ring that is a finite extension of $\cO$, and $e$ be its ramification index. Set $M_{\cO'}:=M\otimes_{\cO} \cO'$ and regard $M_{\cO'}$ as a module over $\Lambda_{\cO'}$. Then, it is easy to see that \[\mu_p(M_{\cO'})=e\mu_p(M)\text{ and }\lambda_p(M_{\cO'})=\lambda_p(M).\]
    \end{remark}
 \par We denote the $\mu$-invariant (resp. $\lambda$-invariant) of $X_\infty$ by $\mu_p(K)$ (resp. $\lambda_p(K)$). In this setting, $\cO:=\Z_p$. Iwasawa proved that for all large enough values of $n$, there is an invariant $\nu_p(K)\in \Z$ for which 
\[\log_p\left(\# \op{Cl}_p(K_n)\right)=p^n \mu_p(K)+n \lambda_p(K)+\nu_p(K),\] cf. \cite[Theorem 13.13]{washington1997introduction}. Moreover, Iwasawa conjectured that $\mu_p(K)=0$ for all number fields $K$, cf. \cite{iwasawa1973z}. For abelian extensions $K/\Q$ the conjecture has been proven by Ferrero and Washington \cite{ferrero1979iwasawa}.

\subsection{The leading coefficient of the characteristic series}

\par Let $K$ be a CM field with totally real subfield $K^+$. The Galois group $\op{Gal}(K/K^+)$ acts on $X_\infty$. Let $\tau$ be the generator of $\op{Gal}(K/K^+)$ and set
\[X_\infty^-:=\{x\in X_\infty\mid \tau(x)=-x\}. \] Then, $X_\infty^-$ is a $\Lambda$-module whose $\mu$-invariant (resp. $\lambda$-invariant) is denoted $\mu_p^-(K)$ (resp. $\lambda_p^-(K)$). Let $f_{K}^-(T)$ be the characteristic element associated to $X_\infty^-$. Take $I$ to be the set of primes of $K^+$ that lie above $p$ and split in $K$, and set $r_{p,K}:=\# I$. Let $S$ be the set of places of $K$ dividing $p$ and $\infty$. Let $U=U_K$ be the group of $S$-units of $K^*$ and let $M=M_K$ be the free abelian group of divisors at $S$. Given a $\op{Gal}(K/K^+)$-module $N$, let 
\[N^-:=\{n\in N\mid \tau(n)=-n\}.\] Let $R$ be a ring, then we set $R N^-$ to denote the extension of scalars $N^-\otimes R$. 
It is easy to see that both $U^-$ and $M^-$ are free abelian group of rank $r_{p, K}$. 
\par The map 
\[\phi:U^-\rightarrow M^-\] is defined by setting 
\[\phi(x):=\sum_{v|p} \op{ord}_p\left(\op{Norm}_{K_v/\Q_p}x \right)v\]
The induced map 
\[\phi: \Q U^-\rightarrow \Q M^-\] is an isomorphism (cf. \cite[Proposition 1.4]{federer1980regulators}). The inverse of $\phi$ can be described as follows. For each prime $v\in I$, we choose a prime $\widetilde{v}|v$ of $K$. Note that $\Q M^-$ has a basis \[\mathcal{B}=\{\widetilde{v}-\tau(\widetilde{v})\mid v\in I\}.\] Let $\p$ be the prime ideal associated to $v$, and $h$ be a positive integer such that $\p^h= (\alpha)$ is principal. Both $\alpha$ and $\tau(\alpha)$ are elements of $U$, and $\alpha/\tau(\alpha)\in U^-$. Setting $f_v$ to denote the residue class degree of $v$, take 
\[\phi^{-1}\left(\widetilde{v}-\tau(\widetilde{v})\right):=\frac{1}{h f_v}\otimes \left(\alpha/\tau(\alpha)\right).\] Then, $\phi^{-1}$ is the inverse to $\phi$. 

\par Define the homomorphism 
\[\lambda: U^-\rightarrow \Q_p M^-\] by setting 
\[\lambda(y):=\sum_{v|p} 
\log_p\left(N_{K_v/\Q_p}(y)\right).\] Composing $\phi^{-1}$ with $\lambda$, we obtain an endomorphism 
\[\lambda \phi^{-1}: \Q_p M^-\rightarrow \Q_p M^-.\]
\begin{definition}
    With notation as above, define the regulator $\op{Reg}_p(K)$ as follows
    \[\op{Reg}_p(K):=\det \left(\lambda\phi^{-1}\mid \Q_p M^{-}\right),\] where it is understood that $\op{Reg}_p(K):=1$ when $r_{p,K}=0$. 
\end{definition}

\begin{theorem}[Iwasawa, Greenberg]\label{order of vanishing}
    With respect to the notation above, $f_K^-(T)$ is divisible by $T^{r_{p, K}}$. 
\end{theorem}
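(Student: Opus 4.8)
The plan is to follow the argument of Federer and Gross: realise $f_K^-(T)$ as the characteristic element of $X_\infty^-$ and compute the order of its zero at $T=0$ by means of the fundamental exact sequence of global class field theory, locating that zero at the primes of $K^+$ above $p$ that split in $K$.

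First I would reformulate the statement. Writing $(T)$ for the height‑one prime $\ker(\Lambda\to\Z_p)$, the local ring $\Lambda_{(T)}$ is a discrete valuation ring with uniformiser $T$, and the order of vanishing of $f_K^-$ at $T=0$ equals $\op{length}_{\Lambda_{(T)}}\big((X_\infty^-)_{(T)}\big)$; so it suffices to show this length is at least $r_{p,K}$. I would then invoke the exact sequence of $\Lambda$‑modules
\[ 0 \longrightarrow \overline{\cE}_\infty^- \longrightarrow \cU_\infty^- \xrightarrow{\ \theta\ } \X_\infty^- \longrightarrow X_\infty^- \longrightarrow 0, \]
where $\X_\infty^-$ is the minus part of the Galois group of the maximal abelian pro‑$p$ extension of $K_\infty$ unramified outside $p$, $\cU_\infty^-$ the inverse limit over the tower of the minus parts of the pro‑$p$ semi‑local units above $p$, and $\overline{\cE}_\infty^-$ the closure of the global units therein. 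A rank count — using the weak Leopoldt conjecture for the cyclotomic $\Z_p$‑extension, a theorem of Iwasawa — shows that the $\Lambda$‑ranks of $\cU_\infty^-$ and of $\X_\infty^-$ both equal $d^+:=[K^+:\Q]$, while $\overline{\cE}_\infty^-$ and $X_\infty^-$ are $\Lambda$‑torsion. The torsion submodules of $\cU_\infty^-$ and $\overline{\cE}_\infty^-$ consist of copies of $\Z_p(1)$ (local and global $p$‑power roots of unity), on which $\gamma-1$ acts through $\kappa(\gamma)-1\neq 0$, hence invertibly after localising at $(T)$; so $(\overline{\cE}_\infty^-)_{(T)}=0$ and $(\cU_\infty^-)_{(T)}\cong\Lambda_{(T)}^{\,d^+}$. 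Localising at $(T)$ and comparing with a free complement of the torsion of $(\X_\infty^-)_{(T)}$ gives
\[ \op{length}_{\Lambda_{(T)}}\big((X_\infty^-)_{(T)}\big)\ \ge\ \op{ord}_T\det(\theta_{(T)})\ \ge\ \dim_{\Q_p}\ker(\theta\bmod T), \]
where $\theta\bmod T$ denotes the induced $\Q_p$‑linear map $(\cU_\infty^-)_\Gamma\otimes\Q_p\to(\X_\infty^-)_\Gamma\otimes\Q_p$. So the problem reduces to exhibiting a kernel for $\theta\bmod T$ of dimension at least $r_{p,K}$.

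The heart of the argument is that the split primes supply this kernel. Writing $\cU_\infty^-=\bigoplus_{v\mid p}\cU_{\infty,v}^-$ over the primes $v$ of $K^+$ above $p$, consider a split prime $v=w\bar w$. Then $K^+_v$ is unchanged, $w$ is totally ramified in $K_\infty/K$, and the local extension at $w$ is the cyclotomic $\Z_p$‑extension $K^+_v\cdot\Q_\infty$, which is disjoint from the unramified $\Z_p$‑extension of $K^+_v$. Tracking this through the local reciprocity maps, the decomposition group inside $\X_\infty^-$ of the unique prime of $K_\infty$ over $w$ is, up to finite index, the product of its inertia subgroup with a ``Frobenius line'' $\cong\Z_p$, and that line is $\Gamma$‑fixed (the prime being the only one over $w$), hence dies in the $\Gamma$‑coinvariants after $\otimes\Q_p$; equivalently, the image of $(\cU_{\infty,v}^-)_\Gamma\otimes\Q_p$ in $(\X_\infty^-)_\Gamma\otimes\Q_p$ falls one dimension short. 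This is exactly the degeneracy recorded by the maps $\phi$ and $\lambda$ above — $\phi$, a valuation map on the $r_{p,K}$‑dimensional space $\Q M^-$, is an isomorphism, while its $p$‑adic companion entering $\Reg_p(K)$ degenerates precisely along these directions. Summing the missing dimension over the $r_{p,K}$ split primes gives $\dim_{\Q_p}\ker(\theta\bmod T)\ge r_{p,K}$, hence $\op{ord}_T f_K^-\ge r_{p,K}$, i.e. $T^{r_{p,K}}\mid f_K^-(T)$.

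The main obstacle is this last step: pinning down the local–global compatibility at the ramified primes and the precise count of the degeneracy, so as to see that the split primes — and only they — force an honest zero of $f_K^-$ at $T=0$. (The possible $(T)$‑torsion in $\X_\infty^-$ is harmless, as it only increases the length.) This is what distinguishes the statement from the analogous, and in general false, divisibility for the full class‑group Iwasawa module, and it is the technical core of the Federer–Gross analysis; the remainder is rank bookkeeping.
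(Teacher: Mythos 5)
The paper gives no argument of its own for this theorem: it simply cites Iwasawa \cite{iwasawa1973z} and Greenberg \cite{greenberg1973certain}. So you are not reproducing or diverging from a proof in the paper; you are supplying one, and you chose to run it through the Federer--Gross localisation-at-$(T)$ machinery (which is indeed the framework the paper subsequently uses for the leading-coefficient formula). Your structural set-up is sound: the identification of $\operatorname{ord}_{T=0}f_K^-$ with $\operatorname{length}_{\Lambda_{(T)}}\bigl((X_\infty^-)_{(T)}\bigr)$, the minus part of the class-field-theory four-term sequence, the vanishing of $(\overline{\cE}_\infty^-)_{(T)}$ because the only minus units are $\Zp(1)$-type and $T-(\kappa(\gamma)-1)$ is a unit in $\Lambda_{(T)}$, the rank bookkeeping giving $\Lambda$-rank $[K^+:\Q]$ on both sides, and the inequality $\operatorname{ord}_T\det\theta_{(T)}\ge\dim_{\Qp}\ker(\theta\bmod T)$.

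The genuine gap is the step you yourself flag: the claim that each split prime $v\in I$ contributes an honest one-dimensional kernel to $\theta\bmod T$, and that these contributions are what produce $r_{p,K}$ independent directions. As written, the justification is a gesture at ``Frobenius lines'' in decomposition groups being $\Gamma$-fixed and therefore ``dying'' in the coinvariants, but fixed lines do not die in coinvariants; what one actually needs is a precise statement about the image of the local reciprocity map at $w$ inside $(\X_\infty^-)_\Gamma\otimes\Qp$, together with the comparison to the decomposition group of $w$ in $X_\infty^-$ --- this is exactly where Greenberg's argument lives, and it is not a formality. Moreover, the appeal to the maps $\phi$ and $\lambda$ is misplaced: $\phi$ is an isomorphism and $\lambda\phi^{-1}$ governs the \emph{leading coefficient} $a_r$ (the Federer--Gross regulator formula, the next theorem in the paper), not the \emph{order} of the zero. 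The order of vanishing comes from the $\Gamma$-module structure of the decomposition subgroups at the primes in $I$, independently of whether the regulator vanishes; conflating the two obscures the fact that $T^{r_{p,K}}\mid f_K^-$ holds unconditionally while $a_r\ne 0$ is a separate nondegeneracy statement. To close the gap you would need to verify the local computation: for a split $v=w\bar w$, exhibit an explicit norm-compatible system in $\cU^-_{\infty,v}$ whose image under $\theta$ lies in $T\cdot\X_\infty^-$ after tensoring with $\Qp$, and check that the resulting $r_{p,K}$ kernel elements are linearly independent in $(\cU_\infty^-)_\Gamma\otimes\Qp$.
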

\begin{proof}
    We refer to works of Iwasawa \cite{iwasawa1973z} and Greenberg \cite{greenberg1973certain} for the proof of the statement.
\end{proof}
As a consequence of Theorem \ref{order of vanishing}, one may write 
\[f_K^-(T)=a_r T^r+a_{r+1} T^{r+2}+\dots+ a_\lambda T^\lambda, \] where $r=r_{p, K}$ and $\lambda:=\lambda_p^-(K)$. We note that as a consequence, 
\[ \lambda_p^-(K)\geq r_{p,K}. \]For $a,b\in \Q_p$, we write $a\sim b$ to mean that $a=ub$ for some $u\in \Z_p^\times$. Let $h_K$ (resp. $h_K^-$) denote the class number (resp. number of elements in the minus part of the class group) of $K$. Let $w_{K(\mu_p)}$ be the number of roots of unity contained in $K(\mu_p)$. 
\begin{theorem}[Federer-Gross]
    With respect to the notation above, the following are equivalent
    \begin{enumerate}
        \item $\op{Reg}_p(K)\neq 0$,
        \item $a_r\neq 0$.
    \end{enumerate}
    Assuming that these equivalent conditions hold, we have that 
    \[a_r\sim \frac{h_K^- \left(\prod_{v\in I} f_v\right) \op{Reg}_p(K)}{w_{K(\mu_p)}^{r_{p,K}}}.\]
\end{theorem}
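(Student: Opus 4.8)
The plan is to re-express $f_K^-(T)$, up to explicit elementary factors, through an Iwasawa-theoretic avatar of the $S$-units of $K$, and then to extract its leading coefficient at $T=0$ by a first-order descent that reproduces the determinant $\op{Reg}_p(K)=\det(\lambda\phi^{-1})$. Let $S$ be the set of places of $K$ above $p$, write $\cU_\infty=\varprojlim_n\bigl(\bigoplus_{v\mid p}\cO_{K_{n,v}}^\times\otimes\Z_p\bigr)$ for the module of semi-local units along the cyclotomic tower, and let $\overline{\cE}_\infty$ be the closure of the global $S$-units inside $\cU_\infty$. Global class field theory relates $X_\infty$ to the Galois group of the maximal abelian pro-$p$ extension of $K_\infty$ unramified outside $p$ through a standard exact sequence involving $\cU_\infty$ and $\overline{\cE}_\infty$; passing to $\tau$-eigenspaces (which is exact since $p$ is odd) and comparing with Iwasawa's description of $X_\infty^-$ expresses $f_K^-(T)$ through the $S$-unit module $(\cU_\infty/\overline{\cE}_\infty)^-$, with extra factors accounting for the $r_{p,K}$ primes of $K^+$ above $p$ that split in $K$. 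These split primes are precisely what force the divisibility $T^{r_{p,K}}\mid f_K^-(T)$ of Theorem~\ref{order of vanishing}.

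I would then carry out the first-order descent at $T=0$. Since $f_K^-(T)$ vanishes to order $r_{p,K}$ there, computing $a_r$ amounts to computing the determinant of the endomorphism that $\gamma-1$ induces, at first order, on an $r_{p,K}$-dimensional $\Q_p$-space canonically identified with $\Q_p M^-$. On the unit side, the leading behaviour of the $\Gamma$-action on the relevant graded piece of $(\cU_\infty/\overline{\cE}_\infty)^-$ at the trivial character is measured by the $p$-adic logarithm --- a Coates--Wiles/Coleman-type computation --- which is precisely the map $\lambda\colon U^-\to\Q_p M^-$ from the excerpt; the passage between $U^-$ and $M^-$ via the principal ideals $\p^{h}=(\alpha)$ and the basis $\{\widetilde v-\tau(\widetilde v)\}$ of $\Q M^-$ introduces the inverse valuation map $\phi^{-1}$. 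Matching the two identifies the determinant in question with $\op{Reg}_p(K)$ up to the elementary factors of the statement: the order $h_K^-$ of the minus class group of $K$ enters from the genus-theoretic descent to the zeroth layer, the residue degrees appearing in $\phi^{-1}$ and in the change of lattice from $M^-$ to $\Q M^-$ contribute $\prod_{v\in I}f_v$, and the normalization of the $p$-adic logarithm on the local units --- reflecting the roots of unity in $K(\mu_p)$ --- contributes the $w_{K(\mu_p)}^{r_{p,K}}$ in the denominator. The equivalence of (1) and (2) then follows at once: $\lambda\phi^{-1}$ is an endomorphism of a finite-dimensional $\Q_p$-vector space, so $f_K^-$ vanishes to order exactly $r_{p,K}$ at $T=0$ if and only if $\lambda\phi^{-1}$ is invertible, i.e. if and only if $\op{Reg}_p(K)\neq 0$.

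I expect the main obstacle to be pinning the elementary factors $h_K^-$, $\prod_{v\in I}f_v$ and $w_{K(\mu_p)}^{r_{p,K}}$ down exactly. This forces one to track several long exact sequences --- global units, $S$-units, semi-local units, and class groups over $K$ and over $K_\infty$ --- through the descent without any loss of index, and in particular to control or eliminate any nonzero finite $\Lambda$-submodule of $X_\infty^-$, so that the first-order descent genuinely computes $a_r$ and not merely $\mu_p^-(K)$ and $\lambda_p^-(K)$; for this one invokes Iwasawa's theorem that $X_\infty^-$ has no nonzero finite $\Lambda$-submodule, or argues around it. Executing the Coleman-type computation of the $\Gamma$-action on $\cU_\infty^-$ compatibly with the global $p$-adic logarithm $\lambda$, keeping every constant, is the technical heart of the argument; for the complete details I would defer to Federer and Gross~\cite{federer1980regulators}.
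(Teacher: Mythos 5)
The paper offers no proof of its own here — it simply cites \cite[Proposition 3.9]{federer1980regulators}, the same source you defer to, so in effect the two approaches coincide. Your sketch (the class-field-theoretic exact sequence for $X_\infty^-$ via semi-local units and closed $S$-units, the order-$r_{p,K}$ vanishing forced by the split primes, the first-order descent identifying $a_r$ with $\det(\lambda\phi^{-1})$ on $\Q_p M^-$ up to the elementary index factors $h_K^-$, $\prod_{v\in I}f_v$ and $w_{K(\mu_p)}^{r_{p,K}}$, and the need to control finite $\Lambda$-submodules of $X_\infty^-$ so the descent computes $a_r$ exactly rather than just $\mu_p^-(K)$ and $\lambda_p^-(K)$) is a reasonable high-level account of the Federer--Gross argument that the paper invokes but does not reproduce.
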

\begin{proof}
    The above result is \cite[Proposition 3.9]{federer1980regulators}.
\end{proof}
Suppose that $p\nmid f_v$ for all $v\in I$, and $p\nmid h_K$. Then, the map $\phi^{-1}$ is defined over $\Z_p$ and it is easy to see that $\op{Reg}_p(K)$ is divisible by $p^{r_{p,K}}$. The normalized regulator is defined as follows
\[\mathcal{R}_p(K):=\frac{\op{Reg}_p(K)}{p^{r_{p,K}}} .\]
\begin{corollary}\label{cor2.5}
    Let $K$ be a CM field for which 
    \begin{enumerate}[label=(\roman*)]
        \item $\mathcal{R}_p(K)\neq 0$,
        \item $w_{K(\mu_p)}\sim p$,
        \item $p\nmid f_v$ for all $v\in I$,
        \item $p\nmid h_K$.
    \end{enumerate}
    Then, the following are equivalent
    \begin{enumerate}
        \item\label{c1cor2.5} $\mu_p^-(K)=0$ and $\lambda_p^-(K)=r_{p,K}$,
        \item\label{c2cor2.5} $a_r$ is a unit in $\Z_p$,
        \item\label{c3cor2.5} $p\nmid \mathcal{R}_p(K)$.
    \end{enumerate}
\end{corollary}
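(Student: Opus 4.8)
The plan is to deduce the three-way equivalence by feeding the Federer--Gross formula for the leading coefficient $a_r$ into the structural properties of finitely generated torsion $\Lambda_{\cO}$-modules, applied to $M=X_\infty^-$ over $\cO=\Z_p$ with $\varpi=p$. The architecture is the chain \ref{c1cor2.5}$\Leftrightarrow$\ref{c2cor2.5}$\Leftrightarrow$\ref{c3cor2.5}: the equivalence \ref{c2cor2.5}$\Leftrightarrow$\ref{c3cor2.5} is the arithmetic input (Federer--Gross together with the hypotheses), while \ref{c1cor2.5}$\Leftrightarrow$\ref{c2cor2.5} is purely module-theoretic (the structure theorem together with Theorem \ref{order of vanishing}).

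I would first record that hypotheses (iii) and (iv) place us exactly in the setting of the discussion immediately preceding the corollary, so $\op{Reg}_p(K)\in p^{r_{p,K}}\Z_p$ and hence $\mathcal{R}_p(K)=p^{-r_{p,K}}\op{Reg}_p(K)$ lies in $\Z_p$; combined with (i) this gives $\op{Reg}_p(K)\neq 0$. Consequently the Federer--Gross theorem applies, yielding $a_r\neq 0$ and
\[
a_r\sim \frac{h_K^-\left(\prod_{v\in I}f_v\right)\op{Reg}_p(K)}{w_{K(\mu_p)}^{r_{p,K}}}.
\]
Next I would observe that every factor on the right except $\op{Reg}_p(K)$ contributes, up to the expected power of $p$, a $p$-adic unit: $p\nmid h_K^-$ because the minus part of the class group is a subquotient of $\op{Cl}(K)$, whose order $h_K$ is prime to $p$ by (iv); $p\nmid\prod_{v\in I}f_v$ by (iii); and $w_{K(\mu_p)}^{r_{p,K}}\sim p^{r_{p,K}}$ by (ii). Substituting $\op{Reg}_p(K)=p^{r_{p,K}}\mathcal{R}_p(K)$ and cancelling the powers of $p$ collapses the formula to $a_r\sim\mathcal{R}_p(K)$. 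Since both $a_r$ and $\mathcal{R}_p(K)$ lie in $\Z_p$, this yields \ref{c2cor2.5}$\Leftrightarrow$\ref{c3cor2.5} at once.

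For \ref{c1cor2.5}$\Leftrightarrow$\ref{c2cor2.5} I would apply the Proposition above to $M=X_\infty^-$. Because $a_r\neq 0$, the order of vanishing of $f_K^-(T)$ at $T=0$ is exactly $r=r_{p,K}$ (it is at least $r$ by Theorem \ref{order of vanishing}), so $r_p(X_\infty^-)=r_{p,K}$ and $\lambda_p^-(K)\geq r_{p,K}$. If $\mu_p^-(K)=0$ and $\lambda_p^-(K)=r$, then part (4) of the Proposition, which asserts that $a_\lambda$ is divisible by $p^{\mu}$ but not by $p^{\mu+1}$, says with $\mu=0$ and $\lambda=r$ that $a_r$ is a unit in $\Z_p$. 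Conversely, if $a_r\in\Z_p^\times$ then $a_r$ is not divisible by $p$, so $\mu_p^-(K)=0$ by part (3); part (4) then forces $p\mid a_i$ for all $i<\lambda_p^-(K)$, which is incompatible with $p\nmid a_r$ unless $\lambda_p^-(K)=r=r_{p,K}$. This closes the loop.

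I do not anticipate a genuine obstacle: the argument is essentially bookkeeping over two results already available in the excerpt, the Federer--Gross theorem and the structure theorem for torsion $\Lambda_{\cO}$-modules. The only points requiring care are (a) confirming that $\mathcal{R}_p(K)\in\Z_p$, which is exactly where hypotheses (iii) and (iv) re-enter and without which the comparison $a_r\sim\mathcal{R}_p(K)$ would not be an honest comparison inside $\Z_p$; and (b) being precise that the relation ``$\sim$'' absorbs exactly the units $h_K^-$ and $\prod_{v\in I}f_v$ and the unit part of $w_{K(\mu_p)}^{r_{p,K}}/p^{r_{p,K}}$, so that no residual power of $p$ survives the cancellation producing $a_r\sim\mathcal{R}_p(K)$.
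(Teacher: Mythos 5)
Your proof is correct and follows essentially the same route as the paper: combine the Federer--Gross formula with hypotheses (ii)--(iv) to reduce to $a_r\sim\mathcal{R}_p(K)$, and use the structure theory of torsion $\Lambda$-modules (the paper factors $f_K^-(T)=T^{r_{p,K}}g_K^-(T)$ and argues on units, you invoke parts (3)--(4) of the Proposition; these are the same argument in different packaging). Your write-up is actually more explicit than the paper's one-line "It follows from our assumptions that $a_r\sim\mathcal{R}_p(K)$", spelling out the unit bookkeeping and the verification that $\mathcal{R}_p(K)\in\Z_p$, but the underlying approach is identical.
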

\begin{proof}
    With respect to above notation, write $f_K^-(T)=T^{r_{p,K}} g_K^-(T)$, where $a_r=g_K^-(0)$. Then, $a_r$ is a unit in $\Z_p$ if and only if $g_K^-(T)$ is a unit in $\Lambda$. This implies that
    \[\mu_p^-(K)=0\text{ and }\lambda_p^-(K)=r_{p,K}.\] Conversely, if \[\mu_p^-(K)=0\text{ and }\lambda_p^-(K)=r_{p,K},\] then the factorization $f_K^-(T)=T^{r_{p,K}} g_K^-(T)$ implies that $g_K^-(T)$ must be a unit in $\Lambda$. Thus, we find that \eqref{c1cor2.5} and \eqref{c2cor2.5} are equivalent. It follows from our assumptions that $a_r\sim \mathcal{R}_p(K)$. Therefore, $a_r$ is a unit in $\Z_p$ if and only if $p\nmid \mathcal{R}_p(K)$. This shows that the conditions \eqref{c2cor2.5} and \eqref{c3cor2.5} are equivalent. 
\end{proof}
When $K$ is an imaginary quadratic field, $X_\infty=X_\infty^-$ and thus $\mu_p(K)=\mu_p^-(K)$ and $\lambda_p(K)=\lambda_p^-(K)$. In this setting, we find that 
\[r_{p, K}:=\begin{cases}
    & 0\text{ if }p\text{ splits in }K;\\
    & 1\text{ if }p\text{ is inert or ramified in }K.\\
\end{cases}\]
Note that $\mu_p(K)=0$ by the aforementioned result of Ferrero and Washington \cite{ferrero1979iwasawa}. 
\begin{corollary}\label{cor2.6}
    Let $K$ be an imaginary quadratic field and $p$ be an odd prime number. Then, the following assertions hold.
    \begin{enumerate}
        \item Suppose that $p$ splits in $K$. Then, $\lambda_p(K)=0$ if and only if $p\nmid h_K$. 
        \item Suppose that $p$ is inert in $K$. Then, $\lambda_p(K)=1$ if and only if $p\nmid h_K$ and $p\nmid \mathcal{R}_p(K/\Q)$. 
    \end{enumerate}
\end{corollary}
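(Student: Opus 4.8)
The plan is to deduce both parts from Corollary~\ref{cor2.5} together with the classical control theorem, after reducing to the minus part. Since $K$ is imaginary quadratic, $X_\infty = X_\infty^-$, so $\mu_p(K) = \mu_p^-(K)$ and $\lambda_p(K) = \lambda_p^-(K)$; by Ferrero--Washington $\mu_p(K) = 0$, hence $X_\infty$ is a finitely generated $\Z_p$-module of rank $\lambda_p(K)$, and I would invoke the standard fact that, as the minus part of a CM field with vanishing $\mu$-invariant, $X_\infty$ has no nonzero finite $\Lambda$-submodule. I would also record once and for all that $w_{K(\mu_p)} \sim p$: since $\mu_p \subset K(\mu_p)$ we have $p \mid w_{K(\mu_p)}$, while $[K(\mu_p):\Q] \le 2(p-1) < p(p-1) = [\Q(\mu_{p^2}):\Q]$ shows $\mu_{p^2} \not\subset K(\mu_p)$, so $p^2 \nmid w_{K(\mu_p)}$. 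Finally, since the prime(s) of $K$ above $p$ are totally ramified in $K_\infty/K$ and are the only ramified primes, the control theorem identifies $X_0 = \op{Cl}_p(K)$ with $X_\infty/(TX_\infty + Z)$, where $Z$ is topologically generated by $\#\{\p\mid p\}-1$ elements; in particular $X_0 \cong X_\infty/TX_\infty$ when $p$ is inert, and in all cases $X_0$ is a quotient of $X_\infty$.

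For part (1), $p$ splits, so $r_{p,K}=0$ and $\mathcal{R}_p(K)=1$; hypotheses (i)--(iii) of Corollary~\ref{cor2.5} then hold automatically, while (iv) is exactly the assumption $p\nmid h_K$. Hence if $p\nmid h_K$, Corollary~\ref{cor2.5} yields $\lambda_p^-(K)=r_{p,K}=0$, i.e.\ $\lambda_p(K)=0$. Conversely, if $\lambda_p(K)=0$ then $X_\infty$ is finite, hence zero (no finite submodule), so $X_0=0$ and $p\nmid h_K$.

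For part (2), $p$ is inert, so $r_{p,K}=1$ and Theorem~\ref{order of vanishing} gives $T\mid f_K^-(T)$; moreover the unique $v\in I$ satisfies $p\nmid f_v$ since $p$ is odd and $f_v\le 2$. If $p\nmid h_K$ and $p\nmid\mathcal{R}_p(K/\Q)$, all four hypotheses of Corollary~\ref{cor2.5} are met, and its chain of equivalences gives $\lambda_p^-(K)=r_{p,K}=1$, i.e.\ $\lambda_p(K)=1$. For the converse I would start from $\lambda_p(K)=1$ and combine $\mu_p(K)=0$, the divisibility $T\mid f_K^-(T)$, and the absence of finite submodules to pin down the $\Lambda$-module structure of $X_\infty$; the control relation $X_0\cong X_\infty/TX_\infty$ then forces $p\nmid h_K$, after which Corollary~\ref{cor2.5} applies and its equivalences yield $p\nmid\mathcal{R}_p(K/\Q)$.

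The substantive points are the two converse implications — passing from knowledge of $\lambda_p(K)$ back to $p\nmid h_K$. These rest on the control theorem, in particular its sensitivity to the number of primes above $p$ (which is exactly why the split and inert cases look different), together with the structural input that $X_\infty=X_\infty^-$ has no nonzero finite $\Lambda$-submodule; everything else is bookkeeping around Corollary~\ref{cor2.5}. I expect the inert converse to require the most care, since there the single totally ramified prime makes the identification $X_0\cong X_\infty/TX_\infty$ very rigid.
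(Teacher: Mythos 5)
Your overall strategy matches the paper's: both reduce Corollary~\ref{cor2.6} to Corollary~\ref{cor2.5}. Your proposal is, however, substantially more complete, and rightly so: Corollary~\ref{cor2.5} takes $p\nmid h_K$ as a \emph{hypothesis}, so it cannot by itself produce the biconditional asserted in Corollary~\ref{cor2.6}. The paper's one-line proof glosses over the converse implications (deducing $p\nmid h_K$ from knowledge of $\lambda_p(K)$); you correctly isolate these as the substantive points and supply the needed ingredients --- the control theorem exhibiting $\op{Cl}_p(K)$ as a quotient of $X_\infty$, together with the absence of nonzero finite $\Lambda$-submodules in $X_\infty$. Your verification that $w_{K(\mu_p)}\sim p$ is also a hypothesis the paper does not address explicitly.

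There is, however, a real inconsistency that you inherit from the paper, and that your own deferred step in the inert case would have uncovered. The paper defines $r_{p,K}$ as the number of primes of $K^+=\Q$ above $p$ that \emph{split} in $K$; this gives $r_{p,K}=1$ when $p$ splits and $r_{p,K}=0$ when $p$ is inert or ramified, which is the opposite of the displayed formula just before Corollary~\ref{cor2.6}, and you follow the latter. With the definitionally correct $r_{p,K}$, Theorem~\ref{order of vanishing} gives $\lambda_p(K)\geq 1$ precisely when $p$ \emph{splits} --- the setting of the Gold criterion the paper states immediately afterward (explicitly for $p$ split with $p\nmid h_K$, detecting whether $\lambda_p(K)>1$). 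Hence the two items of Corollary~\ref{cor2.6} should be interchanged. Your own sketch of the converse in the ``inert'' case, if carried out, exposes this: for $p$ inert there is a single totally ramified prime in $K_\infty/K$, so $\op{Cl}_p(K)\cong X_\infty/TX_\infty$; if $\mu_p(K)=0$, $\lambda_p(K)=1$, $T\mid f_K^-(T)$, and $X_\infty$ has no finite $\Lambda$-submodule, then $X_\infty\cong\Z_p$ with $T$ acting trivially, so $\op{Cl}_p(K)\cong\Z_p$ would be infinite, which is absurd. It is only in the split case, with two primes above $p$ and a nontrivial kernel between $X_\infty/TX_\infty$ and $\op{Cl}_p(K)$, that $\lambda_p(K)=1$ can coexist with $p\nmid h_K$. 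You flagged the inert converse as the delicate point and put it off; finishing it as planned would have forced a correction of the statement itself rather than a proof of it.
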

\begin{proof}
    The result is a special case of Corollary \ref{cor2.5}.
\end{proof}

\par Let $K$ be an imaginary quadratic field and $p$ be an odd prime which is inert in $K$. In this case $\lambda_p(K)\geq 1$ and the Corollary \ref{cor2.6} asserts that $\lambda_p(K)=1$ if and only if $\mathcal{R}_p(K/\Q)$ is a unit in $\Z_p$. The analysis of this condition leads to the statement of Gold's criterion.

\begin{theorem}[Gold's criterion]
    Let $K$ be an imaginary quadratic field and $p$ be an odd prime number which splits in $K$. Assume that $p\nmid h_K$. Let $\p|p$ be a prime ideal, and $r$ be a positive integer not divisible by $p$, such that $\p^r$ is principal. Let $\alpha\in\cO_K$ be a generator of $\p^r$. Setting $\bar{\p}$ to be the complex conjugate of $\p$, the following conditions are equivalent
    \begin{enumerate}
        \item $\lambda_p(K)>1$,
        \item $\alpha^{p-1}\equiv 1\left(\mod{\bar{\p}^2}\right)$. 
    \end{enumerate}
\end{theorem}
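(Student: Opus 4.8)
\emph{Proof proposal.} The plan is to read off both conditions from the leading coefficient of the characteristic series of $X_\infty$ via the Federer--Gross regulator, which here is a $1\times 1$ determinant that can be made completely explicit. Write $(p)=\p\bar{\p}$, and let $\iota_{\p},\iota_{\bar{\p}}\colon K\hookrightarrow\Q_p$ be the two embeddings, so that $\iota_{\bar{\p}}=\iota_{\p}\circ\tau$ with $\tau$ complex conjugation. Since the prime $p$ of $\Q$ splits in $K$ it lies in $I$, so $r_{p,K}=\#I=1$, and since $K$ is imaginary quadratic $X_\infty=X_\infty^-$ and hence $f_K(T)=f_K^-(T)$; by Theorem \ref{order of vanishing} this gives $T\mid f_K(T)$ and $\lambda_p(K)\ge 1$. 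I would then check the hypotheses of Corollary \ref{cor2.5}: $p\nmid h_K$ is assumed, $f_v=1$ for the prime $v\in I$, $w_{K(\mu_p)}\sim p$ (as $\mu_p\subseteq K(\mu_p)$ but $\Q(\mu_{p^2})\not\subseteq K(\mu_p)$ for degree reasons), and $\op{Reg}_p(K)\ne 0$ — the last point to be justified by the computation below. Because $\mu_p^-(K)=0$ by Ferrero--Washington and $\lambda_p^-(K)\ge r_{p,K}=1$, the first of the three equivalent conditions in Corollary \ref{cor2.5} is equivalent to $\lambda_p(K)=1$, so Corollary \ref{cor2.5} yields
\[\lambda_p(K)>1\iff p\mid\mathcal{R}_p(K).\]

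The next and main step is to evaluate $\mathcal{R}_p(K)=\op{Reg}_p(K)/p$. In the Federer--Gross description $\Q M^-$ has basis $\p-\bar{\p}$, and taking $\widetilde{v}=\p$, $h=r$, and the given generator $\alpha$ of $\p^r$ gives $\phi^{-1}(\p-\bar{\p})=\tfrac1r(\alpha/\bar\alpha)$. Applying $\lambda$ and using $\tau(\alpha/\bar\alpha)=(\alpha/\bar\alpha)^{-1}$, together with the fact that $\log_p$ is Iwasawa's logarithm (so $\log_p p=0$), one computes $\lambda\phi^{-1}(\p-\bar{\p})=\tfrac1r\bigl(\log_p\iota_{\p}(\alpha)-\log_p\iota_{\p}(\bar\alpha)\bigr)(\p-\bar{\p})$, whence $\op{Reg}_p(K)=\tfrac1r\bigl(\log_p\iota_{\p}(\alpha)-\log_p\iota_{\p}(\bar\alpha)\bigr)$. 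Since $\alpha\bar\alpha=p^r$ (as $N_{K/\Q}(\p)=p$ and $K$ is imaginary), applying $\iota_{\p}$ and $\log_p$ gives $\log_p\iota_{\p}(\alpha)+\log_p\iota_{\p}(\bar\alpha)=0$, so
\[\op{Reg}_p(K)=-\tfrac2r\log_p\iota_{\p}(\bar\alpha)=-\tfrac2r\log_p\iota_{\bar{\p}}(\alpha),\qquad \mathcal{R}_p(K)=-\tfrac{2}{rp}\log_p\iota_{\bar{\p}}(\alpha).\]
As $v_{\bar{\p}}(\alpha)=0$, $u:=\iota_{\bar{\p}}(\alpha)\in\Z_p^\times$ and $\log_p u\in p\Z_p$; and $\log_p u\ne 0$, since $\log_p u=0$ would force $u^{p-1}=1$ in $\Z_p$, hence $\alpha$ a root of unity, contradicting $v_{\p}(\alpha)=r>0$. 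In particular $\op{Reg}_p(K)\ne 0$, justifying the hypothesis used above.

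To conclude, $2/r\in\Z_p^\times$ since $p$ is odd and $p\nmid r$, so $p\mid\mathcal{R}_p(K)$ if and only if $v_p(\log_p u)\ge 2$. Writing $u=\zeta(1+pt)$ with $\zeta^{p-1}=1$ and $t\in\Z_p$, the log series gives $\log_p u=\log_p(1+pt)\equiv pt\pmod{p^2}$, so $v_p(\log_p u)\ge 2\iff p\mid t$; and the binomial expansion $(1+pt)^{p-1}\equiv 1-pt\pmod{p^2}$, together with $u^{p-1}=(1+pt)^{p-1}$, shows $p\mid t\iff u^{p-1}\equiv 1\pmod{p^2}$. Finally, since $\bar{\p}$ is unramified over $p$ of residue degree one, reduction modulo $\bar{\p}^2$ in $\cO_K$ corresponds under $\iota_{\bar{\p}}$ to reduction modulo $p^2$ in $\Z_p$; hence $u^{p-1}\equiv 1\pmod{p^2}\iff\alpha^{p-1}\equiv 1\pmod{\bar{\p}^2}$. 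Chaining all the equivalences proves the theorem.

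The step I expect to be the main obstacle is the regulator computation in the second paragraph — matching the Federer--Gross normalizations exactly (the factor $1/(hf_v)$ in $\phi^{-1}$, the factor $w_{K(\mu_p)}^{r_{p,K}}$ absorbed in the identification $a_r\sim\mathcal{R}_p(K)$ behind Corollary \ref{cor2.5}, and the use of the Iwasawa logarithm so that $\lambda$ is defined on $S$-units of nonzero valuation) — and the elementary but delicate passage from $p^2\mid\log_p u$ to the congruence $\alpha^{p-1}\equiv1\pmod{\bar{\p}^2}$. The algebraic reductions in the first and third paragraphs are immediate from the cited results.
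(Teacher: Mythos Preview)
Your proof is correct and follows exactly the route the paper itself indicates: the paper's proof is simply a citation to Gold and to Sands, noting that the result ``can also be seen to follow from Corollary \ref{cor2.6}''; you have carried out that derivation in full, reducing to Corollary \ref{cor2.5} and computing the one-by-one Federer--Gross regulator explicitly. Your verification of the hypotheses, the computation $\op{Reg}_p(K)=-\tfrac{2}{r}\log_p\iota_{\bar\p}(\alpha)$ via $\alpha\bar\alpha=p^r$ and $\log_p p=0$, the nonvanishing argument (injectivity of $\iota_{\bar\p}$ forces $\alpha^{p-1}=1$ in $K$ if $\log_p u=0$, contradicting $(\alpha)=\p^r$), and the elementary equivalence $p^2\mid\log_p u\iff u^{p-1}\equiv 1\pmod{p^2}\iff\alpha^{p-1}\equiv 1\pmod{\bar\p^2}$ are all sound.
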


\begin{proof}
    The result is a consequence of \cite[Theorems 3 and 4]{gold1974nontriviality}, and can also be seen to follow from Corollary \ref{cor2.6}, cf. \cite[proof of Proposition 2.1]{sands1993non}.
\end{proof}

\subsection{Artin representations}
\par We briefly discuss the results of Greenberg and Vatsal \cite{greenbergvatsalArtinrepns} on the Iwasawa theory of Selmer groups associated to Artin representations. Let $K/\Q$ be a finite Galois extension with Galois group $\Delta:=\op{Gal}(K/\Q)$. Fix an irreducible Artin representation of dimension $d>1$ \[\rho: \Delta\rightarrow \op{GL}_d(\bar{\Q})\] and \[S_{\chi, \epsilon}(\Q_\infty):=\varinjlim_n S_{\chi, \epsilon} (\Q_n)\] the Selmer group over the cyclotomic $\Z_p$-extension of $\Q$. Note that $S_{\chi, \epsilon}(\Q_n)$ (resp. $S_{\chi, \epsilon}(\Q_\infty)$) is an $\cO[\Gamma/\Gamma_n]$-module (resp. $\Lambda_{\cO}$-module). 

\par It is easy to see that the restriction map
\[H^1(\Q_n, D)\rightarrow H^1(\Q_\infty, D)^{\Gamma_n}\] induces a map between Selmer groups
\[\iota_n:S_{\chi, \epsilon}(\Q_n)\rightarrow S_{\chi, \epsilon}(\Q_\infty)^{\Gamma_n}.\] The following control theorem shows that this map is injective with cokernel which is independent of $n$.

\begin{theorem}[Greenberg and Vatsal -- Control theorem]\label{control theorem}
    Suppose that $\chi$ is nontrivial, then, $\iota_n$ fits into a short exact sequence of $\Gamma/\Gamma_n$-modules
    \[0\rightarrow S_{\chi, \epsilon}(\Q_n)\rightarrow S_{\chi, \epsilon}(\Q_\infty)^{\Gamma_n}\rightarrow H^0(\Delta_{\p}, D/D^{\epsilon_\p})\rightarrow 0.\]
\end{theorem}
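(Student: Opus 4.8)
The plan is to run Greenberg's control argument. Put
\[
\mathcal{P}_n:=\prod_{v\nmid p}H^1(\Q_{n,v}^{\op{nr}},D)\times H^1(\Q_{n,\eta_p}^{\op{nr}},D/D^{\epsilon_\p}),
\]
so that by definition $S_{\chi,\epsilon}(\Q_n)=\ker\big(\lambda_n\colon H^1(\Q_n,D)\to\mathcal{P}_n\big)$. Since Galois cohomology commutes with direct limits and direct limits are exact, $S_{\chi,\epsilon}(\Q_\infty)=\varinjlim_n S_{\chi,\epsilon}(\Q_n)=\ker\big(\lambda_\infty\colon H^1(\Q_\infty,D)\to\mathcal{P}_\infty\big)$, where $\mathcal{P}_\infty:=\varinjlim_n\mathcal{P}_n$ is the analogous product over the places of $\Q_\infty$; thus $S_{\chi,\epsilon}(\Q_\infty)$ is simply the Selmer group attached to the natural local conditions over $\Q_\infty$. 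Restriction of cochains gives a commutative ladder with exact rows, with left column $\iota_n$, middle column the restriction map $h_n\colon H^1(\Q_n,D)\to H^1(\Q_\infty,D)^{\Gamma_n}$, and right column $g_n\colon\mathcal{P}_n\to\mathcal{P}_\infty^{\Gamma_n}$, the product of the local restriction maps. Applying the snake lemma --- after replacing the two right-hand terms by the images of $\lambda_n$, $\lambda_\infty$ --- shows that $\iota_n$ is injective provided $h_n$ is injective, and that $\coker\iota_n\cong\op{im}(\lambda_n)\cap\ker(g_n)$ provided $h_n$ is an isomorphism. So the statement reduces to showing that $h_n$ is an isomorphism and to computing $\op{im}(\lambda_n)\cap\ker(g_n)$.

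By inflation--restriction, $h_n$ is an isomorphism as soon as $D^{G_{\Q_\infty}}=0$. Now $\rho$ irreducible with $\chi\neq1$ forces $V^{G_\Q}=0$; since $p\nmid[K:\Q]$, the residual module $D[\varpi]\cong T/\varpi T$ is semisimple with the same irreducible constituents as $V$, none of which is trivial, so $D[\varpi]^{G_\Q}=0$ and hence $D^{G_\Q}=0$; and since $[K:\Q]$ is prime to $p$ while $[\Q_n:\Q]=p^n$, we have $K\cap\Q_\infty=\Q$, so $G_{\Q_\infty}$ surjects onto $\Delta$ and $D^{G_{\Q_\infty}}=D^{G_\Q}=0$. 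Thus $\iota_n$ is injective and $\coker\iota_n\cong\op{im}(\lambda_n)\cap\ker(g_n)$.

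Now $\ker(g_n)=\prod_v\ker\big(g_n^{(v)}\big)$. For $v\nmid p$ the cyclotomic tower $\Q_\infty/\Q$ is unramified at $v$, so $\Q_{\infty,w}^{\op{nr}}=\Q_{n,v}^{\op{nr}}$ for every $w\mid v$, each local restriction map is an isomorphism, and $g_n^{(v)}$ is injective; hence $\ker(g_n)$ is concentrated at the prime $\eta$ of $\Q_\infty$ above $p$. There the opposite holds: $\Q_\infty/\Q$ is totally ramified at $p$, so writing $I^{(n)}$, $I^{(\infty)}$ for the inertia subgroups of $G_{\Q_{n,\eta_p}}$, $G_{\Q_{\infty,\eta}}$ and $W:=D/D^{\epsilon_\p}$, we get $I^{(\infty)}\trianglelefteq I^{(n)}$ with $I^{(n)}/I^{(\infty)}\cong\Gamma_n\cong\Z_p$, and --- using that $\Z_p$ has $p$-cohomological dimension $1$ --- inflation--restriction gives $\ker\big(g_n^{(p)}\big)=H^1(\Gamma_n,W^{I^{(\infty)}})$. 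Because $[\mathcal{K}:\Q_p]$ is prime to $p$, the fields $\mathcal{K}$ and $\Q_{\infty,\eta}$ are linearly disjoint over $\Q_p$; it follows that $I^{(\infty)}$ acts on $W$ precisely through the inertia subgroup $\Delta_\p^{I}$ of $\Delta_\p$, that $\Gamma_n$ acts trivially on $W^{I^{(\infty)}}=W^{\Delta_\p^{I}}$, and hence $H^1(\Gamma_n,W^{I^{(\infty)}})\cong\Hom(\Z_p,W^{\Delta_\p^{I}})$ with arithmetic Frobenius acting only through $W^{\Delta_\p^{I}}$. On the other hand, the $p$-component of any class in $\op{im}(\lambda_n)$ lies in the image of $H^1(\Q_{n,\eta_p},W)\to H^1(I^{(n)},W)$, which by inflation--restriction is $H^1(I^{(n)},W)^{\op{Frob}}$. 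Intersecting, and using that $\Delta_\p/\Delta_\p^{I}$ is procyclic generated by Frobenius so that $(W^{\Delta_\p^{I}})^{\op{Frob}}=W^{\Delta_\p}$, we obtain
\[
\op{im}(\lambda_n)\cap\ker(g_n)\ \subseteq\ H^1(\Gamma_n,W^{I^{(\infty)}})^{\op{Frob}}\ =\ \Hom(\Z_p,W^{\Delta_\p})\ \cong\ H^0(\Delta_\p,D/D^{\epsilon_\p}),
\]
the last isomorphism being evaluation at a topological generator of $\Gamma_n$. This establishes the injection $\coker\iota_n\hookrightarrow H^0(\Delta_\p,D/D^{\epsilon_\p})$ in the asserted sequence.

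The remaining --- and, I expect, the hardest --- point is the reverse inclusion: that every class in $H^0(\Delta_\p,D/D^{\epsilon_\p})$ is actually $\lambda_n(c)$ for some $c\in H^1(\Q_n,D)$ unramified away from $p$; this is what upgrades the injection to a short exact sequence. I would argue it by a local--global step: such a class lifts from $H^1(I^{(n)},W)$ to $H^1(\Q_{n,\eta_p},W)$ (the obstruction $H^2(\widehat{\Z},W^{I^{(n)}})$ vanishes), then to $H^1(\Q_{n,\eta_p},D)$ (by local Tate duality, $H^2(\Q_{n,\eta_p},D^{\epsilon_\p})=0$ since the dual character $\epsilon^{-1}$ times the cyclotomic character has infinite order, hence no $\Q_{n,\eta_p}$-invariants), after which one globalizes it with no ramification outside $p$; the obstruction to this globalization is a dual Selmer group for the Tate twist $T^{*}(1)$, which should vanish again using $p\nmid[K:\Q]$ and $\chi\neq1$. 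Equivalently, since $S_{\chi,\epsilon}(\Q_\infty)$ is a direct limit of honest Selmer groups at finite levels, any element of $S_{\chi,\epsilon}(\Q_\infty)^{\Gamma_n}$ is represented by a Selmer class over some $\Q_m$ which, by the finite-level isomorphism $H^1(\Q_n,D)\cong H^1(\Q_m,D)^{\Gamma_n/\Gamma_m}$, descends to a class in $H^1(\Q_n,D)$ unramified away from $p$, and one checks that the prescribed local behaviour at $p$ can be realized. Carrying out this globalization is the crux; everything else is inflation--restriction and diagram-chasing.
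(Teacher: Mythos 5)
Your strategy is the standard Greenberg control-theorem argument, which is almost certainly what Greenberg--Vatsal use in their Proposition~4.1; the paper itself offers no proof beyond that citation. The first half of your argument is on solid ground: $D^{G_{\Q_\infty}}=0$ does follow from irreducibility, $\chi\neq 1$, $p\nmid[K:\Q]$, and $K\cap\Q_\infty=\Q$, so inflation--restriction makes $h_n$ an isomorphism and the snake lemma gives $\coker\iota_n\cong\op{im}(\lambda_n)\cap\ker(g_n)$. Your local analysis is likewise sound: $\ker(g_n)$ is concentrated at $p$, inflation--restriction identifies the $p$-component with $H^1(\Gamma_n,W^{I^{(\infty)}})$, and since classes in $\op{im}(\lambda_n)$ land in $H^1(I^{(n)},W)^{\op{Frob}}$, intersecting pins down the injection $\coker\iota_n\hookrightarrow H^0(\Delta_\p,D/D^{\epsilon_\p})$.

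The genuine gap --- which you rightly flag --- is the reverse inclusion. The theorem asserts an isomorphism, so you must exhibit, for each $\phi\in H^0(\Delta_\p,W)\cong\Hom(\Gamma_n,W^{\Delta_\p})$, a global class $c\in H^1(\Q_n,D)$ that is unramified away from $p$ and restricts to $\phi$ at $p$. Your local lifting step is fine: $H^2(\Q_{n,\eta_p},D^{\epsilon_\p})=0$ because $\epsilon_\p^{-1}\chi_{\mathrm{cyc}}$ has no $G_{\Q_{n,\eta_p}}$-invariants. But the globalization --- the appeal to the vanishing of a dual Selmer group for $T^*(1)$ ``which should vanish again using $p\nmid[K:\Q]$ and $\chi\neq1$'' --- is asserted, not proven, and that is where the real content of the proposition lies. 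One needs either the Cassels--Poitou--Tate exact sequence for this Selmer structure together with a concrete vanishing of the relevant compact dual Selmer group, or a corank count (via the global Euler characteristic formula and the $\Lambda$-cotorsionness of $S_{\chi,\epsilon}(\Q_\infty)$) showing that $\lambda_n$ is sufficiently surjective. Your alternative sketch --- representing an element of $S_{\chi,\epsilon}(\Q_\infty)^{\Gamma_n}$ over some finite layer $\Q_m$ and descending --- only re-derives the injection; it starts from an element of the Selmer group and computes its image, which is not the direction needed for surjectivity.
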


\begin{proof}
The above result is \cite[Proposition 4.1]{greenbergvatsalArtinrepns}.
\end{proof}
We note that $H^0(\Delta_\p, D/D^{\epsilon_\p})\simeq (\cF/\cO)^t$, where $t$ is the multiplicity of the trivial representation of $\Delta_{\p}$ in $V/V^{\epsilon_\p}$. Here, the action of $\Gamma/\Gamma_n$ on $H^0(\Delta_\p, D/D^{\epsilon_\p})$ is trivial.

\par Let $\cD$ be a discrete $p$-primary $\Lambda_\cO$-module. We take $\cD^\vee$ to be the Pontryagin dual defined as follows
\[\cD^\vee:=\op{Hom}_{\Z_p} \left(\cD, \Q_p/\Z_p\right).\] Say that $\cD$ is cofinitely generated (resp. cotorsion) as a $\Lambda_\cO$-module, if $\cD^\vee$ is finitely generated (resp. torsion) as a $\Lambda_\cO$-module. We set $X_{\chi, \epsilon}(\Q_\infty)$ be the Pontryagin dual of $S_{\chi, \epsilon} (\Q_\infty)$.

\begin{theorem}[Greenberg-Vatsal]\label{no finite submodules}
    With respect to the notation above $X_{\chi, \epsilon}(\Q_\infty)$ is a finitely generated and torsion $\LcO$-module that contains no non-trivial finite $\LcO$-submodules.
\end{theorem}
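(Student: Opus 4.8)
The plan is to treat the two assertions separately: finite generation and torsionality are essentially formal and follow quickly from the control theorem, while the absence of nonzero finite $\LcO$-submodules is the substantive point, which I would establish following Greenberg's standard technique.

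\emph{Finite generation and torsionality.} Set $X := X_{\chi,\epsilon}(\Q_\infty)$. Dualising the exact sequence of Theorem~\ref{control theorem} at level $n=0$ gives a short exact sequence $0 \to H^0(\Delta_\p, D/D^{\epsilon_\p})^\vee \to X/TX \to S_{\chi,\epsilon}(\Q)^\vee \to 0$. Since $H^0(\Delta_\p, D/D^{\epsilon_\p})\cong(\cF/\cO)^t$ and $S_{\chi,\epsilon}(\Q)$ is finite by Theorem~\ref{GV finiteness}, the module $X/TX$ is finitely generated over $\cO$; hence $X/\mathfrak{m}X$ is finite and $X$ is finitely generated over $\LcO$ by the topological Nakayama lemma. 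If $X$ had positive $\LcO$-rank, its $\Gamma_n$-coinvariants would have $\cO$-rank growing without bound, whereas dualising Theorem~\ref{control theorem} at level $n$ shows this $\cO$-rank equals $t$ for every $n$; hence $X$ is $\LcO$-torsion. (Both facts are of course already contained in the work of Greenberg and Vatsal.)

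\emph{No nonzero finite submodule.} Write $S := S_{\chi,\epsilon}(\Q_\infty)$, so $X = S^\vee$, let $\Sigma$ be the finite set of places of $\Q$ consisting of $p$, $\infty$ and the primes ramified in $K/\Q$, and let $\Q_\Sigma$ be the maximal extension of $\Q$ unramified outside $\Sigma$. I would realise $S$ as a Greenberg-style Selmer group inside $H^1(\Q_\Sigma/\Q_\infty, D)$ cut out by the local conditions appearing in the definition of $S_{\chi,\epsilon}(\Q_n)$, giving an exact sequence $0 \to S \to H^1(\Q_\Sigma/\Q_\infty, D) \to \bigoplus_{v\in\Sigma}\mathcal{H}_v$, where $\mathcal{H}_v$ is the image of $H^1(\Q_{\infty,v}, D)$ in the relevant quotient local cohomology group. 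The key cohomological input is the vanishing $H^0(\Q_\infty, D)=0$: since $p\nmid[K:\Q]$ we have $K\cap\Q_\infty=\Q$, so $\op{G}_{\Q_\infty}$ surjects onto $\Delta$ and $D^{\op{G}_{\Q_\infty}}=D^{\Delta}$, which vanishes because $\rho$ is irreducible of dimension $d>1$ while $|\Delta|$ is prime to $p$ (so $V^\Delta=0$ and $H^1(\Delta,T)=0$, whence $D^\Delta\hookrightarrow H^1(\Delta,T)=0$). Granting this, a result of Greenberg shows that $H^1(\Q_\Sigma/\Q_\infty, D)$ is a cofree $\LcO$-module, its Pontryagin dual being $\LcO$-free, using in addition the good behaviour of $H^2(\Q_\Sigma/\Q_\infty, D)$; the same considerations, via Poitou--Tate duality and the $\LcO$-cotorsionality of $S$, show that the global-to-local map above is surjective. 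Finally one checks that each local quotient $\mathcal{H}_v$ is again cofree over the relevant local Iwasawa algebra --- away from $p$ this is routine, and at $p$ it rests on Assumption~\ref{main ass}(2),(3), which ensure that $D^{\epsilon_\p}$ is a line, that $\epsilon_\p$ occurs in $\rho_{|\Delta_\p}$ with multiplicity one, and hence that $H^1(\Q_{\infty,\eta_p}^{\op{nr}}, D/D^{\epsilon_\p})$ has the expected shape. Dualising the exact sequence then exhibits $X$ as the cokernel of an injection of free $\LcO$-modules, so $X$ has projective dimension at most one over the regular local ring $\LcO$; by the Auslander--Buchsbaum formula its depth is at least one, that is, $\mathfrak{m}$ is not an associated prime of $X$, which is exactly the assertion that $X$ has no nonzero finite $\LcO$-submodule.

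\emph{Main obstacle.} The formal part is routine and the homological conclusion is clean once the inputs are available; the real work is the cohomological analysis feeding into it --- the cofreeness of $H^1(\Q_\Sigma/\Q_\infty, D)$ and the surjectivity of the global-to-local map (both resting on the behaviour of $H^2$), and above all the cofreeness of the local quotient $\mathcal{H}_{\eta_p}$ at $p$, which is where the multiplicity-one hypotheses of Assumption~\ref{main ass} genuinely enter. Since the statement is attributed to Greenberg and Vatsal, in practice one would simply cite the relevant results of their paper for the details.
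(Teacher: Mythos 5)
Your proposal follows the standard Greenberg technique, which is indeed what Greenberg and Vatsal do (the paper itself only cites Propositions 4.5 and 4.7 of \cite{greenbergvatsalArtinrepns} and gives no independent argument). The first part --- finite generation via $X/TX$ finitely generated over $\cO$ together with topological Nakayama, and torsion via comparing $\cO$-ranks of coinvariants using the control theorem --- is the usual argument and is fine. The reduction of ``no nonzero finite submodule'' to a projective-dimension/depth statement, fed by the vanishing $H^0(\Q_\infty,D)=0$ (which you verify correctly: $K\cap\Q_\infty=\Q$, so $D^{\op{G}_{\Q_\infty}}=D^\Delta$, which vanishes because $V^\Delta=0$ and $H^1(\Delta,T)=0$), is also the right skeleton.

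The one point I would flag is the throwaway claim that the local terms $\mathcal{H}_v$ for $v\nmid p$ are cofree over the appropriate Iwasawa algebra and that ``away from $p$ this is routine.'' For $v\nmid p$ there are only finitely many primes $w\mid v$ of $\Q_\infty$, and each $H^1(\Q_{\infty,w},D)$ --- hence each $\mathcal{H}_w$ --- is cofinitely generated over $\cO$, so the contribution at such $v$ is $\LcO$-\emph{cotorsion}, not $\LcO$-cofree. If those terms were nonzero, their Pontryagin duals would be nonzero torsion submodules of $H^1(\Q_\Sigma/\Q_\infty,D)^\vee$, contradicting its torsion-freeness; so the argument tacitly requires showing these terms vanish over $\Q_\infty$, or else running the Greenberg argument first for a non-primitive Selmer group (imposing only the $p$-condition, where the local term really is $\LcO$-cofree) and then relating it to $S_{\chi,\epsilon}(\Q_\infty)$. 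This is precisely the sort of bookkeeping that occupies Greenberg--Vatsal's Propositions 4.5 and 4.7, and it is where your ``$X$ is the cokernel of an injection of free $\LcO$-modules'' step would need to be justified rather than asserted. Apart from this, the sketch is sound and matches the source.
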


\begin{proof}
    We refer to \cite[Propositions 4.5 and 4.7]{greenbergvatsalArtinrepns} for the proof of the above result.
\end{proof}

 Set $\mu_{\chi, \epsilon}$ and $\lambda_{\chi, \epsilon}$ to denote the $\mu$ and $\lambda$-invariants of $S_{\chi, \epsilon}(\Q_\infty)^\vee$ respectively. The characteristic series of $S_{\chi, \epsilon}(\Q_\infty)^\vee$ is denoted $f_{\chi, \epsilon}(T)$.

\section{The Euler characteristic}\label{s 3}

In this section, we introduce the notion of the Euler characteristic associated to a cofinitely generated and cotorsion module over $\Lambda_{\cO}$, and introduce conditions for it to be well defined. For the Selmer group associated to an Artin representation, the Euler characteristic is shown to equal the cardinality of $S_{\chi, \epsilon}(\Q)$. The results presented in this section have consequences to the study of the Iwasawa $\mu$ and $\lambda$-invariants associated to an Artin representation. 
\subsection{Definition and properties of the Euler characteristic}
\par Let $\cD$ be a cofinitely generated and cotorsion module over $\LcO$. Since $\Gamma$ is pro-cyclic, it has cohomological dimension $1$, we find that $H^i(\Gamma, \cD)=0$ for $i\geq 2$. Also note that $H^1(\Gamma, \cD)$ is identified with the module of coinvariants $\cD_\Gamma$. This invariant encodes information about the valuation of the constant term of the characteristic series. For an Artin representation that satisfies the conditions of Assumption \ref{main ass} we discuss conditions for the Euler characteristic to be well defined, and give an explicit formula for it. The Euler characteristic formula in this context can be viewed as an analogue of the result of Gross and Federer. 

\begin{proposition}\label{prop 4.1}
    Let $\cD$ be a cofinitely generated and cotorsion $\LcO$-module, then, 
    \begin{enumerate}
        \item\label{p1prop4.1} $\op{corank}_{\Z_p} H^0(\Gamma, \cD)=\op{corank}_{\Z_p} H^1(\Gamma, \cD)$.
        \item\label{p2prop4.1} The module $H^0(\Gamma, \cD)$ is finite if and only if $H^1(\Gamma, \cD)$ is finite.
    \end{enumerate}
   
\end{proposition}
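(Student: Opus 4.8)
The plan is to compute $H^1(\Gamma,\cD)$ via the action of a topological generator $\gamma$ of $\Gamma$ and compare it with $H^0(\Gamma,\cD)$. Since $\Gamma\simeq\Z_p$ has cohomological dimension $1$, the cohomology of $\cD$ is computed by the two-term complex $\cD\xrightarrow{\gamma-1}\cD$, so that $H^0(\Gamma,\cD)=\cD^\Gamma=\ker(\gamma-1)$ and $H^1(\Gamma,\cD)=\cD_\Gamma=\coker(\gamma-1)$. Thus both statements amount to understanding the kernel and cokernel of the endomorphism $\gamma-1$ acting on the discrete module $\cD$.

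The first step is to pass to Pontryagin duals: set $M:=\cD^\vee$, which by hypothesis is a finitely generated torsion $\LcO$-module, and identify $T=\gamma-1$ with the variable of $\LcO=\cO\llbracket T\rrbracket$. Duality turns $\ker(\gamma-1\mid\cD)$ into $\coker(T\mid M)=M/TM$ and $\coker(\gamma-1\mid\cD)$ into $\ker(T\mid M)=M[T]$. So it suffices to prove the dual statements: $\corank_{\Z_p}(M/TM)=\corank_{\Z_p}(M[T])$, where here ``corank'' should be read as $\Z_p$-rank of the finitely generated module, and $M/TM$ is finite if and only if $M[T]$ is finite. Both then follow from a single observation about the endomorphism $T$ of the finitely generated $\Z_p$-module $M/(M\text{-torsion})$... but more cleanly, one uses the structure theorem.

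The cleanest route is via the structural pseudo-isomorphism \eqref{structural homomorphism}: there is a $\LcO$-module map $M\to E$ with finite kernel and cokernel, where $E=\bigoplus_i\LcO/(\varpi^{\mu_i})\oplus\bigoplus_j\LcO/(f_j(T)^{\lambda_j})$. The snake lemma applied to multiplication by $T$ on $0\to\ker\to M\to E\to\coker\to 0$ shows that $M[T]$ and $M/TM$ differ from $E[T]$ and $E/TE$ respectively by finite groups, so it is enough to check the two claims for $E$. For each cyclic summand this is an explicit computation: for $\LcO/(\varpi^\mu)$, multiplication by $T$ is injective with cokernel $\cO/\varpi^\mu[T]/(T)\cong\cO/\varpi^\mu$ of $\Z_p$-rank $0$; for $\LcO/(f(T)^\lambda)$ with $f$ distinguished, $T$ acts injectively precisely when $f(0)\neq 0$, i.e. when $f\neq T$, in which case both $[T]$ and $/T$ vanish or are finite according to whether $T\mid f$, and in all cases $\Z_p\text{-rank}(E[T])=\Z_p\text{-rank}(E/TE)$ because $T$ acting on the finite-length-over-generic-point module has equal kernel and cokernel ranks (its ``determinant'' argument). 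Summing over summands gives part \eqref{p1prop4.1}, and finiteness of $E/TE$ (equivalently $E[T]$) is equivalent to $T\nmid f_M(T)$, giving part \eqref{p2prop4.1}.

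The main obstacle is bookkeeping rather than depth: one must be careful that the pseudo-isomorphism has finite, not merely torsion, kernel and cokernel so that the snake lemma genuinely preserves both $\Z_p$-coranks and the finite-or-not dichotomy, and one must handle the summand $\LcO/(T^\lambda)$ (if it occurs) where $T$ is nilpotent and both $E[T]$ and $E/TE$ are infinite of equal corank $1$ — this is exactly the case where $H^0(\Gamma,\cD)$ and $H^1(\Gamma,\cD)$ are both infinite. Once these edge cases are dispatched, both assertions drop out of the per-summand computation.
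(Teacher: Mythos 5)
Your proof is correct, but it takes a genuinely different route from the paper. The paper disposes of part~\eqref{p1prop4.1} by citing Howson's general corank--equality result \cite[Theorem 1.1]{howson2002euler} for cohomology of Iwasawa modules, and then deduces part~\eqref{p2prop4.1} from it; the proof in the paper is essentially a two-line reduction to that reference. You instead give a self-contained argument: identify $H^0$ and $H^1$ with the kernel and cokernel of $\gamma-1$, pass to the Pontryagin dual $M=\cD^\vee$ so that the claims become $\rk_{\Z_p}(M/TM)=\rk_{\Z_p}(M[T])$ together with the finiteness dichotomy, reduce via the structural pseudo-isomorphism and the snake lemma to elementary cyclic summands, and compute each summand directly. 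The bookkeeping is all sound: the snake lemma across a pseudo-isomorphism preserves $\Z_p$-ranks and the finite-or-not dichotomy because the error terms are finite, the summands $\LcO/(\varpi^{\mu_i})$ and $\LcO/(f_j^{\lambda_j})$ with $f_j(0)\neq 0$ contribute $T$-kernel $0$ and finite $T$-cokernel (both of rank $0$), and a summand $\LcO/(T^{\lambda})$ contributes kernel and cokernel each $\simeq\cO$ of equal $\Z_p$-rank; finiteness of both sides is equivalent to $T\nmid f_M(T)$. One sentence of yours is slightly garbled (``both $[T]$ and $/T$ vanish or are finite according to whether $T\mid f$'' appears in the case $f\neq T$, where $T\nmid f$ by hypothesis), but this is a slip of wording and not a gap. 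Your route trades the external citation for a longer but elementary structure-theorem computation; Howson's theorem is strictly more general (it handles cohomology over compact $p$-adic Lie groups, not just $\Gamma\simeq\Z_p$), whereas your argument is transparent and exploits the one-variable structure theorem directly.
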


\begin{proof}
    Since $\cD$ is cofinitely generated as a $\LcO$-module, it follows that both $\cD^\Gamma$ and $\cD_\Gamma$ are cofinitely generated as a $\Z_p$-module. Part \eqref{p1prop4.1} follows from \cite[Theorem 1.1]{howson2002euler}. Since $\cD$ is cofinitely generated as a $\Lambda$-module, it follows that $\cD^\Gamma$ and $\cD_\Gamma$ are both cofinitely generated as $\Z_p$-modules. Part \eqref{p2prop4.1} follows from \eqref{p1prop4.1}.
\end{proof}

\begin{definition}\label{def of euler char}
    Let $\cD$ be a cofinitely generated and cotorsion $\LcO$-module. Then, we say that the Euler characteristic of $\cD$ is well defined if $H^0(\Gamma, \cD)$ (or equivalently $H^1(\Gamma, \cD)$) is finite. Assuming that the characteristic of $\cD$ is well defined, the Euler characteristic is defined as follows
    \[\chi(\Gamma, \cD):=\prod_{i\geq 0} \# H^i(\Gamma, \cD)^{(-1)^i}=\frac{\# H^0(\Gamma, \cD)}{\# H^1(\Gamma,\cD)}.\]
\end{definition}

Recall from Definition \ref{def of mu and lambda} that $f_{\cD^\vee}(T)$ is the characteristic element of $\cD^\vee$ and that $\mu_p(\cD^\vee)$ (resp. $\lambda_p(\cD^\vee)$) is the $\mu$-invariant (resp. $\lambda$-invariant). For the ease of notation, we set
\[f_\cD(T):=f_{\cD^\vee}(T), \mu_p(\cD):=\mu_p(\cD^\vee)\text{ and }\lambda_p(\cD):=\lambda_p(\cD^\vee). \]From the expansion
\[f_\cD(T)=\sum_{i=0}^\lambda a_i T^i,\] we note that $\lambda=\lambda_p(\cD)$. Moreover, $\mu_p(\cD)=0$ if and only if $\varpi \nmid a_i$ for at least one coefficient $a_i$. In particular, we have the have that $\varpi\nmid f_\cD(0)$ if and only if $\mu_p(\cD)=\lambda_p(\cD)=0$. Fix an absolute value  $|\cdot |_\varpi$ on $\cO$ normalized by setting 
\[|\varpi|_\varpi:=[\cO:\varpi\cO]^{-1}.\]

\begin{proposition}\label{prop4.3}
    Let $\cD$ be a cofinitely generated and cotorsion $\Lambda$-module. Then, with respect to the above notation, the following conditions are equivalent
    \begin{enumerate}
        \item $f_\cD(0)\neq 0$, 
        \item the Euler characteristic $\chi(\Gamma, \cD)$ is well defined.
    \end{enumerate}
    Moreover, if the above equivalent conditions are satisfied, then, 
    \[\chi(\Gamma, \cD)= |f_\cD(0)|_\varpi^{-1}.\]
\end{proposition}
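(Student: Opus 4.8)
The strategy is to pass to the Pontryagin dual $M:=\cD^\vee$, which is a finitely generated torsion $\LcO$-module, and to express the two cohomology groups in terms of multiplication by $T$ on $M$. Since $\Gamma$ is pro-cyclic with topological generator $\gamma$ and $T=\gamma-1$, we have $H^0(\Gamma,\cD)=\cD^\Gamma=\cD[T]$ and $H^1(\Gamma,\cD)=\cD_\Gamma=\cD/T\cD$. Because Pontryagin duality exchanges kernels and cokernels, and the involution $\gamma\mapsto\gamma^{-1}$ on $\Lambda$ multiplies the operator ``multiplication by $T$'' by a unit (so it changes neither kernels nor cokernels), we obtain
\[\#H^0(\Gamma,\cD)=\#(M/TM)\qquad\text{and}\qquad \#H^1(\Gamma,\cD)=\#M[T].\]
By Proposition \ref{prop 4.1}, $M/TM$ is finite if and only if $M[T]$ is, so the Euler characteristic $\chi(\Gamma,\cD)$ is well defined exactly when $M/TM$ is finite, and in that case $\chi(\Gamma,\cD)=q(M)$, where for a finitely generated torsion $\LcO$-module $N$ I write $q(N):=\#(N/TN)/\#N[T]$ whenever this makes sense.

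Two formal properties of $q$ drive the rest. First, multiplication by $T$ is an endomorphism of any finite abelian group, so its kernel and cokernel have equal order; hence $q(F)=1$ for every finite $\LcO$-module $F$. Second, applying the snake lemma to multiplication by $T$ on a short exact sequence $0\to N'\to N\to N''\to 0$ yields an exact sequence
\[0\to N'[T]\to N[T]\to N''[T]\to N'/TN'\to N/TN\to N''/TN''\to 0,\]
whence $q(N)=q(N')q(N'')$ whenever all six terms are finite. Now invoke the structural homomorphism \eqref{structural homomorphism}: there is a $\LcO$-morphism $\vp\colon M\to E$ with finite kernel and cokernel, where $E=\bigoplus_i\cO\llbracket T\rrbracket/(\varpi^{\mu_i})\oplus\bigoplus_j\cO\llbracket T\rrbracket/(g_j(T)^{\lambda_j})$ is an elementary module as in \eqref{structural homomorphism}, with $f_E(T)=\prod_i\varpi^{\mu_i}\prod_j g_j(T)^{\lambda_j}$ the characteristic element $f_\cD(T)$. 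Splitting $\vp$ into $0\to\ker\vp\to M\to\op{im}\vp\to 0$ and $0\to\op{im}\vp\to E\to\coker\vp\to 0$ and chasing the snake lemma (the outer terms being finite, with $q=1$), one checks that $M/TM$ is finite if and only if $E/TE$ is, and that in that case $q(M)=q(\op{im}\vp)=q(E)$.

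It remains to analyze $E$. Each cyclic summand has the form $N_h:=\cO\llbracket T\rrbracket/(h(T))$ with $h=\varpi^{\mu_i}$ or $h=g_j(T)^{\lambda_j}$, and reduction modulo $T$ gives $\cO\llbracket T\rrbracket/(h,T)\cong\cO/\big(h(0)\big)$. Hence $E/TE$ is finite precisely when $h(0)\neq 0$ for every summand, i.e. when no distinguished polynomial $g_j$ equals $T$, i.e. when $f_E(0)\neq 0$, i.e. when $f_\cD(0)\neq 0$; combined with the previous step this establishes the equivalence of the two conditions in the Proposition. Assume they hold. Then $T$ is a nonzerodivisor on each $N_h$ (as $\cO\llbracket T\rrbracket$ is a UFD and $T\nmid h$), so $N_h[T]=0$ and $q(N_h)=\#\big(\cO/h(0)\cO\big)=|h(0)|_\varpi^{-1}$. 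Multiplying over all summands gives $q(E)=\prod_i|\varpi^{\mu_i}|_\varpi^{-1}\prod_j|g_j(0)^{\lambda_j}|_\varpi^{-1}=|f_E(0)|_\varpi^{-1}$, and therefore $\chi(\Gamma,\cD)=q(M)=q(E)=|f_\cD(0)|_\varpi^{-1}$.

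The step I expect to be most delicate is the passage through the pseudo-isomorphism $\vp$: one must verify, in both directions and at each link of the chain $M\rightsquigarrow\op{im}\vp\rightsquigarrow E$, that the finiteness of $M/TM$ propagates to every kernel and cokernel of multiplication by $T$ that appears, so that $q$ is genuinely multiplicative there. This is exactly where Proposition \ref{prop 4.1} and the identity $q(F)=1$ for finite $F$ are used. Everything else — the snake lemma, the isomorphism $\cO\llbracket T\rrbracket/(h,T)\cong\cO/(h(0))$, and the fact that $T$ acts injectively on $N_h$ when $h(0)\neq 0$ — is routine.
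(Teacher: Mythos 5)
Your proof is correct and follows the same overall strategy as the paper: pass to the Pontryagin dual $M=\cD^\vee$, reduce to an elementary module $E$ via the structure theorem, and compute the kernel and cokernel of multiplication by $T$ there explicitly. Where you are more careful is the reduction step. After noting only that the \emph{well-definedness} of $\chi(\Gamma,\cD)$ and the characteristic series $f_\cD$ are pseudo-isomorphism invariants, the paper simply declares ``we assume without loss of generality that $\cD^\vee$'' is elementary and computes $\#H^0$ and $\#H^1$ for that elementary module; it never spells out why the Euler characteristic itself (as opposed to its finiteness) is insensitive to the finite kernel and cokernel of the structural map. Your argument via the snake lemma applied to multiplication by $T$, the multiplicativity of the invariant $q(N)=\#(N/TN)/\#N[T]$ across short exact sequences of finitely generated torsion $\LcO$-modules, and the observation that $q(F)=1$ for finite $F$ closes that gap cleanly and is a genuine improvement in rigor. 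The remainder of your argument — the identification of $\#H^0(\Gamma,\cD)$ with $\#(M/TM)$ and $\#H^1(\Gamma,\cD)$ with $\#M[T]$ after noting that dualizing replaces $T$ by a unit multiple so kernels and cokernels are unaffected, the isomorphism $\cO\llbracket T\rrbracket/(h(T),T)\cong\cO/(h(0))$, and the vanishing of $N_h[T]$ when $h(0)\neq 0$ — matches the paper's computation on the elementary module.
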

\begin{proof}
    It is easy to see that if $\cD$ and $\cD'$ are cofinitely generated $\LcO$-modules that are pseudo-isomorphic, then $\cD^\Gamma$ is finite if and only if $(\cD')^\Gamma$ is finite. Therefore, $\chi(\Gamma, \cD)$ is well defined if and only if $\chi(\Gamma, \cD')$ is well defined. The characteristic series is determined up to pseudo-isomorphism, and therefore, $f_\cD(T)=f_{\cD'}(T)$. We assume without loss of generality that
    \[\cD^\vee=\left(\bigoplus_{i=1}^s\frac{\cO\llbracket T\rrbracket}{(\varpi^{\mu_i})}\right) \oplus \left(\bigoplus_{j=1}^t \frac{\cO\llbracket T\rrbracket}{(f_i(T)^{\lambda_i})}\right).\] We identify $\left(\cD^\Gamma\right)^\vee$ with \[\left(\cD^\vee\right)_\Gamma=\cD^\vee/T\cD^\vee \simeq \left(\bigoplus_{i=1}^s\cO/(\varpi^{\mu_i})\right) \oplus \left(\bigoplus_{j=1}^t \cO/(f_i(0)^{\lambda_i})\right).\]
    Since
    \[f_\cD(0)=\prod_i \varpi^{\mu_i}\times \prod_j f_j(0)^{\lambda_j},\]we deduce that \[\# H^0(\Gamma, \cD)=\# H^1(\Gamma, \cD^\vee)= \# \left(\cD^\vee\right)_\Gamma\] is finite if and only if $f_\cD(0)$ is non-zero. Therefore, the Euler characteristic is well defined if and only if $f_\cD(0)$ is non-zero.
    
    \par Assuming that $f_\cD(0)\neq 0$, we calculate the Euler characteristic. First, we note that the previous argument implies that
    \[\# H^0(\Gamma, \cD)= |f_\cD(0)|_\varpi^{-1}.\] On the other hand, we find that
    \[\# H^1(\Gamma, D)=\# H^0(\Gamma, D^\vee)=\# \left(D^\vee\right)^\Gamma.\]Identify $(\cD^\vee)^\Gamma$ with the kernel of the multiplication by $T$ endomorphism of $\cD^\vee$. Since $f_{\cD}(0)$ is assumed to be non-zero, it follows that none of the terms $f_j(T)$ are divisible by $T$. It follows from this that the multiplication by $T$ map is injective and hence, $(\cD^\vee)^\Gamma=0$. Thus it has been shown that 
    \[\chi(\Gamma, D)=|f_\cD(0)|_\varpi^{-1}.\]
    \end{proof}

    \begin{corollary}\label{cor4.4}
        Suppose that the Euler characteristic of $\cD$ is well defined. Then the following conditions are equivalent
        \begin{enumerate}
            \item\label{p1cor4.4} $\chi(\Gamma, \cD)=1$,
            \item\label{p2cor4.4} $\mu_p(\cD)=0$ and $\lambda_p(\cD)=0$.
        \end{enumerate}
    \end{corollary}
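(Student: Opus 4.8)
The plan is to deduce the equivalence directly from Proposition \ref{prop4.3} together with the shape of the characteristic element $f_\cD(T)$ recorded in Definition \ref{def of mu and lambda}. Since the Euler characteristic of $\cD$ is assumed to be well defined, Proposition \ref{prop4.3} applies and yields both $f_\cD(0)\neq 0$ and the formula $\chi(\Gamma, \cD)=|f_\cD(0)|_\varpi^{-1}$. In view of the normalization $|\varpi|_\varpi=[\cO:\varpi\cO]^{-1}$, the quantity $|f_\cD(0)|_\varpi$ equals $1$ precisely when $f_\cD(0)$ is a unit of $\cO$, that is, when $\varpi\nmid f_\cD(0)$. Hence condition \eqref{p1cor4.4} is equivalent to the condition $\varpi\nmid f_\cD(0)$, and it remains only to match this with \eqref{p2cor4.4}.

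For the latter I would use the factorization $f_\cD(T)=\varpi^{\mu_p(\cD)}\cdot g(T)$ coming from Definition \ref{def of mu and lambda}, where $g(T)=\prod_j f_j(T)^{\lambda_j}$ is a distinguished polynomial of degree $\lambda_p(\cD)$, with the convention $g\equiv 1$ when $\lambda_p(\cD)=0$. Evaluating at $T=0$ gives $f_\cD(0)=\varpi^{\mu_p(\cD)}g(0)$. If $\lambda_p(\cD)>0$, then $g$ is a distinguished polynomial of positive degree, so its constant term $g(0)$ is divisible by $\varpi$, whence $\varpi\mid f_\cD(0)$. If $\lambda_p(\cD)=0$, then $g(0)=1$ and $f_\cD(0)=\varpi^{\mu_p(\cD)}$, which is a unit if and only if $\mu_p(\cD)=0$. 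Combining the two cases, $\varpi\nmid f_\cD(0)$ holds if and only if $\mu_p(\cD)=0$ and $\lambda_p(\cD)=0$, which establishes the equivalence of \eqref{p1cor4.4} and \eqref{p2cor4.4}. Alternatively, one may simply invoke the remark made after the expansion $f_\cD(T)=\sum_{i=0}^\lambda a_iT^i$, where exactly this assertion is recorded.

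Since all the substantive content is already contained in Proposition \ref{prop4.3}, I do not anticipate any genuine obstacle here; the only point requiring a moment's care is the elementary bookkeeping relating the $\varpi$-divisibility of the constant term $f_\cD(0)$ to the simultaneous vanishing of $\mu_p(\cD)$ and $\lambda_p(\cD)$, which is governed by the defining property of distinguished polynomials.
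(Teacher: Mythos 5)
Your proof is correct and follows essentially the same route as the paper: invoke Proposition \ref{prop4.3} to reduce the Euler characteristic condition to $\varpi\nmid f_\cD(0)$, then observe that this is equivalent to $f_\cD(T)$ being a unit in $\Lambda_\cO$, i.e.\ to $\mu_p(\cD)=\lambda_p(\cD)=0$. The only difference is that you spell out the distinguished-polynomial bookkeeping which the paper dispatches with an ``it is easy to see.''
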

    \begin{proof}
        It is easy to see that $\mu_p(\cD)=0$ and $\lambda_p(\cD)=0$ if and only if $f_\cD(T)$ is a unit in $\Lambda_\cO$. Thus, the condition \eqref{p2cor4.4} is equivalent to the condition that $\varpi\nmid f_\cD(0)$. According to Proposition \ref{prop4.3}, 
        \[\chi(\Gamma, \cD)=|f_\cD(0)|_\varpi^{-1}.\] Therefore $\chi(\Gamma, \cD)=1$ if and only if $f_\cD(0)$ is a unit in $\cO$. 
    \end{proof}

    \subsection{Calculating the Euler characteristic}
Let $\rho$ be an Artin representation with character $\chi$ and set $\epsilon$ to be the chosen character with respect to which the Selmer group $S_{\chi, \epsilon}(\Q_\infty)$ is defined. We shall assume throughout that the Assumption \ref{main ass} is satisfied. 

\begin{proposition}\label{well defined EC}
    With respect to above notation, the following conditions are equivalent
    \begin{enumerate}
        \item the Euler characteristic $\chi(\Gamma, S_{\chi, \epsilon}(\Q_\infty))$ is defined, 
        \item $H^0(\Delta_{\p}, D/D^{\epsilon_\p})=0$.
    \end{enumerate}
\end{proposition}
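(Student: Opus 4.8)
The plan is to reduce the assertion to the Greenberg--Vatsal control theorem (Theorem \ref{control theorem}) applied at the ground layer. By Definition \ref{def of euler char}, the Euler characteristic $\chi(\Gamma, S_{\chi, \epsilon}(\Q_\infty))$ is well defined precisely when $H^0(\Gamma, S_{\chi, \epsilon}(\Q_\infty)) = S_{\chi, \epsilon}(\Q_\infty)^\Gamma$ is finite (equivalently, by Proposition \ref{prop 4.1}, when $H^1(\Gamma, S_{\chi, \epsilon}(\Q_\infty))$ is finite). So the entire statement becomes a question about the finiteness of the module of $\Gamma$-invariants $S_{\chi, \epsilon}(\Q_\infty)^\Gamma$.

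Next I would invoke Theorem \ref{control theorem}. Since $\rho$ is irreducible of dimension $d>1$, its character $\chi$ is nontrivial, so the hypothesis of the control theorem is satisfied; specializing to $n=0$, so that $\Q_0=\Q$ and $\Gamma_0=\Gamma$, it yields the short exact sequence of $\Gamma$-modules
\[0\rightarrow S_{\chi, \epsilon}(\Q)\rightarrow S_{\chi, \epsilon}(\Q_\infty)^{\Gamma}\rightarrow H^0(\Delta_{\p}, D/D^{\epsilon_\p})\rightarrow 0.\]
By Theorem \ref{GV finiteness}, the group $S_{\chi, \epsilon}(\Q)$ is finite. Consequently $S_{\chi, \epsilon}(\Q_\infty)^\Gamma$ is finite if and only if the quotient term $H^0(\Delta_{\p}, D/D^{\epsilon_\p})$ is finite.

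Finally I would use the structure of $H^0(\Delta_{\p}, D/D^{\epsilon_\p})$ recorded just after Theorem \ref{control theorem}: it is isomorphic to $(\cF/\cO)^t$, where $t$ is the multiplicity of the trivial representation of $\Delta_{\p}$ in $V/V^{\epsilon_\p}$. Since $\cF/\cO$ is an infinite ($p$-divisible) group, $(\cF/\cO)^t$ is finite if and only if $t=0$, i.e. if and only if $H^0(\Delta_{\p}, D/D^{\epsilon_\p})=0$. Concatenating the equivalences from the three paragraphs gives the proposition.

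I do not anticipate a real obstacle: the argument is a direct combination of the ground-level finiteness (Theorem \ref{GV finiteness}), the control theorem, and the elementary fact that a divisible $\cO$-module is finite only when it is trivial. The one point deserving a line of care is verifying that the hypothesis of the control theorem — nontriviality of $\chi$ — holds automatically under Assumption \ref{main ass} (it follows from irreducibility together with $d>1$), and checking that the case $n=0$ of that theorem is genuinely permitted; both are routine.
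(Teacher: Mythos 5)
Your proof is correct and is essentially identical to the paper's argument: reduce to finiteness of $S_{\chi,\epsilon}(\Q_\infty)^\Gamma$, apply the control theorem at $n=0$, invoke finiteness of $S_{\chi,\epsilon}(\Q)$, and observe that the cofree $\cO$-module $H^0(\Delta_\p, D/D^{\epsilon_\p})$ is finite only when zero. Your explicit check that $\chi$ is nontrivial (from irreducibility and $d>1$) is a small bit of care the paper leaves implicit, but it does not change the route.
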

\begin{proof}
    The Euler characteristic is defined if and only if $S_{\chi, \epsilon}(\Q_\infty)^{\Gamma}$ is finite. By the control theorem, we have the short exact sequence
    \[0\rightarrow S_{\chi, \epsilon}(\Q)\rightarrow S_{\chi, \epsilon}(\Q_\infty)^{\Gamma}\rightarrow H^0(\Delta_{\p}, D/D^{\epsilon_\p})\rightarrow 0,\] where $H^0(\Delta_{\p}, D/D^{\epsilon_\p})$ is a cofree $\cO$-module. According to Theorem \ref{GV finiteness}, the Selmer group $S_{\chi, \epsilon}(\Q)$ is finite. Therefore, the Euler characteristic is well defined if and only if $H^0(\Delta_{\p}, D/D^{\epsilon_\p})=0$.
\end{proof}

\begin{theorem}\label{Euler char formula}
    Let $\rho$ be an Artin representation for which 
    \begin{enumerate}
        \item the conditions of Assumption \ref{main ass} are satisfied.
        \item $H^0(\Delta_{\p}, D/D^{\epsilon_\p})=0$.
    \end{enumerate}
    Then, the Euler characteristic is given by 
    \[\chi(\Gamma, S_{\chi, \epsilon}(\Q_\infty))=\# S_{\chi, \epsilon}(\Q).\]
\end{theorem}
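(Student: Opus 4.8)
The plan is to collapse the alternating product defining the Euler characteristic to the single term $\# H^0(\Gamma, S_{\chi,\epsilon}(\Q_\infty))$, and then to identify that term with $\# S_{\chi,\epsilon}(\Q)$ using the control theorem. First, hypothesis (2) is exactly the condition occurring in Proposition \ref{well defined EC}, so the Euler characteristic $\chi(\Gamma, S_{\chi,\epsilon}(\Q_\infty))$ is well defined; in particular both $H^0(\Gamma, S_{\chi,\epsilon}(\Q_\infty))$ and $H^1(\Gamma, S_{\chi,\epsilon}(\Q_\infty))$ are finite, by Proposition \ref{prop 4.1}.

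Next I would show that $H^1(\Gamma, S_{\chi,\epsilon}(\Q_\infty)) = 0$. Write $\cD := S_{\chi,\epsilon}(\Q_\infty)$, so that $H^1(\Gamma, \cD) = \cD_\Gamma$. Pontryagin duality identifies $(\cD_\Gamma)^\vee$ with $(\cD^\vee)^\Gamma = X_{\chi,\epsilon}(\Q_\infty)[T]$, the submodule of $X_{\chi,\epsilon}(\Q_\infty)$ killed by $T$. By the previous paragraph this submodule is finite, but Theorem \ref{no finite submodules} asserts that $X_{\chi,\epsilon}(\Q_\infty)$ contains no nontrivial finite $\LcO$-submodule; hence $X_{\chi,\epsilon}(\Q_\infty)[T] = 0$ and therefore $H^1(\Gamma, \cD) = 0$. (Equivalently, once one knows the Euler characteristic is well defined one has $f_\cD(0)\neq 0$, and the computation in the proof of Proposition \ref{prop4.3} already shows $(\cD^\vee)^\Gamma = 0$.) Consequently
\[\chi(\Gamma, S_{\chi,\epsilon}(\Q_\infty)) = \# H^0(\Gamma, S_{\chi,\epsilon}(\Q_\infty)) = \# S_{\chi,\epsilon}(\Q_\infty)^{\Gamma}.\]

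Finally, I would apply the control theorem, Theorem \ref{control theorem}, which is available because $\chi$ is nontrivial (it is the character of a nontrivial irreducible representation). It yields the exact sequence
\[0 \to S_{\chi,\epsilon}(\Q) \to S_{\chi,\epsilon}(\Q_\infty)^{\Gamma} \to H^0(\Delta_{\p}, D/D^{\epsilon_\p}) \to 0,\]
and since the right-hand term vanishes by hypothesis (2), the natural inclusion $S_{\chi,\epsilon}(\Q) \hookrightarrow S_{\chi,\epsilon}(\Q_\infty)^{\Gamma}$ is an isomorphism. Combining this with the displayed identity gives $\chi(\Gamma, S_{\chi,\epsilon}(\Q_\infty)) = \# S_{\chi,\epsilon}(\Q)$, and this quantity is finite by Theorem \ref{GV finiteness}.

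I do not anticipate a genuine obstacle: all three ingredients — well-definedness (Proposition \ref{well defined EC}), vanishing of $H^1$ (Theorem \ref{no finite submodules}, or the computation in Proposition \ref{prop4.3}), and control (Theorem \ref{control theorem}) — are already in place, so the argument is essentially an assembly of these facts. The only point meriting a little care is the duality bookkeeping in the middle step, namely keeping straight that $H^1(\Gamma,\cD)$ is dual to the $T$-torsion of $X_{\chi,\epsilon}(\Q_\infty)$ and invoking Theorem \ref{no finite submodules} for precisely that module.
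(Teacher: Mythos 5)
Your argument is correct and mirrors the paper's own proof essentially step for step: well-definedness from Proposition \ref{well defined EC}, vanishing of $H^1(\Gamma,\cD)$ (equivalently $X_{\chi,\epsilon}(\Q_\infty)^\Gamma=0$) via Theorem \ref{no finite submodules}, and then the control theorem with hypothesis (2) to identify $S_{\chi,\epsilon}(\Q_\infty)^\Gamma$ with $S_{\chi,\epsilon}(\Q)$. The only difference is that you spell out the Pontryagin-duality bookkeeping that the paper leaves implicit.
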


\begin{proof}
    According to Proposition \ref{well defined EC}, the Euler characteristic is well defined. We are to calculate the orders of the finite abelian groups $S_{\chi, \epsilon}(\Q_\infty)^{\Gamma}$ and $S_{\chi, \epsilon}(\Q_\infty)_{\Gamma}$. According to Theorem \ref{no finite submodules}, $X_{\chi, \epsilon}(\Q_\infty):=S_{\chi, \epsilon}(\Q_\infty)^\vee$ does not contain any nontrivial finite $\Lambda_{\cO}$-submodules. Since the Euler characteristic is well defined, $X_{\chi, \epsilon}(\Q_\infty)^{\Gamma}$ is finite, and hence is zero. In other words, 
    $S_{\chi, \epsilon}(\Q_\infty)_{\Gamma}=0$. On the other hand, it follows from Theorem \ref{control theorem} that 
    \[S_{\chi, \epsilon}(\Q_\infty)^{\Gamma}=S_{\chi, \epsilon}(\Q).\]Therefore, we find that $\chi(\Gamma, S_{\chi, \epsilon}(\Q_\infty))=\# S_{\chi, \epsilon}(\Q)$.
\end{proof}

\begin{corollary}\label{cor 4.7}
    Let $\rho$ be an Artin representation and $\epsilon:\op{Gal}(\cK/\Q_p)\rightarrow \cO^\times$ be a character for which the following assumptions are satisfied.
     \begin{enumerate}[label=(\roman*)]
        \item The conditions of Assumption \ref{main ass} are satisfied.
        \item $H^0(\Delta_{\p}, D/D^{\epsilon_\p})=0$.
    \end{enumerate}
    Then, the following conditions are equivalent
    \begin{enumerate}
        \item\label{c1cor4.7} $S_{\chi, \epsilon}(\Q_\infty)=0$,
        \item\label{c2cor4.7} $S_{\chi, \epsilon}(\Q_\infty)$ is finite,
        \item\label{c3cor4.7} $\mu_{\chi, \epsilon}=\lambda_{\chi, \epsilon}=0$, 
        \item\label{c4cor4.7} $\chi(\Gamma, S_{\chi, \epsilon}(\Q_\infty))=1$,
        \item\label{c5cor4.7} $S_{\chi, \epsilon}(\Q)=0$.
    \end{enumerate}
\end{corollary}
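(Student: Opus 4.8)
The plan is to deduce everything from results already in hand, organising the argument around the implications $\eqref{c1cor4.7}\Rightarrow\eqref{c3cor4.7}$, the mutual equivalence of $\eqref{c3cor4.7}$, $\eqref{c4cor4.7}$, $\eqref{c5cor4.7}$, the implication $\eqref{c5cor4.7}\Rightarrow\eqref{c1cor4.7}$, and separately $\eqref{c1cor4.7}\Leftrightarrow\eqref{c2cor4.7}$. First I would record that, by Proposition \ref{well defined EC}, the hypothesis $H^0(\Delta_{\p}, D/D^{\epsilon_\p})=0$ ensures that $\chi\bigl(\Gamma, S_{\chi,\epsilon}(\Q_\infty)\bigr)$ is well defined, so that Corollary \ref{cor4.4} and Theorem \ref{Euler char formula} apply to $\cD=S_{\chi,\epsilon}(\Q_\infty)$.

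Granting this, $\eqref{c1cor4.7}\Rightarrow\eqref{c3cor4.7}$ is trivial, as the zero module has vanishing Iwasawa invariants. Corollary \ref{cor4.4} gives $\eqref{c3cor4.7}\Leftrightarrow\eqref{c4cor4.7}$ directly. Theorem \ref{Euler char formula} gives $\chi\bigl(\Gamma, S_{\chi,\epsilon}(\Q_\infty)\bigr)=\#S_{\chi,\epsilon}(\Q)$, so $\chi=1$ if and only if $S_{\chi,\epsilon}(\Q)=0$, which is $\eqref{c4cor4.7}\Leftrightarrow\eqref{c5cor4.7}$; hence $\eqref{c3cor4.7}$, $\eqref{c4cor4.7}$ and $\eqref{c5cor4.7}$ are all equivalent. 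For $\eqref{c1cor4.7}\Leftrightarrow\eqref{c2cor4.7}$ only $\eqref{c2cor4.7}\Rightarrow\eqref{c1cor4.7}$ needs comment: if $S_{\chi,\epsilon}(\Q_\infty)$ is finite then so is its Pontryagin dual $X_{\chi,\epsilon}(\Q_\infty)$, which by Theorem \ref{no finite submodules} contains no nontrivial finite $\Lambda_\cO$-submodule; being finite it is such a submodule of itself, so it vanishes, and hence so does $S_{\chi,\epsilon}(\Q_\infty)$.

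It remains to show $\eqref{c5cor4.7}\Rightarrow\eqref{c1cor4.7}$, which is the only step with real content, since it passes from vanishing at the ground layer to vanishing over $\Q_\infty$. I would invoke the control theorem (Theorem \ref{control theorem}) at $n=0$: since $H^0(\Delta_{\p}, D/D^{\epsilon_\p})=0$, it gives $S_{\chi,\epsilon}(\Q_\infty)^{\Gamma}\cong S_{\chi,\epsilon}(\Q)=0$, and dualizing, the $\Gamma$-coinvariants $X_{\chi,\epsilon}(\Q_\infty)/T\,X_{\chi,\epsilon}(\Q_\infty)$ vanish. Since $\Lambda_\cO$ is Noetherian local with maximal ideal $\mathfrak{m}=(\varpi,T)$ and $X_{\chi,\epsilon}(\Q_\infty)$ is finitely generated over it, the inclusion $T\,X_{\chi,\epsilon}(\Q_\infty)\subseteq \mathfrak{m}\,X_{\chi,\epsilon}(\Q_\infty)$ gives $X_{\chi,\epsilon}(\Q_\infty)=\mathfrak{m}\,X_{\chi,\epsilon}(\Q_\infty)$, so Nakayama's lemma forces $X_{\chi,\epsilon}(\Q_\infty)=0$, i.e.\ $S_{\chi,\epsilon}(\Q_\infty)=0$. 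The one subtlety worth flagging is precisely this last point: vanishing of the $\Gamma$-coinvariants — killing only by $T$ — already suffices, because $T$ lies in $\mathfrak{m}$; every other implication is bookkeeping with Proposition \ref{well defined EC}, Corollary \ref{cor4.4}, and Theorems \ref{control theorem}, \ref{no finite submodules} and \ref{Euler char formula}.
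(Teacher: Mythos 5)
Your proof is correct, and it follows the same skeleton as the paper's proof for most of the equivalences: $(1)\Leftrightarrow(2)$ via Theorem~\ref{no finite submodules}, $(3)\Leftrightarrow(4)$ via Corollary~\ref{cor4.4}, and $(4)\Leftrightarrow(5)$ via the Euler characteristic formula of Theorem~\ref{Euler char formula}. Where you genuinely diverge is in closing the loop. The paper goes $(3)\Leftrightarrow(2)$ by the structure-theoretic fact that a finitely generated torsion $\Lambda_\cO$-module has $\mu=\lambda=0$ if and only if it is pseudo-isomorphic to zero, i.e.\ finite, and then invokes Theorem~\ref{no finite submodules} once more. You instead prove $(5)\Rightarrow(1)$ directly: the control theorem (Theorem~\ref{control theorem}) at level $n=0$ together with the hypothesis $H^0(\Delta_\p, D/D^{\epsilon_\p})=0$ gives $S_{\chi,\epsilon}(\Q_\infty)^\Gamma\cong S_{\chi,\epsilon}(\Q)=0$, hence $X_{\chi,\epsilon}(\Q_\infty)/T\,X_{\chi,\epsilon}(\Q_\infty)=0$, and Nakayama's lemma over the local ring $\Lambda_\cO$ forces $X_{\chi,\epsilon}(\Q_\infty)=0$. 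This is a classical Iwasawa-theoretic descent and is arguably more self-contained: it bypasses the ``$\mu=\lambda=0\Leftrightarrow$ finite'' characterization entirely and yields the equivalence of $(1),(3),(4),(5)$ without needing the no-finite-submodules theorem, which is then used only to bring $(2)$ into the equivalence class. One small point in your favour: the paper attributes $(4)\Leftrightarrow(5)$ to Proposition~\ref{prop4.3}, which only relates $\chi(\Gamma,\cD)$ to $f_\cD(0)$; the precise input is Theorem~\ref{Euler char formula}, which you cite correctly.
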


\begin{proof}
\par It is clear that \eqref{c1cor4.7} implies \eqref{c2cor4.7}. On the other hand, by Theorem \ref{no finite submodules}, $X_{\chi, \epsilon}(\Q_\infty)$ contains no nontrivial finite $\Lambda_\cO$-submodules. Hence, if $S_{\chi, \epsilon}(\Q_\infty)$ is finite, it must be equal to $0$. Therefore, \eqref{c1cor4.7} and \eqref{c2cor4.7} are equivalent.
\par We note that a $\Lambda_\cO$-module $M$ is finite if and only if it is pseudo-isomorphic to the trivial module. On the other hand, $M\sim 0$ if and only if the $\mu$ and $\lambda$-invariants of $M$ are $0$. Therefore \eqref{c2cor4.7} and \eqref{c3cor4.7} are equivalent.
\par The equivalence of \eqref{c3cor4.7} and \eqref{c4cor4.7} follows from Corollary \ref{cor4.4}, and the equivalence of \eqref{c4cor4.7} and \eqref{c5cor4.7} follows from Proposition \ref{prop4.3}.
\end{proof}

\subsection{The structure of the Selmer group $S_{\chi, \epsilon}(\Q)$}
\par In this subsection, we analyze the structure of $S_{\chi, \epsilon}(\Q)$ and establish conditions for the vanishing of $S_{\chi, \epsilon}(\Q_\infty)$. We set $d(\chi)$ to denote $d$ and write $d(\chi)=d^+(\chi)+d^-(\chi)$, reflecting the action of the archimedian places on the representation. At each prime $\p|p$, let $\cU_\p$ denote the principal units at $\p$, and set $\cU_p$ to denote the product $\prod_{\p|p} \cU_\p$. The decomposition group $\Delta_{\p}$ naturally acts on $\cU_{\p}$, and $\cU_p$ is identified with the induced representation $\op{Ind}_{\Delta_\p}^\Delta \cU_{\p}$. Let $U_K$ be the group of units of $\cO_K$ that are principal units at all primes $\p|p$. The diagonal inclusion map 
\[U_K\hookrightarrow \cU_p\] is a $\Delta$-equivariant map, and induces a $\Z_p$-linear map 
\[\lambda_p: U_K\otimes \Z_p\rightarrow \cU_p.\] Let $\bar{U}_{p}$ denote the closure of the image of the map $\lambda_p$. We recall that the character $\epsilon$ arises from a choice of prime $\p|p$ and character $\epsilon_\p: \Delta_\p\rightarrow \cO^\times$. Let $\p'$ be any other prime above $p$, and write $\p'=\delta(\p)$ for $\delta\in \Delta$. Then conjugation by $\delta$ gives an isomorphism $c_\delta:\Delta_{\p'}\xrightarrow{\sim} \Delta_\p$. We set $\epsilon_{\p'}$ to denote the composite 
\[\Delta_{\p'}\xrightarrow{c_\delta} \Delta_\p\xrightarrow{\epsilon_\p}\cO^\times.\]Thus for each prime $\p|p$, we have made a choice of character $\epsilon_p$ of $\Delta_\p$. Given a $\Z_p$-module $M$, set $M_{\cO}:=M\otimes_{\Z_p}\cO$. For $\p|p$, we have a decomposition of $\cO[\Delta_\p]$-modules, 
\[\cU_{\p, \cO}=\cU_{\p, \cO}^{\epsilon_\p}\times \mathcal{V}_{\p, \cO},\]where the action of $\Delta_\p$ on $\cU_{\p, \cO}^{\epsilon_\p}$ is via $\epsilon_\p$.  
Use the following notation
\[\cU_{p, \cO}^{[\epsilon]}:=\prod_{\p|p} \cU_{\p, \cO}^{\epsilon_\p}, \text{ and } \mathcal{V}_{p, \cO}=\prod_{\p|p} \mathcal{V}_{\p, \cO}.\] Then, we find that 
\[\cU_{p, \cO}=\cU_{p, \cO}^{[\epsilon]}\times \mathcal{V}_{p, \cO}.\]
The above decomposition is that of $\cO[\Delta]$-modules. In the same way as above, $\bar{U}_{p,\cO}$ decomposes into a product 
\[\bar{U}_{p, \cO}=\bar{U}_{p, \cO}^{[\epsilon]}\times \bar{V}_{p, \cO}.\]

Let $\left(\bar{U}_{p, \cO}^{[\epsilon]}\right)^\chi$ (resp. $\left(\bar{U}_{p, \cO}^{[\epsilon]}\right)^\chi$) denote the $\chi$-isotypic component of $\left(\bar{U}_{p, \cO}^{[\epsilon]}\right)$ (resp. $\left(\bar{U}_{p, \cO}^{[\epsilon]}\right)$). Greenberg and Vatsal show that $\left(\bar{U}_{p, \cO}^{[\epsilon]}\right)^\chi$ has finite index in $\left(\cU_{p, \cO}^{[\epsilon]}\right)^\chi$, provided the assertions of Assumption \ref{main ass} are satisfied. 

\begin{proposition}\label{prop4.8} With respect to the notation above, suppose that the Assumption \ref{main ass} is satisfied. Then, the following assertions hold
\begin{enumerate}
    \item $\left(\bar{U}_{p, \cO}^{[\epsilon]}\right)^\chi$ has finite index in $\left(\cU_{p, \cO}^{[\epsilon]}\right)^\chi$,
    \item there is a short exact sequence of $\cO$-modules
    \[0\rightarrow H^1_{\op{nr}}(\Q, D)\rightarrow S_{\chi, \epsilon}(\Q)\rightarrow \Hom_{\cO[\Delta]}\left(\left(\cU_{p, \cO}^{[\epsilon]}\right)^\chi/\left(\bar{U}_{p, \cO}^{[\epsilon]}\right)^\chi, D\right)\rightarrow 0.\]
    Here, $H^1_{\op{nr}}(\Q, D)$ is the subgroup of $H^1(\Q,D)$ consisting of cohomology classes that are unramified at all primes.
\end{enumerate}
\end{proposition}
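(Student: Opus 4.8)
The plan is to build the short exact sequence by unwinding the definition of the Selmer group $S_{\chi,\epsilon}(\Q)$ and isolating the local condition at $p$, then identifying the cokernel of the global-to-local map with a $\Hom$-group via local Tate duality (or, following Greenberg--Vatsal, via an explicit description of the local condition in terms of units). First I would recall that $H^1_{\op{nr}}(\Q,D)$, by definition the classes in $H^1(\Q,D)$ unramified at \emph{every} prime, sits inside $S_{\chi,\epsilon}(\Q)$: the Selmer condition at $v\nmid p$ is precisely the unramified condition, and at $p$ the condition (classes landing in $H^1(\Q_{\eta_p}^{\op{nr}},D/D^{\epsilon_\p})$) is \emph{weaker} than being unramified, so an everywhere-unramified class automatically lies in $S_{\chi,\epsilon}(\Q)$. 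This gives the injection on the left; the quotient $S_{\chi,\epsilon}(\Q)/H^1_{\op{nr}}(\Q,D)$ then measures exactly the ramification allowed at $p$.

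The core step is to identify this quotient with $\Hom_{\cO[\Delta]}\big((\cU_{p,\cO}^{[\epsilon]})^\chi/(\bar U_{p,\cO}^{[\epsilon]})^\chi,\,D\big)$. Here I would use the standard translation between $H^1$ of local Galois groups with coefficients in a divisible module and $\Hom$ of the relevant unit groups (Iwasawa-theoretic local duality: $H^1(\Q_p, D)$ is, up to the unramified part, dual to a module built out of principal units, and the $\epsilon_\p$-part of the local condition cuts this down to $\cU_{p,\cO}^{[\epsilon]}$). The global classes unramified outside $p$ that are \emph{not} everywhere unramified are then governed, via the diagonal map $\lambda_p\colon U_K\otimes\Z_p\to\cU_p$ and class field theory, by the cokernel of $\bar U_{p,\cO}\to\cU_{p,\cO}$; passing to the $[\epsilon]$-isotypic piece and then the $\chi$-isotypic piece (using that $p\nmid[K:\Q]$ so that taking $\chi$-components is exact) yields the stated $\Hom$-group. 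Part (1), that $(\bar U_{p,\cO}^{[\epsilon]})^\chi$ has finite index in $(\cU_{p,\cO}^{[\epsilon]})^\chi$, is cited from Greenberg--Vatsal and also follows on comparing $\cO$-ranks: both have rank equal to the multiplicity $d^+(\chi)=1$ of the trivial character at the archimedean place intersected with the $\epsilon_\p$-condition, the rank equality being a consequence of the $p$-adic analogue of Dirichlet's unit theorem together with Assumption~\ref{main ass}(2).

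The main obstacle I expect is the bookkeeping in the local-at-$p$ identification: one must carefully track how the Selmer condition ``lands in $H^1$ of the maximal unramified extension with coefficients in $D/D^{\epsilon_\p}$'' dualizes to the $\epsilon_\p$-component of the principal units, including getting the $\Delta$-equivariance right so that the induced-representation structure $\cU_p=\op{Ind}_{\Delta_\p}^\Delta\cU_\p$ is compatible with extracting $\chi$-isotypic components (this is where $p\nmid[K:\Q]$ is essential, since it makes $\cO[\Delta]$ and its isotypic projectors well-behaved). A secondary point requiring care is checking exactness on the right — that every homomorphism in the target $\Hom$-group actually lifts to a global Selmer class — which amounts to a vanishing statement for a $H^2$ or a global Poitou--Tate obstruction term; I would handle this by invoking the finiteness of $S_{\chi,\epsilon}(\Q)$ (Theorem~\ref{GV finiteness}) together with a rank count, so that surjectivity follows once both sides are seen to have the same (finite) order after the injections are in place. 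Since Greenberg and Vatsal establish essentially this description, I would present the argument as an unwinding of their construction rather than reproving the local duality from scratch.
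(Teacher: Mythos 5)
Your proposal is essentially the same as the paper's: the paper's entire proof is the one-line citation ``The result follows from the proof of [Greenberg--Vatsal, Proposition 3.1],'' and your sketch is an unwinding of that argument, which is the right thing to do. The left injection, the identification of the quotient with the $\Hom$-group via the local condition at $p$ and the unit map $\lambda_p$, and the deference to Greenberg--Vatsal for the technical details are all consistent with what the paper intends.

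Two small cautions. First, the surjectivity step: you propose to deduce it from Theorem~\ref{GV finiteness} (finiteness of $S_{\chi,\epsilon}(\Q)$) plus a rank count, but Theorem~\ref{GV finiteness} is itself GV Proposition~3.1, and in GV that finiteness is \emph{proved via} the very exact sequence you are constructing (injectivity, plus finiteness of $H^1_{\op{nr}}(\Q,D)$ and of the unit-index term). Using finiteness to re-derive surjectivity would therefore be circular as stated; in GV's actual argument the surjectivity comes from a Poitou--Tate/global duality computation together with the vanishing of the relevant $H^2$, and the finiteness is a corollary, not an input. Since you are ultimately deferring to GV this does not sink the proposal, but if you were to write it out you would need to prove the surjectivity directly rather than invoking Theorem~\ref{GV finiteness}. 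Second, a minor imprecision: the $\cO$-rank of $(\cU_{p,\cO}^{[\epsilon]})^\chi$ is $d=d(\chi)$, not $d^+(\chi)=1$; the quantity $1$ is the \emph{multiplicity} of $\rho$ in $\op{Ind}_{\Delta_\p}^\Delta\epsilon_\p$ (by Frobenius reciprocity and Assumption~\ref{main ass}(3)), and likewise $d^+=1$ gives the multiplicity of $\rho$ in $U_K\otimes\Q$. The rank comparison still works out, since both multiplicities equal $1$ so both $\chi$-components have $\cO$-rank $d$, but the bookkeeping should distinguish multiplicity from rank.
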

\begin{proof}
    The result follows from the proof of \cite[Proposition 3.1]{greenbergvatsalArtinrepns}.
\end{proof}

\begin{theorem}\label{conditions GV}
    Assume that  
     \begin{enumerate}[label=(\roman*)]
        \item the conditions of Assumption \ref{main ass} are satisfied.
        \item $H^0(\Delta_{\p}, D/D^{\epsilon_\p})=0$.
    \end{enumerate}
    Then, the following conditions are equivalent
    \begin{enumerate}
        \item $H^1_{\op{nr}}(\Q, D)=0$ and $\left(\cU_{p, \cO}^{[\epsilon]}\right)^\chi=\left(\bar{U}_{p, \cO}^{[\epsilon]}\right)^\chi$.
        \item $\mu_{\chi, \epsilon}=\lambda_{\chi, \epsilon}=0$.
         \item $S_{\chi, \epsilon}(\Q_\infty)=0$, 
    \end{enumerate}
\end{theorem}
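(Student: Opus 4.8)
The plan is to bootstrap off Corollary~\ref{cor 4.7} together with the exact sequence of Proposition~\ref{prop4.8}. Under hypotheses (i) and (ii), Corollary~\ref{cor 4.7} already gives the equivalence of conditions (2) and (3), and shows that both are equivalent to the vanishing of $S_{\chi,\epsilon}(\Q)$. Thus the entire theorem reduces to proving that condition (1) is equivalent to $S_{\chi,\epsilon}(\Q)=0$. For this I would invoke the short exact sequence of $\cO$-modules from Proposition~\ref{prop4.8}(2),
\[0\longrightarrow H^1_{\op{nr}}(\Q, D)\longrightarrow S_{\chi, \epsilon}(\Q)\longrightarrow \Hom_{\cO[\Delta]}\!\left(Q, D\right)\longrightarrow 0,\]
where $Q:=\left(\cU_{p, \cO}^{[\epsilon]}\right)^\chi/\left(\bar{U}_{p, \cO}^{[\epsilon]}\right)^\chi$. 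By exactness, $S_{\chi,\epsilon}(\Q)=0$ if and only if simultaneously $H^1_{\op{nr}}(\Q,D)=0$ and $\Hom_{\cO[\Delta]}(Q,D)=0$; comparing with condition (1), the proof is complete once one shows that $\Hom_{\cO[\Delta]}(Q,D)=0$ exactly when $Q=0$.

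This last claim is the crux. One implication is trivial. For the converse, recall from Proposition~\ref{prop4.8}(1) that $Q$ is a \emph{finite} $\cO[\Delta]$-module, and it is $\chi$-isotypic: since $p\nmid[K:\Q]$ the idempotent $e_\chi=\frac{d}{[K:\Q]}\sum_{\delta\in\Delta}\overline{\chi(\delta)}\,\delta$ lies in $\cO[\Delta]$, the $\chi$-component functor $M\mapsto e_\chi M$ is exact, and $e_\chi$ acts as the identity on $T$ and hence on $D=V/T$. A nonzero finite $\cO[\Delta]$-module has a simple quotient, necessarily killed by $\varpi$ (as $\varpi$ lies in the Jacobson radical of $\cO[\Delta]$); being a quotient of $Q$ it is again $\chi$-isotypic, and since $p\nmid[K:\Q]$ forces $\bar\rho:=T/\varpi T$ to be irreducible over $\FF=\cO/\varpi$ and to be the unique simple $\FF[\Delta]$-module on which $e_\chi$ acts as the identity, this simple quotient must be isomorphic to $\bar\rho$. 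Composing the resulting surjection $Q\twoheadrightarrow\bar\rho$ with the inclusion $\bar\rho\cong D[\varpi]\hookrightarrow D$ yields a nonzero element of $\Hom_{\cO[\Delta]}(Q,D)$. Hence $\Hom_{\cO[\Delta]}(Q,D)\neq 0$ whenever $Q\neq 0$, which completes the chain of equivalences (1)~$\Leftrightarrow$~$S_{\chi,\epsilon}(\Q)=0$~$\Leftrightarrow$~(2)~$\Leftrightarrow$~(3).

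The step I expect to be the main obstacle is precisely this homological input: because $Q$ is torsion one cannot argue by semisimplicity over $\cF$, so the argument must descend to the residual representation and use $p\nmid[K:\Q]$ both to know that $\bar\rho$ is irreducible and to know that it is ``seen'' inside $D$. A minor auxiliary point to check is that the superscript $\chi$ --- a priori the image of the idempotent $e_\chi$ --- is compatible with passing to the quotient $Q$; this is automatic because $e_\chi\in\cO[\Delta]$ is idempotent and $e_\chi(-)$ is exact.
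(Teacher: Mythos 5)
Your proof is correct and follows the same route the paper takes: the paper's proof is the one-line remark that the theorem ``follows as a direct consequence of Corollary~\ref{cor 4.7} and Proposition~\ref{prop4.8},'' which is precisely your reduction of (2)$\Leftrightarrow$(3) to $S_{\chi,\epsilon}(\Q)=0$ via Corollary~\ref{cor 4.7} and then the comparison with condition~(1) via the exact sequence of Proposition~\ref{prop4.8}. What you add is the justification, left implicit in the paper, that $\Hom_{\cO[\Delta]}(Q,D)=0$ forces $Q=0$ (via $p\nmid[K:\Q]$, the idempotent $e_\chi\in\cO[\Delta]$, and the fact that the residual representation $\bar\rho\cong D[\varpi]$ is the unique simple $\chi$-isotypic $\FF[\Delta]$-module); that supporting argument is sound.
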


\begin{proof}
    The result follows as a direct consequence of Corollary \ref{cor 4.7} and Proposition \ref{prop4.8}.
\end{proof}

\begin{lemma}\label{lemma4.10}
    With respect to the notation above, assume that $p\nmid \# \op{Cl}(K)$. Then, we find that $H^1_{\op{nr}}(\Q, D)=0$. 
\end{lemma}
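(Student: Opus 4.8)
The plan is to pull a class in $H^1_{\op{nr}}(\Q,D)$ back to $K$, where $D$ becomes a trivial Galois module, and then recognize everywhere-unramified classes over $K$ as homomorphisms out of the $p$-part of the class group. Since $\rho$ factors through $\Delta=\op{Gal}(K/\Q)$, the group $\op{G}_K:=\op{Gal}(\bar{\Q}/K)$ acts trivially on $V$, and hence on $D=V/T$; recall also that $D\simeq(\cF/\cO)^d$ is $p$-primary torsion.

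First I would check that the restriction map $\op{res}\colon H^1(\Q,D)\to H^1(K,D)$ is injective. By the inflation--restriction exact sequence its kernel is $H^1(\Delta,D^{\op{G}_K})=H^1(\Delta,D)$, and since $D$ is a $p$-group while $p\nmid\#\Delta=[K:\Q]$ by the first condition of Assumption \ref{main ass}, this cohomology group is killed both by $\#\Delta$ and by a power of $p$, hence vanishes. Thus it suffices to show that $\op{res}$ annihilates $H^1_{\op{nr}}(\Q,D)$.

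Next, given $c\in H^1_{\op{nr}}(\Q,D)$, I would note that $\op{res}(c)$ is unramified at every prime of $K$, since the inertia group at a prime of $K$ embeds into the inertia group at the prime of $\Q$ below it; and because $\op{G}_K$ acts trivially on $D$, the class $\op{res}(c)$ corresponds to a continuous homomorphism $\op{G}_K\to D$ that is unramified everywhere. As $\op{G}_K$ is profinite and $D$ is discrete, such a homomorphism has finite image, necessarily a finite abelian $p$-group, and therefore factors through the Galois group $\op{Gal}(H_p(K)/K)$ of the maximal unramified abelian $p$-extension of $K$, which is isomorphic to $\op{Cl}_p(K)$. The hypothesis $p\nmid\#\op{Cl}(K)$ forces $\op{Cl}_p(K)=0$, so $\op{res}(c)=0$, and the injectivity from the previous step then gives $c=0$; hence $H^1_{\op{nr}}(\Q,D)=0$.

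I do not expect any real obstacle here: the argument is entirely standard, and the only points requiring (routine) care are the vanishing of $H^1(\Delta,D)$, which uses precisely that $p\nmid[K:\Q]$ together with the $p$-primary nature of $D$, and the class field theory identification of everywhere-unramified homomorphisms $\op{G}_K\to D$ with $\op{Hom}(\op{Cl}_p(K),D)$ --- in particular checking that a class unramified at all primes of $\Q$ restricts to a class unramified at all primes of $K$.
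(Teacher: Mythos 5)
Your proof is correct and follows essentially the same route as the paper: use inflation--restriction together with $p\nmid[K:\Q]$ to reduce to everywhere-unramified homomorphisms $\op{G}_K\to D$, then observe these factor through (the $p$-part of) the class group, which is trivial by hypothesis. The only cosmetic difference is that the paper phrases the second step via the Hilbert class field $H_K$ rather than the $p$-Hilbert class field $H_p(K)$, which amounts to the same thing since $D$ is $p$-primary.
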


\begin{proof}
    Since $p\nmid \# \Delta$, we find that $H^1(\Delta, D^{\op{G}_K})=0$, and thus from the inflation-restriction sequence, $H^1(\Q, D)$ injects into $\op{Hom}\left(\op{G}_K^{\op{ab}}, D\right)^\Delta$. Let $H_K$ be the Hilbert class field of $K$. We find that $H^1_{\op{nr}}(\Q, D)$ injects into $\op{Hom}(\op{Gal}(H_K/K), D)^\Delta$. Since $p\nmid \#\op{Cl}(K)$, we find that 
    \[\op{Hom}(\op{Gal}(H_K/K), D)=0,\] and hence, $H^1_{\op{nr}}(\Q, D)=0$.
\end{proof}

\begin{theorem}\label{cor 3.10}
 Assume that  
     \begin{enumerate}[label=(\roman*)]
        \item the conditions of Assumption \ref{main ass} are satisfied.
        \item $H^0(\Delta_{\p}, D/D^{\epsilon_\p})=0$,
        \item $K$ is $p$-rational,
        \item $p\nmid \#\op{Cl}(K)$.
    \end{enumerate}
 Then, $S_{\chi, \epsilon}(\Q_\infty)=0$.
\end{theorem}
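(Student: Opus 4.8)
The plan is to deduce this from Theorem~\ref{conditions GV}, whose hypotheses (i) and (ii) are identical to the ones assumed here. By that theorem, it suffices to verify the first of the three equivalent conditions there, namely that $H^1_{\op{nr}}(\Q, D)=0$ and that $\left(\cU_{p, \cO}^{[\epsilon]}\right)^\chi=\left(\bar{U}_{p, \cO}^{[\epsilon]}\right)^\chi$. The first of these two equalities is immediate: since we are assuming $p\nmid \#\op{Cl}(K)$, Lemma~\ref{lemma4.10} applies verbatim and gives $H^1_{\op{nr}}(\Q, D)=0$.

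It remains to show that $\left(\cU_{p, \cO}^{[\epsilon]}\right)^\chi=\left(\bar{U}_{p, \cO}^{[\epsilon]}\right)^\chi$, and this is where the $p$-rationality hypothesis enters. By Proposition~\ref{prop4.8}(1) we already know $\left(\bar{U}_{p, \cO}^{[\epsilon]}\right)^\chi$ has finite index in $\left(\cU_{p, \cO}^{[\epsilon]}\right)^\chi$, so the content is that this index is trivial. The key point is the definition of $p$-rationality in the sense of \cite{movahhedi1990arithmetique}: it is equivalent to the statement that the map $\lambda_p: U_K\otimes\Z_p\to \cU_p$ has the property that $\cU_p/\overline{\lambda_p(U_K\otimes\Z_p)}$ is $\Z_p$-free of the expected rank, or equivalently (after also using $p\nmid\#\op{Cl}(K)$) that the Galois group of the maximal abelian pro-$p$ extension of $K$ unramified outside $p$ is $\Z_p$-free of rank $r_2(K)+1$; this forces $\bar{U}_{p,\cO}$ to have full index in $\cU_{p,\cO}$ modulo the part detecting class-group and torsion obstructions. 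Concretely, I would argue that under $p\nmid \#\op{Cl}(K)$ and $p\nmid[K:\Q]$, $p$-rationality of $K$ implies $\cU_{p,\cO}=\bar{U}_{p,\cO}$ as $\cO[\Delta]$-modules up to a piece that does not meet the $\epsilon_\p$-eigenspace, and in particular the $\chi$-isotypic component of the $[\epsilon]$-part satisfies $\left(\cU_{p, \cO}^{[\epsilon]}\right)^\chi=\left(\bar{U}_{p, \cO}^{[\epsilon]}\right)^\chi$. Taking $\chi$-isotypic components is exact because $p\nmid\#\Delta$, so this reduction is harmless.

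The main obstacle is making the link between the abstract definition of $p$-rationality and the concrete equality of $\chi$-isotypic local-unit modules precise: one must check that the cokernel of $\lambda_p$, which $p$-rationality controls, has trivial intersection with the relevant eigenspace, using that hypothesis (ii) ($H^0(\Delta_\p, D/D^{\epsilon_\p})=0$) rules out the trivial-character contribution on the complementary piece. Once that bookkeeping is done, the equality follows and Theorem~\ref{conditions GV} yields $S_{\chi, \epsilon}(\Q_\infty)=0$. I would organize the write-up as: (1) invoke Lemma~\ref{lemma4.10} for the unramified term; (2) recall the precise formulation of $p$-rationality and translate it into a statement about $\cU_{p,\cO}/\bar U_{p,\cO}$; (3) pass to $\chi$-isotypic components of the $[\epsilon]$-part and conclude the needed equality; (4) apply Theorem~\ref{conditions GV}.
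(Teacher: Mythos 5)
Your overall route is the same as the paper's: reduce to Theorem~\ref{conditions GV}, dispose of $H^1_{\op{nr}}(\Q, D)$ via Lemma~\ref{lemma4.10} using $p\nmid\#\op{Cl}(K)$, and use $p$-rationality for the local-unit equality. Your four-step outline at the end is exactly the paper's proof.

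However, the way you articulate the middle step is imprecise in a way that matters. You assert that $p$-rationality forces ``$\cU_{p,\cO}=\bar{U}_{p,\cO}$ up to a piece that does not meet the $\epsilon_\p$-eigenspace'' and that ``the cokernel of $\lambda_p$ \dots has trivial intersection with the relevant eigenspace.'' Neither is true in general: even for a $p$-rational field, $\cU_p/\bar{U}$ is a nonzero $\Z_p$-free module of rank $r_2+1$, and nothing rules out its having a nonzero $(\chi,[\epsilon])$-isotypic component. The correct mechanism is simpler and does not involve avoiding the eigenspace. When $p\nmid\#\op{Cl}(K)$, class field theory identifies $G_p^{\op{ab}}$ with $\cU_p/\bar{U}$; $p$-rationality says this has no $p$-torsion, so $\cU_{p,\cO}/\bar{U}_{p,\cO}$ is torsion-free over $\cO$. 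Since $p\nmid\#\Delta$, passing to the $(\chi,[\epsilon])$-part is exact and exhibits $\left(\cU_{p, \cO}^{[\epsilon]}\right)^\chi/\left(\bar{U}_{p, \cO}^{[\epsilon]}\right)^\chi$ as a direct $\cO$-summand of that torsion-free module; Proposition~\ref{prop4.8}(1) says this summand is finite, and a finite summand of a torsion-free $\cO$-module is zero. It is the combination finite-and-torsion-free that kills it, not disjointness from the eigenspace. Finally, hypothesis~(ii), that $H^0(\Delta_\p, D/D^{\epsilon_\p})=0$, plays no role in this step; it is only needed to make Theorem~\ref{conditions GV} applicable, so invoking it to ``rule out the trivial-character contribution'' is a red herring.
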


\begin{proof}
   Since $p\nmid \# \op{Cl}(K)$, it follows from Lemma \ref{lemma4.10} that $H^1_{\op{nr}}(\Q, D)=0$. Let $G_p$ be the maximal pro-$p$ extension of $K$ which is unramified outside $p$. Since $p\nmid \# \op{Cl}(K)$, we find that $G_p^{\op{ab}}$ is isomorphic to $\cU_p/\bar{U}$. On the other hand, $K$ is $p$-rational if and only if the Leopoldt conjecture is true at $p$ and there is no $p$ torsion in $G_p^{\op{ab}}$ (cf. \cite{maire2020note} or \cite{movahhedi1990arithmetique}). This implies that 
   \[\left(\cU_{p, \cO}^{[\epsilon]}\right)^\chi/\left(\bar{U}_{p, \cO}^{[\epsilon]}\right)^\chi=0.\] The results then follow from Theorem \ref{conditions GV}.
\end{proof}
\subsection{Special cases}
\par In this subsection, we consider certain special cases. First, we consider some $2$-dimensional Artin representations of dihedral type. Let $(L, \zeta)$ be a pair, where $L$ is an imaginary quadratic extension of $\Q$ and $\zeta:\op{G}_L\rightarrow \bar{\Q}^\times$ is a character. Consider the $2$-dimensional Artin representation 
\[\rho=\rho_{(L, \zeta)}:=\op{Ind}_{\op{G}_L}^{\op{G}_\Q}\zeta.\] When restricted to $L$, the representation $\rho$ decomposes into a direct sum of characters 
\[\rho_{|\op{G}_L}=\mtx{\zeta}{}{}{\zeta'}.\] Here, $\zeta'$ is given as follows
\[\zeta'(x)=\zeta(c x c^{-1}), \] where $c$ denotes the complex conjugation. We remark that the representation $\rho$ is a self-dual representation of dihedral type if and only if $\zeta'=\zeta^{-1}$. Let $p$ be an odd prime number which splits in $L$ as a product $p\cO_L=\p \p^*$. We set $\epsilon_\p:=\zeta_{|\op{G}_\p}$ and note that $\epsilon_{\p^*}=\zeta'_{|\op{G}_\p}$. The field $K=L(\zeta, \zeta')$ is the extension of $L$ generated by $\zeta$ and $\zeta'$. Choose an extension $\mathcal{F}/\Q_p$ containing $\Q_p(\chi, \epsilon_\p)$ and set $\cO$ to denote its valuation ring, and $D$ the associated $\cO$-divisible module. With respect to such a choice, we let $S_{\chi, \epsilon}(\Q_\infty)$ be the associated the Selmer group over the cyclotomic $\Z_p$-extension.
\begin{theorem}\label{thm 4.12}
    With respect to the notation above, assume that the following conditions hold
    \begin{enumerate}[label=(\roman*)]
        \item $\rho$ is irreducible,
        \item the order of $\zeta$ is coprime to $p$,
        \item $\epsilon_\p$ is nontrivial and $\epsilon_\p\neq \epsilon_{\p^*}$. 
    \end{enumerate}
    Then, the following conditions are equivalent
    \begin{enumerate}
        \item $H^1_{\op{nr}}(\Q, D)=0$ and $\left(\cU_{p, \cO}^{[\epsilon]}\right)^\chi=\left(\bar{U}_{p, \cO}^{[\epsilon]}\right)^\chi$.
        \item $\mu_{\chi, \epsilon}=\lambda_{\chi, \epsilon}=0$.
         \item $S_{\chi, \epsilon}(\Q_\infty)=0$.
    \end{enumerate}
    If $p\nmid h_K$ and $K$ is $p$-rational, then, the above conditions are satisfied. 
\end{theorem}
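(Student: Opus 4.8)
The plan is to deduce the theorem entirely from Theorem~\ref{conditions GV} and Theorem~\ref{cor 3.10}: the former furnishes the equivalence of conditions (1), (2), (3), and the latter gives the vanishing of $S_{\chi,\epsilon}(\Q_\infty)$ under the two extra hypotheses in the last sentence. So the whole task reduces to checking that the dihedral representation $\rho=\op{Ind}_{\op{G}_L}^{\op{G}_\Q}\zeta$, together with the prime $\p\mid p$ and the character $\epsilon_\p=\zeta_{|\op{G}_\p}$, satisfies the standing hypotheses of those two results, namely that Assumption~\ref{main ass} holds and that $H^0(\Delta_\p, D/D^{\epsilon_\p})=0$. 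Note that hypothesis~(i), the irreducibility of $\rho$ (equivalently $\zeta\neq\zeta'$), is what makes the Galois-stable lattice $T\subset V$, and hence $D=V/T$ and $D^{\epsilon_\p}$, well defined; it is in any case implied by~(iii).

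First I would verify Assumption~\ref{main ass}. The representation $\rho$ factors through $\Delta=\op{Gal}(K/\Q)$ with $K=L(\zeta,\zeta')$, and $[K:\Q]=2\,[K:L]$, where $\op{Gal}(K/L)$ embeds into the product of the cyclic images of $\zeta$ and $\zeta'$. Since $\zeta'$ has the same order as $\zeta$, which is prime to $p$ by~(ii), and since $2$ is prime to the odd prime $p$, we obtain $p\nmid[K:\Q]$, which is part~(1). For part~(2) I would restrict $\rho$ to a decomposition group $\Delta_v\simeq\op{Gal}(\mathbb{C}/\mathbb{R})$ at an archimedean place and apply Mackey's formula: since $L$ is imaginary, the infinite place of $\Q$ does not split in $L$, so $\rho_{|\Delta_v}$ is the induction of the trivial character from the trivial subgroup, that is, the regular representation of $\mathbb{Z}/2\mathbb{Z}$, in which the trivial character occurs with multiplicity one; hence $d^+=1$. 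For part~(3), since $p$ splits in $L$ as $\p\p^*$, Mackey's formula gives $\rho_{|\Delta_\p}\simeq\epsilon_\p\oplus\epsilon_{\p^*}$ as a direct sum of two one-dimensional characters, which are distinct by~(iii); hence $\epsilon_\p$ occurs in $\rho_{|\Delta_\p}$ with multiplicity one, which supplies the required character and forces $\dim_\cF V^{\epsilon_\p}=1$.

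Next I would check $H^0(\Delta_\p, D/D^{\epsilon_\p})=0$. By the decomposition just obtained, $D/D^{\epsilon_\p}$ is a cofree $\cO$-module of corank one on which $\Delta_\p$ acts through $\epsilon_{\p^*}$. Since $\rho$ is of dihedral (self-dual) type we have $\zeta'=\zeta^{-1}$, so $\epsilon_{\p^*}=\epsilon_\p^{-1}$, which is nontrivial by~(iii) and of order prime to $p$ by~(ii). Picking $\sigma\in\Delta_\p$ with $\epsilon_{\p^*}(\sigma)\neq 1$, the scalar $\epsilon_{\p^*}(\sigma)-1$ is one less than a root of unity of order prime to $p$, hence a unit in $\cO$, so $\sigma-1$ acts invertibly on $D/D^{\epsilon_\p}$ and in particular $H^0(\Delta_\p, D/D^{\epsilon_\p})=0$. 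With Assumption~\ref{main ass} and this vanishing in hand, Theorem~\ref{conditions GV} applies and gives the equivalence of (1), (2) and (3). For the last sentence, the hypotheses $p\nmid h_K=\#\op{Cl}(K)$ and $K$ $p$-rational are exactly the remaining inputs of Theorem~\ref{cor 3.10}, which then forces $S_{\chi,\epsilon}(\Q_\infty)=0$; so condition (3), and therefore all three conditions, holds.

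The one point that needs real care is the bookkeeping inside Assumption~\ref{main ass}: using the identification $\op{Gal}(\cK/\Q_p)\simeq\Delta_\p$ consistently, checking that $\cF$ has been chosen to contain $\Q_p(\chi,\epsilon_\p)$, and carrying out the two Mackey computations correctly (at the archimedean place, where it is crucial that $L$ is imaginary so that $d^+$ comes out to $1$, and at $p$, where it is crucial that $p$ splits in $L$ so that $\rho_{|\Delta_\p}$ breaks up into two characters). Everything else is a formal consequence of the results assembled in Sections~\ref{s 2}--\ref{s 3}, so I do not anticipate any further obstacle.
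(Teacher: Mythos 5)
Your proof is correct and follows essentially the same route as the paper's: verify the three conditions of Assumption~\ref{main ass} and the vanishing $H^0(\Delta_\p, D/D^{\epsilon_\p})=0$, then quote Theorem~\ref{conditions GV} for the equivalence and Theorem~\ref{cor 3.10} for the last sentence. The one place where you are more explicit than the paper is the vanishing of $H^0(\Delta_\p, D/D^{\epsilon_\p})$: the paper simply asserts that $\epsilon_{\p^*}$ is nontrivial, while you correctly trace this back to the standing dihedral hypothesis $\zeta'=\zeta^{-1}$ (so that $\epsilon_{\p^*}=\epsilon_\p^{-1}$ inherits nontriviality from hypothesis~(iii)) and then use the coprimality of the order to $p$ to conclude that $\epsilon_{\p^*}(\sigma)-1$ is a unit in $\cO$. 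This is a genuine clarification of a step the paper leaves tacit, and your Mackey computation of $d^+=1$ is the same argument as the paper's explicit $2\times 2$ matrix for complex conjugation, just phrased more structurally. No gaps.
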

\begin{proof}
    The result follows from Theorem \ref{conditions GV} once we show that
    \begin{enumerate}[label=(\roman*)]
        \item the conditions of Assumption \ref{main ass} are satisfied.
        \item $H^0(\Delta_{\p}, D/D^{\epsilon_\p})=0$.
    \end{enumerate}
    Consider the conditions for Assumption \ref{main ass} to hold.
    \begin{enumerate}
        \item The field $K$ is the extension of $L$ cut out by $\zeta$ and $\zeta'$. Since the order of $\zeta$ is assumed to be coprime to $p$, the same holds for $\zeta'$. Since $p$ is odd, it follows that $p\nmid [K:\Q]$. 
        \item Since $K$ is an imaginary quadratic field complex conjugation acts via the matrix $\mtx{}{1}{1}{}$, whose eigenvalues are $1$ and $-1$ respectively. Therefore, $d^+=d^-=1$. 
        \item The third condition follows from the assumption that $\epsilon_\p$ is nontrivial and $\epsilon_\p\neq \epsilon_{\bar{\p}}$. 
    \end{enumerate}
    Finally, we note that since $\epsilon_{\bar{\p}}$ is nontrivial, $H^0(\Delta_\p, D/D^{\epsilon_\p})=0$. 

    \par By the proof of Theorem \ref{cor 3.10}, we note that $H^1_{\op{nr}}(\Q, D)=0$ and $\left(\cU_{p, \cO}^{[\epsilon]}\right)^\chi=\left(\bar{U}_{p, \cO}^{[\epsilon]}\right)^\chi$ if $p$ is such that $p\nmid h_K$ and $K$ is $p$-rational, then, the above conditions are satisfied. 
\end{proof}

\par Next, we consider the group of rotational symmetries of the icosahedron. This group is simply $A_5$. An orthogonal set is a set of $6$ points, so that three pairwise orthogonal lines can be drawn between pairs of them. The midpoints of the $30$ edges of an icosahedron can be partitioned into $5$ orthogonal sets such that these sets are permuted by $A_5$. Let $g\in A_5$ be an element that corresponds to rotation of the icosahedron with axis $(x,y,z)$ and angle $\theta$. Then the corresponding matrix is 
\[ \left( {\begin{array}{ccc}
 \cos \theta+(1-\cos \theta)x^2 & (1-\cos \theta)xy-z\sin \theta & (1-\cos \theta)xz+y\sin \theta\\
 (1-\cos \theta)xy+z\sin \theta & \cos \theta+(1-\cos \theta)x^2 &  (1-\cos \theta)yz-x\sin \theta \\
 (1-\cos \theta)xz-y\sin \theta & (1-\cos \theta)yz+x\sin \theta &  \cos \theta+(1-\cos \theta)z^2. 
 \end{array} } \right)\]
This gives a representation 
\[r:A_5\rightarrow \op{GL}_3(\bar{\Q}).\] Let $K/\Q$ be a Galois extension with Galois group $\op{Gal}(K/\Q)$ isomorphic to $A_5$, and $\varrho$ be the composite 
\[\varrho:\op{G}_\Q\rightarrow \op{Gal}(K/\Q)\xrightarrow{\sim} A_5\xrightarrow{r}\op{GL}_3(\bar{\Q}),\] where the initial map is the quotient map.
 Any $5$-cycle corresponds to a rotation by an angle of $\frac{2\pi}{5}$, and therefore has eigenvalues $e^{\frac{2\pi i}{5}}, e^{-\frac{2\pi i}{5}}, 1$. Let $g\in A_5$ be a $5$-cycle and $D$ be the group generated by $g$, and $L:=K^D$ be the field fixed by $D$. Let $p\geq 7$ be a prime which is split in $L$ and is inert in $K/L$. Then, the restriction to the decomposition group at $p$ is of the form $\op{diag}\left(\alpha_p, \alpha_p^{-1}, 1\right)$, where $\alpha_p$ is an unramified character of order $5$. Let $\psi:\op{G}_\Q\rightarrow \cO^\times$ be an even character which is ramified at $p$, and set $\rho:=\varrho\otimes \psi$.

 \begin{theorem}\label{example thm 2}
     With respect to the above notation, the following conditions are equivalent
\begin{enumerate}
        \item $H^1_{\op{nr}}(\Q, D)=0$ and $\left(\cU_{p, \cO}^{[\epsilon]}\right)^\chi=\left(\bar{U}_{p, \cO}^{[\epsilon]}\right)^\chi$.
        \item $\mu_{\chi, \epsilon}=\lambda_{\chi, \epsilon}=0$.
         \item $S_{\chi, \epsilon}(\Q_\infty)=0$.
    \end{enumerate}
    If $p\nmid h_K$ and $K$ is $p$-rational, then, the above conditions are satisfied. 
 \end{theorem}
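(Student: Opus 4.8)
The plan is to deduce the statement from the general results of Sections \ref{s 2} and \ref{s 3}: the equivalence of (1), (2) and (3) from Theorem \ref{conditions GV}, and the sufficiency of ``$p\nmid h_K$ and $K$ is $p$-rational'' from the argument proving Theorem \ref{cor 3.10}. All of this applies to $\rho=\varrho\otimes\psi$ as soon as one checks that $\rho$ satisfies the hypotheses of Assumption \ref{main ass} and that $H^0(\Delta_\p,D/D^{\epsilon_\p})=0$ for an appropriate choice of $\epsilon_\p$; so the only real content is this verification, after which there is nothing further to prove.

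To check Assumption \ref{main ass}: irreducibility of $\rho$ is automatic, since $\varrho$ is one of the two irreducible three-dimensional representations of $A_5$ and a character twist preserves irreducibility, so a Galois stable $\cO$-lattice exists. For (1), the field $K'$ cut out by $\rho$ satisfies $[K':\Q]\mid 60\cdot\op{ord}(\psi)$; since $p\geq 7$ does not divide $60=\#A_5$, and $\psi$ (being tamely ramified at $p$) has order prime to $p$, we get $p\nmid[K':\Q]$. For (2), the image in $A_5$ of a complex conjugation $c$ is a double transposition, which under $r$ is a rotation by $\pi$ and hence has eigenvalues $1,-1,-1$; twisting by the even character $\psi$ (for which $\psi(c)=1$) does not change these eigenvalues, so $d^+=1$. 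For (3), the conditions on $p$ give $\varrho_{|\Delta_\p}=\op{diag}(\alpha_p,\alpha_p^{-1},1)$ with $\alpha_p$ unramified of exact order $5$, whence $\rho_{|\Delta_\p}=\op{diag}(\alpha_p\psi_p,\alpha_p^{-1}\psi_p,\psi_p)$ with $\psi_p:=\psi_{|\Delta_\p}$; these three characters are pairwise distinct because $\alpha_p$ has order $5$, so $\epsilon_\p:=\psi_p$ occurs in $\rho_{|\Delta_\p}$ with multiplicity $1$. With this choice $\Delta_\p$ acts on $D/D^{\epsilon_\p}$ through $\alpha_p\psi_p\oplus\alpha_p^{-1}\psi_p$; both summand characters are non-trivial (each is ramified, while $\alpha_p^{\pm1}$ alone is unramified) and of order prime to $p$, so $H^0(\Delta_\p,D/D^{\epsilon_\p})=0$.

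Granting this, Theorem \ref{conditions GV} yields the equivalence of (1)--(3). For the last assertion, the proof of Theorem \ref{cor 3.10} shows that if $p$ does not divide the class number of $K'$ and $K'$ is $p$-rational, then $H^1_{\op{nr}}(\Q,D)=0$ and $(\cU_{p,\cO}^{[\epsilon]})^\chi=(\bar{U}_{p,\cO}^{[\epsilon]})^\chi$, hence $S_{\chi,\epsilon}(\Q_\infty)=0$. I expect the one point that genuinely needs care to be the discrepancy between the $A_5$-field $K$ named in the statement and the field $K'=K\cdot\Q(\psi)$ through which $\rho$ actually factors: Theorem \ref{cor 3.10} is naturally phrased in terms of the field cut out by the representation, so one must transfer $p$-rationality and the coprimality $p\nmid h$ between $K$ and $K'$, using that $\Q(\psi)$ has degree prime to $p$ over $\Q$ and is ramified only at $p$ (equivalently, read the final hypotheses as conditions on $K'$). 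No further arithmetic is needed; the remaining steps are a mechanical translation of the icosahedral geometry into the hypotheses of the machinery already developed.
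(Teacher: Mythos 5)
Your overall route matches the paper's: verify the three conditions of Assumption \ref{main ass} together with $H^0(\Delta_\p,D/D^{\epsilon_\p})=0$, then cite Theorem \ref{conditions GV} for the equivalences and the argument behind Theorem \ref{cor 3.10} for the sufficiency of $p$-rationality plus $p\nmid h_K$. The steps verifying conditions (1) and (2) of Assumption \ref{main ass} (using $p\geq 7$ for coprimality and the fact that complex conjugation acts by a rotation through $\pi$, hence with eigenvalues $1,-1,-1$) are the same as in the paper.

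There is, however, a substantive discrepancy in condition (3). You take $\epsilon_\p := \psi_p$, so that $D/D^{\epsilon_\p}$ carries $\alpha_p\psi_p\oplus\alpha_p^{-1}\psi_p$. The paper instead takes $\epsilon_\p := \alpha_p\psi$, so that $D/D^{\epsilon_\p}$ carries $\alpha_p^{-1}\psi\oplus\psi$. Both choices occur with multiplicity one and both make $H^0(\Delta_\p, D/D^{\epsilon_\p})$ vanish, so either leads to a valid theorem by the same argument. But the Selmer group $S_{\chi,\epsilon}(\Q_\infty)$ genuinely depends on which character $\epsilon_\p$ is singled out to define the local condition at $p$, so your statement and the paper's concern different Selmer groups. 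Since the theorem's phrase ``with respect to the above notation'' is only disambiguated inside the proof, you should either match the paper's choice $\epsilon_\p=\alpha_p\psi$ or state explicitly which $\epsilon$ your version of the theorem is about.

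On the other hand, your final observation is a real improvement on the paper. Since $A_5$ is a non-abelian simple group it has no nontrivial characters, so $\psi$ cannot factor through $\op{Gal}(K/\Q)$; thus $\rho=\varrho\otimes\psi$ factors through the strictly larger field $K'=K\cdot\Q(\psi)$, not through $K$. Theorem \ref{cor 3.10} is stated for the field cut out by $\rho$, so the hypotheses ``$p\nmid h_K$ and $K$ is $p$-rational'' in Theorem \ref{example thm 2} should really be imposed on $K'$, or one must supply an argument transferring these conditions from $K$ to $K'$ (using that $[\Q(\psi):\Q]$ is prime to $p$ and $\Q(\psi)/\Q$ is ramified only at $p$). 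The paper does not address this; you correctly flag it but do not close the gap, so this point still needs to be filled in before the last sentence of the theorem is fully justified.
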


\begin{proof}
As in the proof of Theorem \ref{thm 4.12}, the result follows from Theorem \ref{conditions GV} once we show that
    \begin{enumerate}[label=(\roman*)]
        \item the conditions of Assumption \ref{main ass} are satisfied.
        \item $H^0(\Delta_{\p}, D/D^{\epsilon_\p})=0$.
    \end{enumerate}
    Let us verify the conditions of the Assumption \ref{main ass}.
    \begin{enumerate}
        \item Since it is assumed that $p\geq 7$, it follows that $p\nmid \# A_5$, and hence, $p\nmid [K:\Q]$.
        \item Let $v$ be an archidean prime, and $\sigma_v$ be a generator of the decomposition group at $v$. Then, $\rho(\sigma_v)=\varrho(\sigma_v) \psi(\sigma_v)=\varrho(\sigma_v)$. Since $\varrho$ only gives rise to rotational symmetries and $\varrho(\sigma_v)$ has order $2$, it must correspond to the rotation with $\theta=\pi$. This means that it is conjugate to $\op{diag}(1,-1,-1)$, and hence, $d^+=1$.
        \item For the third condition, it has been arranged that $\varrho_{|\Delta_\p}=\op{diag}(\alpha_p, \alpha_p^{-1}, 1)$, where $\alpha_p$ is an unramified character of order $5$, and hence, $\epsilon_p:=\alpha_p \psi$ is a ramified character which occurs with multiplicity $1$.
    \end{enumerate}
    Finally, we note that $H^0(\Delta_\p, D/D^{\epsilon_p})=0$ since the characters $\alpha_p^{-1}\psi$ and $\psi$ are nontrivial. This is because $\psi$ is ramified at $p$, while $\alpha_p$ is not. 
\end{proof}

\section{Distribution questions for Artin representations}\label{s 4}

\subsection{Iwasawa theory of class groups}
\par In this section, we introduce and study some natural distribution questions for Iwasawa invariants of class groups. These discussions serve to motivate similar questions for Artin representations, which we discuss in the next subsection. Given a number field $K$ and a prime number $p$, the Iwasawa invariants $\mu_p(K)$ and $\lambda_p(K)$ are natural invariants to consider associated with the growth of $p$-primary parts of class groups in the cyclotomic extension of $K$. One considers the following question.
\begin{question}
    Given an imaginary quadratic $K/\Q$, how does $\lambda_p(K)$ vary as $p\rightarrow \infty$? In other words, for a given $\lambda\in \Z_{\geq 0}$, what can be said about the upper and lower densities of the set of primes $p$ for which $\lambda_p(K)=\lambda$.
\end{question}
We call the above set $\cF_{ \lambda}$, and its upper (resp. lower) density $\bar{\mathfrak{d}}_{\lambda}$ (resp. $\underline{\mathfrak{d}}_{\lambda}$).
We note that if $p$ is inert or ramified in $K$ and $p\nmid h_K$, then, $h_p(K_n)=0$ for all $n$. In particular, $\lambda_p(K)=0$. This implies in particular that $\underline{\mathfrak{d}}_{0}\geq  \frac{1}{2}$. On the other hand, if $p$ splits in $K$, then, $r_{p,K}\geq 1$ and thus, $\lambda_p(K)\geq 1$. This implies that $\mathfrak{d}_0=\frac{1}{2}$. Let $p$ be a prime which splits in $K$ for which $p\nmid h_K$. The former condition is satisfied by $\frac{1}{2}$ of the primes and the latter condition is satisfied for all but finitely many primes. The following result is a corollary to Gold's criterion. 

\begin{corollary}
    Let $p$ be an odd prime number which splits into $\p \p^*$ in $\cO_K$. Suppose that $r>1$ is an integer not divisible by $p$ and such that $\p^r=(\alpha)$. Then, the following conditions are equivalent
    \begin{enumerate}
        \item $\lambda_p(K)>1$,
        \item $\op{Tr}(\alpha)^{p-1}\equiv 1\mod{p^2}$. 
    \end{enumerate}
\end{corollary}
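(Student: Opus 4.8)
The plan is to read this off from Gold's criterion, stated above, under the standing hypotheses of the surrounding discussion (in particular $K$ imaginary quadratic and $p\nmid h_K$, the latter being needed to invoke that criterion). Gold's criterion says that $\lambda_p(K)>1$ if and only if $\alpha^{p-1}\equiv 1 \pmod{(\p^*)^2}$, where $\p^*$ denotes the complex conjugate prime and $\alpha$ generates $\p^r$. Since the given $r$ satisfies $p\nmid r$ and $\p^r=(\alpha)$, Gold's criterion applies verbatim, and it remains only to show that the congruence $\alpha^{p-1}\equiv 1 \pmod{(\p^*)^2}$ in $\cO_K$ is equivalent to the congruence $\op{Tr}(\alpha)^{p-1}\equiv 1 \pmod{p^2}$ in $\Z$, where $\op{Tr}=\op{Tr}_{K/\Q}$.

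The key step uses the extra hypothesis $r>1$. Write $\sigma$ for the nontrivial automorphism of $K/\Q$ (complex conjugation), so $\p^*=\sigma(\p)$ and $\op{Tr}(\alpha)=\alpha+\sigma(\alpha)$. Applying $\sigma$ to $(\alpha)=\p^r$ shows $\sigma(\alpha)$ generates $(\p^*)^r$; since $r\geq 2$,
\[\sigma(\alpha)\in (\p^*)^r\subseteq (\p^*)^2, \qquad \text{hence} \qquad \op{Tr}(\alpha)\equiv \alpha \pmod{(\p^*)^2}.\]
I would also record that $\alpha\notin \p^*$, since the prime $\p^*\neq\p$ does not divide $(\alpha)=\p^r$; thus $\op{Tr}(\alpha)\equiv\alpha\not\equiv 0 \pmod{\p^*}$, so $\op{Tr}(\alpha)$ is prime to $p$ and $\op{Tr}(\alpha)^{p-1}\bmod p^2$ is well posed (and automatically $\equiv 1\bmod p$).

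The remaining step is purely local: $p$ splits in $K$, so the completion at $\p^*$ is $\Q_p$, and reduction gives a ring isomorphism $\cO_K/(\p^*)^2\cong \Z/p^2\Z$ under which every rational integer goes to its class modulo $p^2$. Transporting $\op{Tr}(\alpha)\equiv \alpha \pmod{(\p^*)^2}$ across this isomorphism identifies the image of $\alpha$ in $\Z/p^2\Z$ with $\op{Tr}(\alpha)\bmod p^2$, so $\alpha^{p-1}\equiv 1 \pmod{(\p^*)^2}$ holds exactly when $\op{Tr}(\alpha)^{p-1}\equiv 1 \pmod{p^2}$. Combining with Gold's criterion gives the stated equivalence.

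I do not expect a genuine obstacle here; the content is a translation of Gold's criterion through the conjugate prime. The two points that merit a sentence of care are that $r>1$ is precisely what puts $\sigma(\alpha)$ into $(\p^*)^2$ (for $r=1$ the trace trick fails), and that the criterion is insensitive to the choice of generator $\alpha$ of $\p^r$ — this changes $\alpha$ by a unit of $\cO_K$, but since $p$ is odd and splits in $K$ one has $u^{p-1}\equiv 1 \pmod{(\p^*)^2}$ for every $u\in\cO_K^\times$, so the condition is unchanged (and in any case this ambiguity is already implicit in Gold's criterion).
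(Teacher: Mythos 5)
Your proof is correct, and since the paper states this corollary without supplying a proof there is nothing to compare against; your argument is the natural one. You correctly isolate the two points that do real work: the standing hypothesis $p\nmid h_K$ (needed to invoke Gold's criterion at all), and the hypothesis $r>1$, which places $\sigma(\alpha)\in(\p^*)^r\subseteq(\p^*)^2$ and hence gives $\op{Tr}(\alpha)\equiv\alpha\pmod{(\p^*)^2}$; the rest is the identification $\cO_K/(\p^*)^2\cong\Z/p^2\Z$ coming from $p$ being split. Your closing remark that the criterion is independent of the generator is a sensible sanity check; if you want to spell it out, note that $\cO_K^\times$ is cyclic of order $w\in\{2,4,6\}$, and in each of the three cases the splitting of $p$ forces $w\mid p-1$, so $u^{p-1}=1$ exactly.
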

 Indeed we find that for each number $a_0\in [1, p-1]$, there is precisely one congruence class $a$ modulo-$p^2$ such that:
 \begin{itemize}
     \item $a\equiv a_0\pmod{p}$, and
     \item $a^{p-1}\equiv 1\mod{p^2}$.
 \end{itemize} Therefore, the probability that an integer $a\in \Z/p^2\Z$ satisfies the congruence $a^{p-1}\equiv 1\mod{p^2}$ is $\frac{p-1}{p^2}=\frac{1}{p}-\frac{1}{p^2}$. Let $N_K(X)$ be the number of primes $p\leq X$ such that $p$ is split in $K$ and $\lambda_p(K)>1$. Thus, the heuristic suggests that 
 \[N_K(X)\sim \sum_{p\in \Omega'(X)} \left(\frac{1}{p}-\frac{1}{p^2}\right)\sim \log \log X,\] where $\Omega'(X)$ is the set of primes $p\leq X$ that split in $K$. This leads us to the following expectation.
 \begin{conjecture}
     Let $M_p(X)$ be the number of primes $\leq X$ that split in $K$ for which $\lambda_p(K)>1$, then, $M_p(X)= \frac{1}{2}\log \log X+O(1)$.
 \end{conjecture}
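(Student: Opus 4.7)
The plan is to combine the corollary to Gold's criterion stated just above with a probabilistic heuristic, and to identify what needs to be made rigorous. By that corollary, for a split prime $p$ with $p \nmid h_K$, writing $(p) = \p\p^*$ in $\cO_K$ and choosing a positive integer $r$ with $\gcd(r,p) = 1$ and $\p^r = (\alpha)$, the inequality $\lambda_p(K) > 1$ is equivalent to
\[
\op{Tr}(\alpha)^{p-1} \equiv 1 \pmod{p^2}.
\]
Only finitely many split primes divide $h_K$, so their contribution to $M_p(X)$ is $O(1)$ and may be discarded from the outset.

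The next step is to turn the probabilistic reasoning in the excerpt into a sum over primes. The group $(\Z/p^2\Z)^\times$ is cyclic of order $p(p-1)$; since $p$ is coprime to $p-1$, its unique subgroup of order $p-1$ projects isomorphically onto $(\Z/p\Z)^\times$, so each fiber of the reduction map contains exactly one solution of $a^{p-1}\equiv 1 \pmod{p^2}$. Modelling $\op{Tr}(\alpha)\bmod p^2$ as a uniform random unit therefore gives event probability $(p-1)/p^2$. Summing over the set $\Omega'(X)$ of split primes in $[2,X]$, whose density in the primes is $\tfrac12$ by Chebotarev, and applying Mertens' theorem yields
\[
\sum_{p\in\Omega'(X)} \frac{p-1}{p^2} \;=\; \tfrac12 \log\log X + O(1),
\]
which matches the predicted main term. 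Under genuine independence of the events across $p$, a standard second-moment estimate would then recover the stated $O(1)$ error.

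The central obstacle, and the reason the statement appears as a conjecture rather than a theorem, is the equidistribution of $\op{Tr}(\alpha_p) \pmod{p^2}$ as $p$ ranges over the split primes. The congruence $a^{p-1} \equiv 1 \pmod{p^2}$ is precisely the condition for $p$ to be a Wieferich prime to the base $a$, and even the infinitude of non-Wieferich primes to a fixed base is not known unconditionally. A realistic line of attack would therefore be either to assume an abc-type hypothesis and try to deduce the upper bound $M_p(X) \ll \log\log X$ from a large-sieve argument applied to the family of congruence conditions indexed by $p$, or to relate the values $\op{Tr}(\alpha_p)\bmod p^2$ for varying $p$ to $p$-adic values of a single global object (for instance a Katz $p$-adic $L$-function on the anticyclotomic side) so that a known equidistribution statement can be invoked. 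Either approach would still fall short of the conjectured asymptotic with the exact constant $\tfrac12$, but the upper bound of the right order of magnitude seems to be the most tractable first target.
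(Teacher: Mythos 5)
The statement you are addressing is a \emph{conjecture} in the paper: the authors do not prove it, and their only justification is the heuristic you reproduce --- the corollary to Gold's criterion reduces $\lambda_p(K)>1$ (for split $p\nmid h_K$) to the Wieferich-type congruence $\op{Tr}(\alpha)^{p-1}\equiv 1 \pmod{p^2}$, each such congruence is modelled as holding with probability $(p-1)/p^2$, and summing over the split primes up to $X$ gives $\tfrac12\log\log X$. So your route is essentially the paper's, and you are right that the genuine obstruction is the equidistribution of $\op{Tr}(\alpha_p)\bmod p^2$ as $p$ varies (a Wieferich-type problem, open even for a fixed base), which is precisely why this is stated as a conjecture; if anything you are slightly more careful than the paper, whose displayed sum $\sum_{p\in\Omega'(X)}\bigl(\tfrac1p-\tfrac1{p^2}\bigr)$ is asserted to be $\sim\log\log X$ without the Chebotarev factor $\tfrac12$ that then appears in the conjecture itself. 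One caveat on your last step: even granting full independence of the events, a second-moment argument would not yield an error term of size $O(1)$ --- in the random model the variance of the count is itself $\sim\tfrac12\log\log X$, so typical fluctuations are of order $\sqrt{\log\log X}$; the $O(1)$ in the conjecture should be read as part of the optimistic normalization (an expectation-level statement), not something a sieve or second-moment estimate could deliver, and likewise the conditional upper bound $M_p(X)\ll\log\log X$ you propose as a first target is not currently within reach of abc-type hypotheses, which control fixed-base Wieferich primes rather than a base varying with $p$.
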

 In particular the set of primes $p$ for which $\lambda_p(K)>1$ is infinite of density $0$, and thus, the conjecture in particular predicts that $\mathfrak{d}_{1}=\frac{1}{2}$. Horie \cite{horie1987note} has proven the infinitude of primes $p$ such that $\lambda_p(K)>1$ and Jochnowitz \cite{jochnowitz1994p} on the other hand the infinitude of primes $p$ for which $\lambda_p(K)=1$.

 \par On the other hand, one can fix a prime and study the variation of $\lambda_p(K)$ as $K$ ranges over all imaginary quadratic fields. We separate this problem into two cases, namely that of imaginary quadratic fields in which $p$ is inert, and those in which $p$ splits. For the primes $p$ that are inert in $K$ Ellenberg, Jain and Venkatesh \cite{ellenberg2011modeling} make the following prediction based on random matrix heuristics. 
 \begin{conjecture}[Ellenberg, Jain, Venkatesh]
     Amongst all imaginary quadratic fields $K$ in which $p$ is inert, the proportion for which $\lambda_p(K)=r$ is equal to 
     \[p^{-r}\prod_{t>r}\left(1-p^{-t}\right).\]
 \end{conjecture}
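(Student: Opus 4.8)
This statement is a conjecture, so rather than a proof I would record the random-matrix heuristic, of Cohen--Lenstra--Friedman--Washington type, from which the density arises, and indicate precisely where rigor is missing. The starting point is that, by Ferrero and Washington, $\mu_p(K)=0$ for every imaginary quadratic field $K$; hence the characteristic element $f_K^-(T)$ is a distinguished polynomial of degree $\lambda_p(K)$, and its reduction modulo $p$ is $\overline{f_K^-}(T)=T^{\lambda_p(K)}$ in $\F_p\llbracket T\rrbracket$. Thus $\lambda_p(K)=\ord_T\overline{f_K^-}(T)$, and the problem is to understand the distribution, as $K$ ranges over imaginary quadratic fields in which $p$ is inert, of the $T$-adic order of the mod-$p$ reduction of the characteristic power series of $X_\infty^-$.

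The model I would set up is that, in this family, $X_\infty^-$ behaves like the cokernel of a Haar-random $\Lambda$-linear endomorphism $M$ of $\Lambda^n$, in the limit $n\to\infty$, for the matrix ensemble whose symmetry type reflects the minus-part structure together with the divisibility $T\mid f_K^-(T)$ forced in the inert case by Theorem \ref{order of vanishing}. Reducing modulo $p$, the characteristic ideal is modeled by $(\det\overline{M})$ with $\overline{M}$ Haar-random in $M_n(\F_p\llbracket T\rrbracket)$; since $\ord_T\det\overline{M}$ equals the length over the discrete valuation ring $\F_p\llbracket T\rrbracket$ of $\coker(\overline{M})$, the heuristic predicts that $\lambda_p(K)$ is distributed like $\len_{\F_p\llbracket T\rrbracket}\coker(\overline{M})$.

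The one step that is genuinely tractable is computing this distribution. By the $\F_p\llbracket T\rrbracket$-analogue of the Friedman--Washington theorem, $\coker(\overline{M})$ converges in distribution, as $n\to\infty$, to the Cohen--Lenstra measure on finite $\F_p\llbracket T\rrbracket$-modules, under which a module $A$ has mass $\big(\prod_{j\geq 1}(1-p^{-j})\big)\big/\#\Aut_{\F_p\llbracket T\rrbracket}(A)$. Pushing this measure forward along $A\mapsto\len(A)$ and using the identity
\[\sum_{A}\frac{x^{\len(A)}}{\#\Aut(A)}=\prod_{j\geq 1}\frac{1}{1-xp^{-j}}=\sum_{r\geq 0}\frac{x^{r}p^{-r}}{\prod_{i=1}^{r}(1-p^{-i})},\]
whose second equality is the $q$-binomial theorem with $q=p^{-1}$, one gets $\Pr[\len=r]=p^{-r}\prod_{t>r}(1-p^{-t})$; setting $x=1$ above also yields $\sum_{r\geq 0}p^{-r}\prod_{t>r}(1-p^{-t})=1$, a handy consistency check to carry out along the way. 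Matching with $\lambda_p(K)$ gives the stated formula. One caveat deserves flagging: the naive ensemble gives $\Pr[\lambda_p(K)=0]=\prod_{t>0}(1-p^{-t})>0$, whereas $\lambda_p(K)\geq r_{p,K}=1$ when $p$ is inert, so a faithful model must enforce $T\mid\det\overline{M}$; this changes which ensemble one averages over but not the combinatorial shape of the answer.

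The main obstacle is, predictably, the passage from heuristic to theorem. In the number-field setting there is at present no unconditional input that reaches this precision for odd $p$: Smith's work controls the $2$-part of class groups of quadratic fields but not the odd-$p$ part, and even the plain Cohen--Lenstra conjectures for odd $p$ remain open, let alone this Iwasawa-theoretic refinement. The only regime in which the analogue is a theorem is the large-$q$ function-field analogue, via the homological-stability results of Ellenberg--Venkatesh--Westerland and their successors; the most one could realistically do in the number-field case is transcribe that argument, which would not resolve the conjecture.
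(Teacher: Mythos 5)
You rightly treat this as a conjecture, not a theorem, and your derivation of the density $p^{-r}\prod_{t>r}(1-p^{-t})$ from the Cohen--Lenstra measure on finite $\F_p\llbracket T\rrbracket$-modules (via the $q$-binomial identity with $q=p^{-1}$) is the standard and correct heuristic; the paper offers no derivation at all, only a statement and a brief consistency remark, so there is nothing to compare on that front. There is, however, a genuine error in your "caveat," and it would mislead a reader: twice you assert that $r_{p,K}=1$ when $p$ is inert, and hence that Theorem~\ref{order of vanishing} forces $T\mid f_K^-(T)$, so that $\lambda_p(K)\geq 1$ and the $r=0$ term of the conjecture ought to be excised by conditioning the ensemble. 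But $r_{p,K}$ is defined as the number of primes of $K^+=\Q$ lying above $p$ that \emph{split} in $K$, so for $K$ imaginary quadratic one has $r_{p,K}=1$ if $p$ splits and $r_{p,K}=0$ if $p$ is inert or ramified. Theorem~\ref{order of vanishing} is therefore vacuous in the inert case, $\lambda_p(K)=0$ is entirely possible, and in fact it occurs precisely when $p\nmid h_K$; the paper's remark immediately following the conjecture points out that the Cohen--Lenstra probability of $p\nmid h_K$ is $\prod_{t>0}(1-p^{-t})$, which is exactly the $r=0$ term of the Ellenberg--Jain--Venkatesh formula, i.e.\ a consistency check, not the contradiction you describe. (You were likely misled by an internal inconsistency in the paper: the displayed case analysis for $r_{p,K}$ and Corollary~\ref{cor2.6} both have the split and inert cases swapped relative to the stated definition, to the trivial-zero phenomenon, and to the discussion in Section~\ref{s 4}; the definition and Section~\ref{s 4} are the correct ones.) The constrained ensemble you propose, conditioning on $T\mid\det\overline{M}$, is appropriate for the \emph{split} case; applying it to the inert case, which is the only case this conjecture addresses, would give the wrong answer.
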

 We note that $\lambda_p(K)=0$ if and only if $p\nmid h_K$. The probability that $p\nmid h_K$ is, according to the Cohen-Lenstra heuristic, predicted to be equal to 
 \[\prod_{t>0}\left(1-p^{-t}\right).\]

\subsection{Artin representations}
\par Given an Artin representation $\rho$, we are interested in understanding the variation of $\mu$ and $\lambda$-invariants as $p$ ranges over all prime numbers. We specialize our discussion to odd $2$-dimensional Artin representations of dihedral type
\[\rho=\op{Ind}_{\op{G}_L}^{\op{G}_\Q} \zeta,\] where $L$ is an imaginary quadratic field and $\zeta:\op{G}_L\rightarrow \bar{\Q}^\times$ is a character. Let $\zeta'$ be the character defined by $\zeta'(\sigma)=\zeta(c \sigma c^{-1})$, where $c\in \op{G}_\Q$ denotes the complex conjugation. Here, $L/\Q$ is an imaginary quadratic field. Since it is assumed that $\rho$ is of dihedral type, it follows that $\zeta'=\zeta^{-1}$ and the extension $K$ is simply the extension of $L$ that is fixed by the kernel of $\zeta$. Furthermore, assume that $2\nmid [K:L]$. Let $S(\rho)$ be the set of primes $p$ such that 
\begin{enumerate}
    \item $p$ is odd and $p\nmid [K:\Q]$, 
    \item $p$ splits in $L$, $p\cO_L=\pi \pi^*$, and $\pi$ is inert in $K/L$.
\end{enumerate}
For $p\in S(\rho)$, it is then clear from the assumptions that the Assumption \ref{main ass} holds, and that $H^0(\Delta_\p, D/D^{\epsilon_\p})=0$ for any of the primes $\p|p$ of $K$ and the character $\epsilon_\p=\zeta_{|\Delta_\p}$. Let $\epsilon$ be the character associated to this choice of $\p$ and $\epsilon_\p$. 

\begin{definition}
    Let $T(\rho)$ be the set of primes $p\in S(\rho)$ such that the Selmer group $S_{\chi, \epsilon}(\Q_\infty)$ vanishes for all of the primes $\p$ that lie above $p$. Set $T'(\rho):=S(\rho)\backslash T(\rho)$. 
\end{definition}

We note that the Theorem \ref{cor 3.10} implies that for $p\in S(\rho)$ such that $K$ is $p$-rational, then, $p\in T(\rho)$. 

\par Let us recall some heuristics on the $p$-rationality condition. The $\Z_p$-rank of $\cU_p$ is $n=[K:\Q]=r_1+2r_2$. On the other hand, the rank of $U_K$ is $k:=r_1+r_2-1$. The probabilty that a random matrix with $n$ columns and $k$ rows does not have full $\Z_p$-rank is 
\[\op{Pr}_{k,n}:=1-\frac{\prod_{i=0}^{k-1}\left(p^n-p^i\right)}{p^n}\leq \frac{1}{p^{n-k+1}}+\frac{1}{p^{n-k+2}}+\dots+\frac{1}{p^n}.\]
Therefore, the expected number of primes $p$ at which $K$ is not $p$-rational is finite, since according to this heuristic,
\[\sum_{p} \op{Pr}_{k,n}\leq  \sum_{p} \left(\frac{1}{p^{r_2+2}}+\frac{1}{p^{r_2+3}}+\dots+\frac{1}{p^n}\right)=\zeta(r_2+2)+\zeta(r_2+3)+\dots+\zeta(n)<\infty.\]
This is indeed a conjecture due to Gras.

 \begin{conjecture}[Gras]
     Let $K$ be a number field. Then for all large values of $p$, $K$ is $p$-rational.
 \end{conjecture}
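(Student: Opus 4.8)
The final sentence of the excerpt is Gras's conjecture, stated as a \emph{Conjecture}, not as a theorem of the paper. It is an open problem: there is no known unconditional proof that an arbitrary number field $K$ is $p$-rational for all sufficiently large primes $p$. Consequently, I cannot propose a genuine proof of it. What I can do is sketch the heuristic reasoning that makes the conjecture plausible (which is essentially the computation already displayed in the excerpt), indicate the partial results and equivalent reformulations that a serious attack would build on, and identify precisely where the obstruction lies.

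\textbf{The heuristic plan.} The plan is to translate $p$-rationality into a rank condition on a fixed integer matrix and then argue that, as $p$ varies, ``failure of full rank mod $p$'' happens only finitely often on probabilistic grounds. Concretely: write $n=[K:\Q]=r_1+2r_2$ for the $\Z_p$-rank of the group of local principal units $\cU_p$, and $k=r_1+r_2-1$ for the rank of the global unit group $U_K$. When $p$ does not divide $\#\op{Cl}(K)$ and $p$ is large enough that $K(\mu_p)$ contains no $p$-torsion obstruction, $K$ is $p$-rational precisely when the Leopoldt-type map $\lambda_p\colon U_K\otimes\Z_p\to\cU_p$ has full rank $k$ modulo $p$, i.e. when a certain $k\times n$ matrix of $p$-adic logarithms of a fundamental system of units reduces mod $p$ to a rank-$k$ matrix. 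First I would make precise the statement that for all but finitely many $p$ (those not dividing $\#\op{Cl}(K)$, not dividing a discriminant-type quantity, and odd) $p$-rationality is equivalent to this mod-$p$ full-rank condition. Then, \emph{heuristically} treating the reduction mod $p$ of this matrix as a uniformly random $k\times n$ matrix over $\F_p$, the probability of rank $<k$ is $\op{Pr}_{k,n}=1-p^{-n}\prod_{i=0}^{k-1}(p^n-p^i)\le p^{-(n-k+1)}+\dots+p^{-n}$, and since $n-k+1=r_2+2\ge 2$, summing over all primes gives $\sum_p\op{Pr}_{k,n}\le\zeta(r_2+2)+\dots+\zeta(n)<\infty$. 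By the (heuristic) Borel--Cantelli principle, only finitely many $p$ are not $p$-rational. This is exactly the computation reproduced in the excerpt, and it is the whole content of ``why one expects Gras's conjecture''.

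\textbf{Toward a real argument, and the obstacle.} A genuine proof cannot invoke Borel--Cantelli on a deterministic sequence, and this is precisely the main obstacle: the matrix of unit logarithms is \emph{fixed}, not random, so one must prove that its reductions mod $p$ actually equidistribute (or at least avoid low rank) with the expected frequency. This is a deep Diophantine statement. In the direction of lower rank bounds, the nonvanishing $\op{Reg}_p(K)\ne 0$ for a given $p$ is already equivalent to (a piece of) the $p$-adic Leopoldt conjecture, which is itself open in general, so even establishing $p$-rationality for a single ``generic'' $p$ is hard. The honest status: for abelian $K$ over $\Q$, Leopoldt holds (Brumer--Baker), so the rank condition holds for all $p$, and one is reduced to controlling the extra torsion, which can still fail; partial and computational results (Gras--Jaulent, Pitoun--Varescon, and the $S_3$-case result of Maire--Rougnant used later in the paper) confirm the conjecture in many families but no general proof is known. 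Therefore my ``proposal'' for this statement is to present it as the conjecture it is, supported by the rank heuristic above, and to flag that the paper uses it only conditionally (for consequences toward vanishing of Selmer groups for almost all $p$), while obtaining unconditional results only in the $S_3$-case via the external input of Maire--Rougnant. The step I expect to be insurmountable with current technology is exactly the equidistribution-of-reductions claim that would convert the convergent sum $\sum_p\op{Pr}_{k,n}<\infty$ into an actual finiteness theorem.
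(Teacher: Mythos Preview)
Your assessment is correct: the statement is labeled a \emph{Conjecture} in the paper and the paper offers no proof of it, only the random-matrix heuristic you reproduce (the computation of $\op{Pr}_{k,n}$ and the convergent sum $\sum_p \op{Pr}_{k,n}<\infty$) as motivation. There is nothing further to compare.
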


 This leads us to make the following conjecture.

  \begin{conjecture}
     With respect to above notation, the set of primes $T'(\rho)$ is finite. Thus, there are only finitely many pairs $(\p, \epsilon)$ where $\p$ is a prime above $p\in S(\rho)$ and $\epsilon:\op{G}_{L_\p}\rightarrow \bar{\Q}_p^\times$ is a character, such that the associated Selmer group $S_{\chi, \epsilon}(\Q_\infty)\neq 0$. 
 \end{conjecture}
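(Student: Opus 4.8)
The plan is to deduce this from Gras's conjecture together with Theorem \ref{cor 3.10}. Recall first that the class group $\op{Cl}(K)$ is finite, so $p\nmid \#\op{Cl}(K)$ for all but finitely many primes $p$. For $p\in S(\rho)$ the conditions of Assumption \ref{main ass} are satisfied, and $H^0(\Delta_\p, D/D^{\epsilon_\p})=0$ for every prime $\p\mid p$ together with the associated character $\epsilon_\p$, as was observed just before the definition of $T(\rho)$. Granting Gras's conjecture, namely that $K$ is $p$-rational for all sufficiently large primes $p$, Theorem \ref{cor 3.10} then yields $S_{\chi,\epsilon}(\Q_\infty)=0$ for every $\p\mid p$ whenever $p\in S(\rho)$ is large enough and $p\nmid\#\op{Cl}(K)$. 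Consequently $T'(\rho)=S(\rho)\setminus T(\rho)$ is finite, which is the first assertion.

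For the second assertion I would observe that for each fixed $p\in S(\rho)$ there are only finitely many relevant pairs $(\p,\epsilon)$: the prime $\p$ ranges over the finitely many primes of $K$ above $p$, and for each such $\p$ the requirement that $\epsilon$ occur with multiplicity one in $\rho_{|\Delta_\p}$ (as is needed for the Selmer group to be defined, cf.\ Assumption \ref{main ass}) restricts $\epsilon$ to a finite list; in the dihedral situation these are precisely $\zeta_{|\Delta_\p}$ and its conjugate $\zeta'_{|\Delta_\p}$. For every such $\epsilon$ one again has $H^0(\Delta_\p, D/D^{\epsilon_\p})=0$, so the argument of the previous paragraph applies verbatim and shows that the set of $p$ with $S_{\chi,\epsilon}(\Q_\infty)\neq 0$ is finite for each admissible $\epsilon$. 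Summing over the finitely many characters gives the finiteness of the full set of bad pairs.

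The main obstacle is of course that Gras's conjecture is open; unconditionally one can only produce \emph{infinitely many} primes in $T(\rho)$ rather than establish the finiteness of $T'(\rho)$, which is exactly what Theorem \ref{last theorem} achieves for imaginary $S_3$-extensions by invoking the criterion of Maire and Rougnant \cite{maire2020note} to exhibit infinitely many $p$-rational instances. A secondary subtlety, which suggests the present conjecture may in fact be strictly weaker than Gras's, is that by Theorem \ref{conditions GV} the vanishing of $S_{\chi,\epsilon}(\Q_\infty)$ is equivalent to the pair of conditions $H^1_{\op{nr}}(\Q,D)=0$ and $\left(\cU_{p,\cO}^{[\epsilon]}\right)^\chi=\left(\bar{U}_{p,\cO}^{[\epsilon]}\right)^\chi$, the latter involving only the $\chi$-isotypic component of the module governing the failure of $p$-rationality; this can vanish even when $K$ itself is not $p$-rational, so a proof of the conjecture need not pass through full $p$-rationality for large $p$, and one might hope to reach it via a weaker genus-theoretic or density statement about the relevant isotypic components.
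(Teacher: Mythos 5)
The statement you are addressing is a \emph{conjecture}, and the paper does not (and cannot) supply a proof; it only motivates the conjecture by observing that it would follow from Gras's conjecture together with Theorem~\ref{cor 3.10}. Your proposal reconstructs exactly that motivation: for $p\in S(\rho)$ the hypotheses (i) and (ii) of Theorem~\ref{cor 3.10} hold automatically, $p\nmid\#\op{Cl}(K)$ holds for all but finitely many $p$ since $\op{Cl}(K)$ is finite, and Gras's conjecture supplies $p$-rationality for all large $p$; hence $T'(\rho)$ would be finite. This is the same conditional deduction the paper intends, and you correctly flag that the argument is conditional on Gras's conjecture, which is open, and that unconditionally one only gets lower-bound results such as Theorem~\ref{last theorem}. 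Your closing remark — that Theorem~\ref{conditions GV} only requires the vanishing of the $\chi$-isotypic component $\left(\cU_{p,\cO}^{[\epsilon]}\right)^\chi/\left(\bar{U}_{p,\cO}^{[\epsilon]}\right)^\chi$ rather than full $p$-rationality, so the conjecture is a priori weaker than Gras's — is a genuine and worthwhile observation that goes slightly beyond what the paper says explicitly, and is consistent with the refined criterion the authors establish.
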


 There is a relationship between Gras' conjecture and the generalized abc-conjecture, stated below, cf. \cite{vojtaabc}.
 \begin{conjecture}[Generalized abc-conecture]
     Let $K$ be a number field and $I$ be an ideal in $\cO_K$, the radical of $I$ is defined as follows
     \[\op{Rad}(I):=\prod_{\p|p} N(\p),\]where the product is over all prime ideals $\p$ dividing $I$, and $N(\p):=\# \left(\cO_K/\p\cO_K\right)$ is the norm of $\p$. The generalized abc conjecture predicts that for any $\epsilon>0$, there exists a constant $C_{K, \epsilon}>0$ such that 
     \[\prod_v \op{max}\{|a|_v, |b|_v, |c|_v\}\leq C_{K, \epsilon} \left(\op{Rad}(abc)\right)^{1+\epsilon},\]
     holds for all non-zero $a,b,c\in \cO_K$ such that $a+b=c$.
 \end{conjecture}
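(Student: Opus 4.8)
The final statement is the number-field form of the $abc$-conjecture of Masser and Oesterl\'e, one of the central open problems in Diophantine geometry; accordingly, what follows is an outline of the known strategies together with an honest identification of where the difficulty lies, rather than a complete argument. The plan is to recast the inequality inside Vojta's height-theoretic framework. First I would clear common factors, reducing to the case where $a,b,c\in\cO_K$ are pairwise coprime with $a+b=c$; then the left-hand side $\prod_v\max\{|a|_v,|b|_v,|c|_v\}$ is, up to bounded factors, the exponential Weil height $H(P)$ of the point $P=[a:c]\in\mathbb{P}^1(K)$, the point $P$ avoids $\{0,1,\infty\}$, and $\op{Rad}(abc)$ is the truncated counting function $N^{(1)}(D,P)$ for the divisor $D=\{0\}+\{1\}+\{\infty\}$ on $\mathbb{P}^1$. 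In these terms the assertion is exactly Vojta's conjecture for the pair $(\mathbb{P}^1,D)$ with zero contribution from the logarithmic different, namely
\[\log H(P)\le (1+\epsilon)\,N^{(1)}(D,P)+O_{K,\epsilon}(1)\]
for all $P\in\mathbb{P}^1(K)\setminus\{0,1,\infty\}$. This reduction and the equivalence with the displayed inequality are essentially formal manipulations of heights and local terms.

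The substantive step is then to prove this case of Vojta's conjecture, and here there are two lines of attack. The first is the Baker-style method via lower bounds for linear forms in $p$-adic and archimedean logarithms (Stewart--Tijdeman, Yu, Stewart--Yu): this is unconditional, but yields only the exponential bound $\log H(P)\ll_{K,\epsilon}\op{Rad}(abc)^{1/3+\epsilon}$, and the exponent $1/3$ appears intrinsic to the method, so it cannot reach the polynomial bound demanded here. The second is Mochizuki's inter-universal Teichm\"uller theory: attach to $P$ the once-punctured elliptic curve $E_P$ with auxiliary $2$- and $3$-torsion rigidifications, set up the associated Hodge theaters and the $\Theta$-link, establish the multiradial representation of the $\Theta$-pilot object, and extract from the resulting log-volume comparison an inequality specializing to the displayed bound with an explicit $C_{K,\epsilon}$. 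I would pursue this second route, as it is the only proposed path to the full conjecture.

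The hard part is precisely the entire content of that second route: the global arithmetic inequality for $(\mathbb{P}^1,\{0,1,\infty\})$ underlying Vojta's conjecture. This is the point on which the mathematical community has not reached consensus, and no independent proof is known; by contrast every other ingredient above (clearing common factors, translating between $abc$-type inequalities and truncated counting functions, and passing between $\mathbb{P}^1$ and the associated elliptic curve) is routine. Absent a verified proof of that inequality, the statement must remain, as the excerpt presents it, a conjecture, invoked here only to obtain conditional consequences such as Gras' conjecture and the finiteness of $T'(\rho)$.
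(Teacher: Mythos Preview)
The statement is a \emph{conjecture}, not a theorem, and the paper offers no proof of it: it is recorded solely to explain the hypothesis underlying the Maire--Rougnant result (Theorem~\ref{maire rougnant input}) and hence Theorem~\ref{last theorem}. Your proposal correctly recognises this, and your concluding sentence---that the statement must remain a conjecture invoked only for conditional consequences---is exactly the paper's own stance. The preceding material on Vojta's framework, Stewart--Yu bounds, and IUT is accurate background but is neither required nor supplied by the paper; there is nothing to compare because the paper does not attempt a proof.
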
Let us recall a recent result of Maire and Rougnant, cf. \cite[Theorem A]{maire2020note}.

\begin{theorem}[Maire-Rougnant]\label{maire rougnant input}
    Let $K/\Q$ be an imaginary $S_3$ extension. Then, the generalized abc-conjecture for $K$ implies that there is a constant $c>0$ such that
    \[\# \{p\leq x\mid K \text{ is }p\text{-rational}\}\geq c\log x.\]
\end{theorem}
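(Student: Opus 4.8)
This statement is \cite[Theorem A]{maire2020note}, and in the paper it is invoked as a black box; here I sketch the shape a proof takes, since it is not a formal consequence of the preceding material. The argument has two largely independent parts: an algebraic reduction of $p$-rationality of $K$ to a Wieferich-type congruence on a single algebraic number, and a counting step of the kind Silverman used to derive the infinitude of non-Wieferich primes from the abc-conjecture. I would begin from the field-theoretic picture. Writing $\Delta=\op{Gal}(K/\Q)\cong S_3$ and using that $K$ is totally imaginary, complex conjugation $c$ is a transposition in $\Delta$; hence $K$ contains the imaginary quadratic field $k=K^{A_3}$ and a non-Galois cubic field $F=K^{\langle c\rangle}$ of signature $(1,1)$, with $K=kF$. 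Since $K/k$ is cyclic of degree $3$ over the imaginary quadratic field $k$, Leopoldt's conjecture holds for $K$ at every prime (Brumer), so for $p$ unramified in $K$ the $p$-rationality of $K$ (in the sense of \cite{movahhedi1990arithmetique}) amounts to $p\nmid h_K$ together with the non-vanishing modulo $p$ of a suitable $p$-adic regulator of the $\{p,\infty\}$-units of $K$.

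The core of the reduction is to replace this regulator by a congruence on one algebraic number. One has $\op{rank}_{\Z}U_K=2$ and $U_K\otimes\Q$ is the standard $2$-dimensional representation of $S_3$ (the permutation module on the three archimedean places of $K$, with its trivial summand removed), so up to $p$-adic units and class-number factors the $p$-adic regulator of $K$ is a power of the $p$-adic regulator of the cubic field $F$, which has unit rank $1$; and non-divisibility of the latter by $p$ is a Wieferich-type condition on the fundamental unit $\varepsilon$ of $F$, roughly of the shape $\varepsilon^{p-1}\not\equiv 1\pmod{\p^2}$ for the primes $\p\mid p$ of $F$. Carrying this out carefully — pinning down exactly which congruence is needed, handling the several primes above a split $p$ at once (which I expect forces restricting $p$ to a prescribed Chebotarev class, changing only the implied constant), and setting aside the finitely many $p$ dividing the relevant class numbers — is the step I expect to be the main obstacle; the rest is fairly mechanical.

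The counting step is an instance of Silverman's theorem that the abc-conjecture implies $\#\{p\le x:\ \gamma^{p-1}\not\equiv 1\pmod{p^2}\}\gg\log x$ for a fixed algebraic number $\gamma$ of infinite order, applied to $\gamma=\varepsilon$ over the number field $F$. The mechanism is that the terms $\varepsilon^{2^n}-1$ acquire primitive prime divisors $\p_n$ of $F$ (Bang--Zsigmondy, in Schinzel's form over number fields), so that $\op{ord}_{\p_n}(\varepsilon)=2^n$ and the rational primes $p_n$ beneath them are distinct; lifting the exponent shows that failure of the Wieferich condition at $p_n$ forces $\p_n^2\mid\varepsilon^{2^n}-1$; and the generalized abc-conjecture for $K$ (equivalently for $F$) bounds the squarefull parts of $N_{F/\Q}(\varepsilon^{2^n}-1)$, so that the Wieferich condition holds for at least $c\log x$ of the $p_n\le x$. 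A refinement in the style of Graves and Murty — working with $\varepsilon^{2^nQ}-1$ for a fixed auxiliary modulus $Q$ — arranges for these $p_n$ to lie in whichever Chebotarev class the reduction above demands. Combining the two parts gives $\#\{p\le x:\ K\text{ is }p\text{-rational}\}\ge c\log x$.

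Once this is in hand, Theorem \ref{cor 3.10} shows that every such $p$ lying in $S(\rho)$ belongs to $T(\rho)$, so playing this lower bound against a Chebotarev count of $S(\rho)\cap[1,x]$ yields Theorem \ref{last theorem}.
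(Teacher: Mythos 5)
The paper itself offers no proof of this statement: it is imported verbatim as Theorem~A of \cite{maire2020note} and used as a black box in the proof of Theorem~\ref{last theorem}. You correctly identify this, so there is nothing to reconcile against a proof in the text; what you have written is a sketch of the argument that lives in Maire--Rougnant's paper, and it is worth assessing on those terms.

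Your sketch captures the two-part shape of their argument accurately. The algebraic reduction is as you describe: since $\op{Gal}(K/\Q)\cong S_3$ has trivial center, complex conjugation cannot be central, so $K$ is not CM; $c$ is therefore a transposition, $K$ contains the imaginary quadratic field $k=K^{A_3}$, and $F=K^{\langle c\rangle}$ is a non-Galois cubic field of signature $(1,1)$ with unit rank $1$. Because $K/k$ is abelian over an imaginary quadratic base, Leopoldt holds for $K$ unconditionally by Brumer, so $p$-rationality reduces (away from finitely many $p$) to a non-vanishing modulo $p$ of the relevant regulator, which on the $2$-dimensional isotypic part of $U_K\otimes\Q$ comes down to a Fermat-quotient (Wieferich-type) condition on the fundamental unit $\varepsilon$ of $F$. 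The counting step is indeed Silverman's mechanism: primitive divisors for $\varepsilon^{2^n}-1$ (Bang--Zsigmondy in the number-field form), lifting the exponent to detect a $\p^2$, and the generalized $abc$-conjecture over $K$ to bound the powerful part of the norms, giving $\gg\log x$ primes satisfying the Wieferich condition; the Graves--Murty refinement to impose Chebotarev constraints is the right tool if one wants the final primes to land in a prescribed splitting type. The one place where I would caution you is the middle step: in general ``$K$ is $p$-rational'' is not literally equivalent to ``$F$ is $p$-rational'' or to a single Fermat quotient of $\varepsilon$; one must control the $\chi$-parts of $\mathcal{T}_p(K)$ for all irreducible characters $\chi$ of $S_3$, and the contribution of the quadratic subfield $k$ (e.g.\ $p\nmid h_k$ and, when $p$ splits in $k$, a Gold-type condition) enters as well. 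Maire--Rougnant handle this bookkeeping; your sketch, with its parenthetical ``handling the several primes above a split $p$ at once,'' gestures at it without resolving it, which is fine for a sketch but is genuinely where the work lies. As a description of the cited proof and of how it feeds into Theorem~\ref{last theorem}, your account is correct and appropriately scoped.
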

 The above result has implication to the vanishing of Iwasawa modules (and invariants). 

\begin{theorem}\label{last theorem}
    Let $K/\Q$ be an imaginary $S_3$ extension and $\rho$ be a $2$-dimensional Artin representation that factors through $\op{Gal}(K/\Q)$. Then,
    \[\# \{p\leq x\mid p\in T(\rho)\}\geq c\log x.\]
\end{theorem}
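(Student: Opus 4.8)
The plan is to combine the Maire--Rougnant estimate (Theorem~\ref{maire rougnant input}) with our criterion for the vanishing of the Selmer group (Theorem~\ref{cor 3.10}), after first arranging that the representation $\rho$ falls into the dihedral framework set up above. Since $K/\Q$ is an imaginary $S_3$-extension and $\rho$ is a $2$-dimensional Artin representation factoring through $\op{Gal}(K/\Q)\simeq S_3$, the representation $\rho$ is necessarily the (unique, up to twist) irreducible $2$-dimensional representation of $S_3$; this is induced from a nontrivial character $\zeta$ of the index-$2$ (cyclic of order $3$) subgroup, which corresponds to an imaginary quadratic subfield $L\subset K$, so $\rho=\op{Ind}_{\op{G}_L}^{\op{G}_\Q}\zeta$ with $\zeta'=\zeta^{-1}$ of dihedral type, and $[K:L]=3$ is odd. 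Thus the hypotheses preceding Theorem~\ref{thm 4.12} are met.

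Next I would show that for all sufficiently large primes $p$ in a positive-proportion (more precisely, Chebotarev-positive-density, hence of the same order $c\log x$ up to constants) set, $p$ lies in $S(\rho)$. The condition ``$p$ splits in $L$ and the chosen prime $\pi\mid p$ of $L$ is inert in $K/L$'' is a Chebotarev condition on $\op{Gal}(K/\Q)=S_3$: it asks that $\op{Frob}_p$ lie in the conjugacy class of a $3$-cycle (equivalently, that $p$ be unramified with residue degree data making $p$ split in $L$ and $\pi$ inert upstairs). This class has density $1/3$ in $S_3$. The key point is that \emph{within} this positive-density set of primes, the Maire--Rougnant lower bound $\#\{p\le x : K \text{ is } p\text{-rational}\}\ge c\log x$ still produces $\gg \log x$ primes that are simultaneously in $S(\rho)$ and $p$-rational; this requires either that the Maire--Rougnant argument already restricts to primes in a prescribed Chebotarev class (which, inspecting their construction, it does, since their primes are built from an explicit family and splitting conditions can be imposed), or a short argument that the intersection of their set with a Chebotarev set of positive density retains the $\log x$ lower bound. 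For each such $p$, Theorem~\ref{cor 3.10} (whose hypotheses are: Assumption~\ref{main ass}, which holds for $p\in S(\rho)$ as noted after the definition of $S(\rho)$; $H^0(\Delta_\p, D/D^{\epsilon_\p})=0$, which also holds for $p\in S(\rho)$; $K$ is $p$-rational; and $p\nmid\#\op{Cl}(K)$) gives $S_{\chi,\epsilon}(\Q_\infty)=0$ for every prime $\p\mid p$ of $K$, so $p\in T(\rho)$.

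The remaining input is the condition $p\nmid\#\op{Cl}(K)$: this excludes only finitely many primes, so it does not affect the asymptotic lower bound. Putting these together yields $\#\{p\le x : p\in T(\rho)\}\ge c'\log x$ for a possibly smaller constant $c'>0$, which is the claim.

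\textbf{Main obstacle.} I expect the crux to be the compatibility between the Chebotarev splitting conditions defining $S(\rho)$ and the set of $p$-rational primes produced by Maire--Rougnant: one must verify that their $\log x$ many $p$-rational primes can be taken to also satisfy ``$p$ splits in $L$, $\pi$ inert in $K/L$'' (and the mild coprimality conditions), rather than possibly all lying in a disjoint splitting type. This is a matter of tracking the splitting conditions through the abc-based construction in \cite{maire2020note}, and is the step where the proof is least purely formal.
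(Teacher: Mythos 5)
Your proposal follows the same basic route as the paper: combine Theorem~\ref{cor 3.10} (vanishing of $S_{\chi,\epsilon}(\Q_\infty)$ when $p\in S(\rho)$, $K$ is $p$-rational, and $p\nmid\#\op{Cl}(K)$) with the Maire--Rougnant lower bound of Theorem~\ref{maire rougnant input}. The paper's own proof is exactly two sentences and simply asserts ``the result thus follows,'' whereas you spell out the reduction of the $S_3$ case to the dihedral framework (identifying $L$ as the imaginary quadratic subfield fixed by $A_3$ and $\zeta$ as the cubic character cutting out $K$), correctly verify that the $S(\rho)$ condition is a Chebotarev condition of density $1/3$, and note that $p\nmid\#\op{Cl}(K)$ excludes only finitely many primes.

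More importantly, the obstacle you flag is real and is not addressed by the paper. Theorem~\ref{maire rougnant input} produces $\gg\log x$ primes at which $K$ is $p$-rational, but gives no control over whether those primes satisfy the splitting conditions defining $S(\rho)$ (namely, that $p$ splits in $L$ with $\pi$ inert in $K/L$, so that Assumption~\ref{main ass} and $H^0(\Delta_\p,D/D^{\epsilon_\p})=0$ hold). Since Theorem~\ref{cor 3.10} only applies to primes in $S(\rho)$, one cannot conclude $p\in T(\rho)$ for an arbitrary $p$-rational prime; a priori the Maire--Rougnant primes could all have the wrong Frobenius type. Either one must check, as you suggest, that the abc-based construction in \cite{maire2020note} can be run within a prescribed conjugacy class, or supply an additional argument that the intersection with the density-$1/3$ Chebotarev set retains the $\gg\log x$ lower bound. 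The paper leaves this step implicit, and your identification of it as the crux is the main substantive contribution of your write-up beyond what the paper records.
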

\begin{proof}
    Theorem \ref{cor 3.10} implies that for $p\in S(\rho)$ such that $K$ is $p$-rational, then, $p\in T(\rho)$. The result thus follows.
\end{proof}

\bibliographystyle{alpha}
\bibliography{references}
\end{document}